\newtheorem{theorem}{Theorem}[section]
\newtheorem{proposition}{Proposition}[section]
\newtheorem{assumption}{Assumption}
\newtheorem{lemma}[theorem]{Lemma}
\theoremstyle{remark}
\newtheorem{definition}[theorem]{Definition}
\newtheorem*{property1}{Property 1}
\newtheorem*{property2}{Property 2}
\newcommand{\Prob}[1]{\mathbb{P}\left[#1\right]}
\newcommand{\Exp}[1]{\mathbb{E}\left[#1\right]}
\newcommand{\Var}[1]{\mathbb{V}\left[#1\right]}
\newcommand{\dd}{\mathrm{d}}
\begin{document}

\begin{frontmatter}
\title{Global solutions with infinitely many blowups in a mean-field neural network}
\runtitle{Infinite number of blowups in mean-field}

\begin{aug}

\author[A]{\fnms{Lorenzo} \snm{Sadun}\ead[label=e2]{sadun@math.utexas.edu}}
\and
\author[A,B]{\fnms{Thibaud} \snm{Taillefumier}\ead[label=e1]{ttaillef@austin.utexas.edu}}
\address[A]{Department of Mathematics,
University of Texas, Austin,
}

\address[B]{Department of Neuroscience,
University of Texas, Austin,
}
\end{aug}

\begin{abstract}
We recently introduced idealized mean-field models for networks of integrate-and-fire neurons with impulse-like interactions---the so-called delayed Poissonian mean-field models.
Such models are prone to blowups: for a strong enough interaction coupling, the mean-field rate of interaction diverges in finite time with a finite fraction of neurons spiking simultaneously.
Due to the reset mechanism of integrate-and-fire neurons, these blowups can happen repeatedly, at least in principle.
A benefit of considering Poissonian mean-field models is that one can resolve blowups analytically by mapping the original singular dynamics onto uniformly regular dynamics via a time change.
Resolving a blowup then amounts to solving the fixed-point problem that implicitly defines the time change, which can be done consistently for a single blowup and for nonzero delays. 
Here we extend this time-change analysis in two ways: 
First, we exhibit the existence and uniqueness of explosive solutions with a countable infinity of blowups in the large interaction regime.
Second, we show that these delayed solutions specify ``physical'' explosive solutions in the limit of vanishing delays, which in turn can be explicitly constructed.
The first result relies on the fact that blowups are self-sustaining but nonoverlapping in the time-changed picture.
The second result follows from the continuity of blowups in the time-changed picture and incidentally implies the existence of periodic solutions.
These results are useful to study the emergence of synchrony in neural network models.
\end{abstract}

\begin{keyword}[class=MSC2020]
\kwd[Primary ]{60G99}
\kwd{60K15, 35Q92, 35D30, 35K67, 45H99}
\end{keyword}

\begin{keyword}
\kwd{mean-field neural network models; blowups in parabolic partial differential equation; regularization by time change;  singular interactions; delayed integral equations; inhomogeneous renewal processes}
\kwd{second keyword}
\end{keyword}

\end{frontmatter}


\section{Introduction}


\subsection{Background} 

In this work, we consider idealized neural-network models called delayed Poissonian mean-field (dPMF) models introduced in \cite{TTPW}.
These dPMF models are variations of classical mean-field models \cite{Brunel:1999aa,Brunel:2000aa,Caceres:2011}, whose dynamics are also prone to blowups.
From a modeling perspective, blowups correspond to the occurrence of synchronous, macroscopic spiking events in a neural network.
Understanding the emergence of these synchronous events is of interest for studying the maintenance of precise temporal information in neural networks \cite{Panzeri:2010,Kasabov:2010,Brette:2015}.
However, blowups resist direct analytical treatment in classical mean-field models \cite{Delarue:2015,Delarue:2015b,Hambly:2019,Nadtochiy:2019,Nadtochiy:2020}.
This observation is the primary motivation justifying the introduction of dPMF dynamics, whose blowups prove analytically tractable.

In \cite{TTPW}, we conjectured dPMF dynamics as the mean-field limit of finite-size particle systems with singular interactions (see Fig. \ref{fig:FiniteNetwork}).
In a finite-size system of $N$ particles, each particle $i$, $1\leq i \leq N$, represents a neuron with time-dependent state variable  $X_{N,i,t}$.
These  state variables evolve jointly according to a $\mathbbm{R}^N$-valued continuous-time process $t \mapsto \lbrace X_{N,i,t} \rbrace_{1 \leq i \leq N}$.
Specifically, the network dynamics is parametrized by the drift value $\nu>0$, the refractory period $\epsilon>0$, and the interaction parameter $\lambda>0$ as follows:
$(i)$ Whenever a process $X_{N,i,t}$ hits the spiking boundary at zero, it instantaneously enters its inactive refractory state.
$(ii)$ At the same time, all the other active processes $X_{N,j,t}$ (which are not in the inactive refractory state) are respectively updated by amounts $-w_{N,ij,t}$, where $w_{N,ij,t}$ is independently drawn from a normal law with mean and variance equal to $\lambda/N$.
$(iii)$ After an inactive (refractory) period of duration $\epsilon>0$, the process $X_{N,i,t}$ restarts its autonomous stochastic dynamics from the reset state $\Lambda>0$.
$(iv)$ In between spiking/interaction times, the autonomous dynamics of active processes follow independent drifted Wiener processes with negative drift $-\nu$.
Correspondingly, an initial condition for the network is specified by the starting values of the active processes, i.e., $X_{N,i,0}>0$ if $i$ is active, and the last inactivation time of the inactive processes,  i.e., $-\epsilon  \leq \rho_{i,0} \leq 0$, if $i$ is inactive.
Following the above definition, the finite-size versions of dPMF dynamics exhibit two key features (see Fig. \ref{fig:FiniteNetwork}):
$(1)$ recurrent interactions implement self-excitation whose strength is quantified by the interaction parameter $\lambda$; $(2)$ owing to the post-spiking refractory period $\epsilon>0$, individual neuronal processes follow delayed dynamics.
Parenthetically, the inclusion of a negative drift $\nu$ is necessary to ensure that neurons will spike in finite time, independent of the initial conditions and of the coupling strength.

By analogy with  \cite{Caceres:2011,Carrillo:2013}, dPMF dynamics are deduced from finite-size ones by conjecturing propagation of chaos in the infinite-size limit $N \to \infty$, which is supported by numerical simulations (see Fig. \ref{fig:sim}).
The propagation of chaos states that for exchangeable initial conditions, the processes $X_{N,i,t}$, $1 \leq i \leq N$, become i.i.d. in the limit of infinite-size networks $N \to \infty$, so that each individual process follows a mean-field dynamics \cite{Sznitman:1989}.
For exchangeable initial conditions, a representative dPMF process $X_t=\lim_{N \to \infty} X_{N,i,t}$ is governed by the deterministic cumulative drift $\Phi$ that amalgamates the contribution of the autonomous drift $-\nu$ and the mean-field contribution of neuronal interactions via  $\lambda$.
Specifically, we have $\Phi(t) = \nu t + \lambda \Exp{ M_t}$, where the process $M_t$ counts the successive first-passage times of the representative process $X_t$ to the zero spiking threshold.
Such a process is defined as
\begin{eqnarray}\label{eq:MFM}
M_t = \sum_{n>0} \mathbbm{1}_{[0,t]}(\rho_n) \, ,\quad \mathrm{with} \quad \rho_{n+1} = \inf \left\{  t>r_n=\rho_{n-1}+\epsilon \, \big  \vert \,  X_{t}  \leq 0 \right\}  \, ,
\end{eqnarray}
with initial conditions bearing on $X_0$ or $\rho_0$ depending on the representative process being initially active or inactive.
We will later specify such initial conditions in detail.
Note that in the above definition,  the first-passage times $\rho_n$, $n \geq 1$,  denote successive inactivation times, whereas $r_n=\rho_n+\epsilon$, $n >0$, denotes the corresponding sequence of reset times.
For mediating all interactions, the function $t \mapsto \Exp{M_t}$ plays a central role in our analysis. 
We denote this function by $F$ and observe that $F$ is defined as an increasing \emph{c\`adl\`ag} function.
Remember that a function is said to be \emph{c\`adl\`ag} if it is right-continuous with left limits.
Then, the hallmark of dPMF dynamics is that the drift $\Phi$ impacts the representative neuron stochastically via a process $Z_t = \Phi(t)+W_{\Phi(t)}$, where $W$ is a canonical, driving Wiener process.
Accordingly, dPMF dynamics are referred to as Poissonian because the process $Z_t$ represents the diffusive limit of a Poisson counting process with $\Exp{Z_t}=\Var{Z_t}$ for all $t \geq 0$.
With these conventions, the stochastic dPMF dynamics of a representative process is given by
\begin{eqnarray}\label{eq:stochEqMF}
X_t = X_0 - \int_0^t \mathbbm{1}_{ \{ X_{s^-} >0 \} } \dd Z_s + \Lambda M_{t-\epsilon} \, .
\end{eqnarray}
The above equation fully defines dPMF dynamics:
The integral term indicates that when the neuron is active, for $X_t>0$, its state evolves according to the Poissonian diffusion process $Z_t$.
The delayed term indicates that after hitting the spiking boundary at zero at time $\tau$, the process remains inactive until it resets at $\Lambda$ after a duration $\epsilon$: $X_t=0$ for $\tau \leq t < \tau+\epsilon$.

Because of their self-exciting nature, dPMF dynamics are prone to blowups/synchronous events for large enough interaction parameter and/or for initial conditions that are concentrated near the zero spiking boundary.
Blowups occur at those times when $f$, the instantaneous spiking rate of a representative neuron, diverges.
Formally, the firing rate $f$ is defined in the distribution sense as the Radon-Nikodym derivative of $F$ with respect to the Lebesgue measure: $f = \dd F /\dd t$.
Then, assuming $f$ to be finite in the left vicinity of $T_1$,  $T_1$ is a blowup time if $\lim_{t \to T_1^-} f(t)=\infty$.
In turn, synchronous events occur at those times $T_1$ for which a finite fraction of processes spikes simultaneously or equivalently, for which a representative neuron spikes with nonzero probability: $\pi_1= \Prob{X_{T_1}=0}>0$.
Formally, this corresponds to $F$ admitting a jump discontinuity at $T_1$ so that $F(T_1)-F(T_1^-)=\lambda \pi_1$.

\begin{figure}[htbp]
\begin{center}
\includegraphics[width=\textwidth]{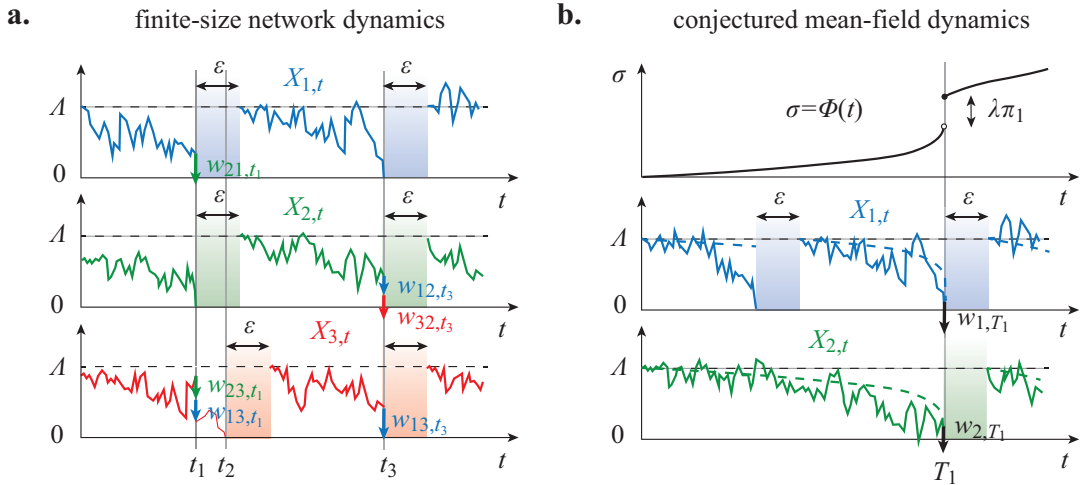}
\caption{
{\bf Delayed Poissonian network dynamics.}
{\bf a.} Schematic representation of the delayed Poissonian network dynamics in a finite-size particel system with $N=3$ interacting processes.
In between spiking events, neuronal dynamics are that of independent Wiener processes with negative drift $-\nu<0$.
Neuron $2$ spikes and inactivates at time $t_1$, which leads to updating neuron $1$ and $3$ via weights $w_{21,t_1}$ and $w_{23,t_1}$ i.i.d. with normal law $\mathcal{N}(\lambda/N,\lambda/N)$.
This causes neuron $1$ to spike and inactivate, leading to another instantaneous update of neuron $3$ via a random weight $w_{13,t_1}$.
Then, neuron $3$ spikes and inactivates at a later time $t_2$, but this has no impact on neurons $1$ and $2$ as they remain inactive during a refractory period of duration $\epsilon>t_2-t_1$.
At the end of their refractory period, neurons reset at $\Lambda>0$.
Finally, the spontaneous spiking of neuron $1$ in $t_3$ cause both neurons $2$ and $3$ to spike at the same time.
{\bf b.} Schematic representation of the conjectured delayed Poissonian mean-field (dPMF) dynamics in the limit of infinite-size networks $N \to \infty$.
When $N \to \infty$, it is conjectured that the neuronal dynamics become independent and that interactions are mediated by the mean-field deterministic cumulative drift $\Phi(t)=\nu t + \lambda F(t)$, where the increasing function $F$ denotes the cumulative spiking or inactivation rate.
Specifically, it is conjectured that neuronal dynamics follow independent time-changed Wiener processes with Poissonian attributes parametrized by $\Phi$.
For time $t<T_1$, the (Radon-Nikodym) derivative of $\Phi$ remains finite and the spiking of representative processes are determined via first-passage time problems with regular boundary.
Accordingly, two representative processes have zero probability to spike synchronously.
However, for large enough interaction parameter $\lambda$, it is possible that the (Radon-Nikodym) derivative of $\Phi$ locally diverges and that a finite fraction $\pi_1$, $0<\pi_1<1$, of the processes synchronously spike as depicted in $T_1$.
We refer to such a possibility as a full-blowup event.
This corresponds to a jump discontinuity of size $\lambda \pi_1$ in $\Phi$ and to singular stochastic updates of the representative processes with weight $w_{1,T_1}$ and $w_{2,T_1}$ i.i.d. normal law $\mathcal{N}(\lambda \pi_1,\lambda \pi_1)$.
}
\label{fig:FiniteNetwork}
\end{center}
\end{figure}

The main interest of considering dPMF dynamics is that blowups and synchronous events are analytically tractable under some reasonable restrictions about the initial conditions.
Specifically, denoting by $\mathcal{M}(I)$ the set of positive measure on an interval $I \subset \mathbbm{R}$, we assume as in \cite{TTPW} that:

\begin{assumption}\label{assump1}
The initial conditions for dPMF dynamics is specified by 
\begin{eqnarray}
\Prob{X_0 \in \dd x \vert X_0 >0}= p_0(\dd x) \quad \mathrm{and} \quad \Prob{ \rho_0 \in \dd t \vert X_0 = 0} = f_0(\dd t) \, ,  \nonumber
\end{eqnarray}
where  $(p_0,f_0)$ in $ \mathcal{M}((0,\infty))\times \mathcal{M}([-\epsilon,0)) $ is normalized, i.e., $\Vert p_0 \Vert_1 +\Vert f_0 \Vert_1=1$ and such that $p_0$ is locally differentiable in zero with $\lim_{x \to 0^+} p_0(x)=0$ and $\lim_{x \to 0^+} \partial_x p_0(x)/2 < 1/\lambda$.
\end{assumption}

The above initial conditions guarantee that there exists a dPMF dynamics locally solving \eqref{eq:stochEqMF} and that this dynamics is initially smooth in the sense that $F$ is an infinitely differentiable function in the right vicinity of zero.
Such a smooth solution can be maximally continued on a possibly infinite interval $[0,T_1)$, where $T_1$ marks the occurrence of the first blowup.
In \cite{TTPW}, we show that for all $0<t<T_1$, the density $x \mapsto p(x,t)= \Prob{X_t \in \dd x \, \vert X_t > 0}/\dd x$ is smooth in the right vicinity of zero with
$\lim_{x \to 0^+} p(x)=0$ and $\lim_{x \to 0^+} \partial_x p(t,x)/2 < 1/\lambda$.
As the later limits are always well defined and finite, we will simply refer to their values as $p(0)$ and $\partial_x p(t,0)/2$ for conciseness.
With this in mind, we show in \cite{TTPW} that for all $0<t<T_1$, the instantaneous spiking rate is given by
\begin{eqnarray}
f(t) = \frac{ \nu \partial_x p(0,t)}{2  - \lambda \partial_x p(0,t)}\,.  \nonumber
\end{eqnarray}
This leads to introducing the following criterion for first-blowup times:

\begin{definition}\label{def:blowup_int}
Under Assumption \ref{assump1}, the first blowup time is defined as
\begin{eqnarray}  \nonumber
T_1 = \sup \{  t> 0 \, \vert \, \partial_x p(t,0)  < 2/\lambda \} \, .
\end{eqnarray}
\end{definition}

The above criterion only bears on the local divergence of  the firing rate $f$  and is silent about the possible occurrence of a synchronous event.
To further characterize blowups, we introduce in  \cite{TTPW} the so-called full-blowup criterion:

\begin{assumption}\label{assump2}
At the blowup time $T_1$,  the density function $x \mapsto p(x,t)$ satisfies
\begin{eqnarray}  \nonumber
\lim_{t \to T_1^-} \partial^3_x p (t,0) > 8/\lambda \, .
\end{eqnarray}
\end{assumption}

The full-blowup criterion, which generically holds with respect to the choice of initial conditions, allowed us to characterize blowups in dPMF dynamics as follows:

\begin{proposition}\label{def:blowup_int}
Under Assumptions \ref{assump1} and \ref{assump2}, the instantaneous rate  blows up as $f(t) \sim 1/\sqrt{T_1-t}$ when $t \to T_1^-$ and triggers a synchronous event in $T_1$ of size 
\begin{eqnarray}\label{eq:selfProb}
\pi_1 = \inf \left\{ p>0 \,  \vert \, p>\Prob{\tau_{X_{T_1}} < \lambda p } \right\} > 0 \, ,
\end{eqnarray}
where $\tau_{X_{T_1}}$ denotes the first-passage time in zero of a Wiener process with unit negative drift and started with random initial condition $X_{T_1^-}$.
\end{proposition}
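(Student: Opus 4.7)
My approach splits into two parts matching the two claims: first, I derive the blow-up rate $f\sim (T_1-t)^{-1/2}$ from a local ODE for the deficit $u(t):=2-\lambda\partial_x p(0,t)$; second, I characterize $\pi_1$ through a time-change reduction to a self-consistent first-passage problem, using Assumption~\ref{assump2} to rule out $\pi_1=0$.

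For the first part, I would exploit the fact that on $[0,T_1)$ the conditional density $p(x,t)$ is classical and solves the Fokker--Planck equation $\partial_t p = \Phi'(t)\bigl(\partial_x p + \tfrac{1}{2}\partial_x^2 p\bigr)$ with $\Phi'(t)=\nu+\lambda f(t)$ and absorbing boundary $p(0,t)=0$. Evaluating the PDE at $x=0$ gives the algebraic identity $\partial_x^2 p(0,t)=-2\partial_x p(0,t)$, and differentiating once in $x$ before evaluating at $x=0$ produces an ODE for $\partial_x p(0,t)$ involving only $\partial_x p(0,t)$ and $\partial_x^3 p(0,t)$. Substituting $\Phi'(t)=2\nu/u(t)$, which follows directly from the explicit formula for $f(t)$, the ODE collapses to
\begin{equation*}
u(t)\,u'(t) \;=\; 4\nu\bigl(2-u(t)\bigr) \,-\, \lambda\nu\,\partial_x^3 p(0,t).
\end{equation*}
By Assumption~\ref{assump2}, the right-hand side converges as $t\to T_1^-$ to a strictly negative constant $c<0$, so $(u^2/2)'\to c$ and $u(t)\sim \sqrt{2|c|(T_1-t)}$. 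Substituting back into $f(t)=\nu(2-u)/(\lambda u)\sim 2\nu/(\lambda u)$ produces $f(t)\sim \mathrm{const}\cdot(T_1-t)^{-1/2}$.

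For the second part, I would pass to the time-change picture in which active neurons evolve as independent Wiener processes with unit negative drift absorbed at $0$. In that picture, a jump of $F$ of size $\pi$ at $T_1$ corresponds to advancing the intrinsic clock by $\lambda\pi$ at a single real-time instant. A neuron in state $X_{T_1^-}$ contributes to the synchronous event if and only if its intrinsic first-passage time $\tau_{X_{T_1^-}}$ is below $\lambda\pi$. Averaging over the law of $X_{T_1^-}$ yields the self-consistency $\pi = \Prob{\tau_{X_{T_1^-}} < \lambda\pi}$, and physical selection forces $\pi_1$ to be the smallest positive solution compatible with the dynamics --- precisely the infimum in the statement.

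To prove $\pi_1>0$, I would perform a local analysis of $g(\pi) := \Prob{\tau_{X_{T_1^-}} < \lambda\pi}$ near $\pi=0$. Using $p(x,T_1^-)\sim \partial_x p(0,T_1^-)\,x$ for $x$ small together with the standard short-time expansion of the Wiener first-passage distribution, one finds $g'(0^+) = \lambda\partial_x p(0,T_1^-)/2 = 1$, so $g$ is tangent to the identity at $0$. Assumption~\ref{assump2}, which controls the next-order behaviour of $p(\cdot,T_1^-)$ at $x=0$, then forces $g(\pi)-\pi>0$ for $\pi>0$ small, pushing the infimum strictly away from $0$. The hardest step will be the rigorous passage to the time-change picture across $T_1$ and the justification that the \emph{smallest} consistent jump is the physically selected one; this step is delicate and leans on the time-change regularization machinery developed in \cite{TTPW}.
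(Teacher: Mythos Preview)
Your proposal is correct. Note, however, that this proposition is not proved in the present paper: it is recalled from \cite{TTPW}, and the surrounding text (Section~1.3) only sketches the time-change machinery that underlies it. So there is no in-text proof to compare against line by line.

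That said, your approach lines up well with the framework the paper describes. For the jump size, your argument is essentially the paper's own: the time-change reduces the blowup to an absorbed unit-drift Wiener evolution over an interval of length $\lambda\pi$, the self-consistency $\pi=\Prob{\tau_{X_{T_1^-}}<\lambda\pi}$ is exactly the exit-time characterization given after Definition~\ref{def:exitTime_intro}, and your local analysis near $\pi=0$ (tangency $g'(0^+)=1$ plus Assumption~\ref{assump2} forcing $g(\pi)>\pi$ for small $\pi$) is the original-coordinate translation of Assumption~\ref{assump3}, which the paper states is equivalent. The caveat you flag about physical selection of the \emph{smallest} root is precisely what the running-supremum formulation \eqref{eq:fixedPoint_int} is designed to enforce.

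For the blow-up rate, your direct ODE for $u=2-\lambda\partial_x p(0,t)$ is a clean original-coordinate argument. The paper's route (via \cite{TTPW}) is the time-change dual: there $g(\sigma)$ is smooth and crosses $1/\lambda$ transversally at $S_1$ by Assumption~\ref{assump3}, so $\Psi'(\sigma)=(1-\lambda g(\sigma))/\nu$ vanishes linearly, giving $T_1-t\sim c(S_1-\sigma)^2$ and hence $f(t)=g(\sigma)\Phi'(t)\sim\mathrm{const}/\sqrt{T_1-t}$. The two derivations are equivalent through the identity $\Phi'=2\nu/u$; yours has the virtue of staying in the original coordinates and not requiring the time-change to be set up first, while the paper's buys uniform smoothness of $g$ and makes the equivalence of Assumptions~\ref{assump2} and~\ref{assump3} transparent.
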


Thus, for generic initial conditions, blowups correspond to left H\"older singularity of exponent $1/2$ in the cumulative function $F$, followed by a jump discontinuity whose size $\lambda \pi_1$ can be determined as the solution of the self-consistent problem \eqref{eq:selfProb}. 
In \cite{TTPW}, we show that the latter problem directly follows from the requirement of conservation of probability during blowups.

\begin{figure}[htbp]
\begin{center}
\includegraphics[width=0.8\textwidth]{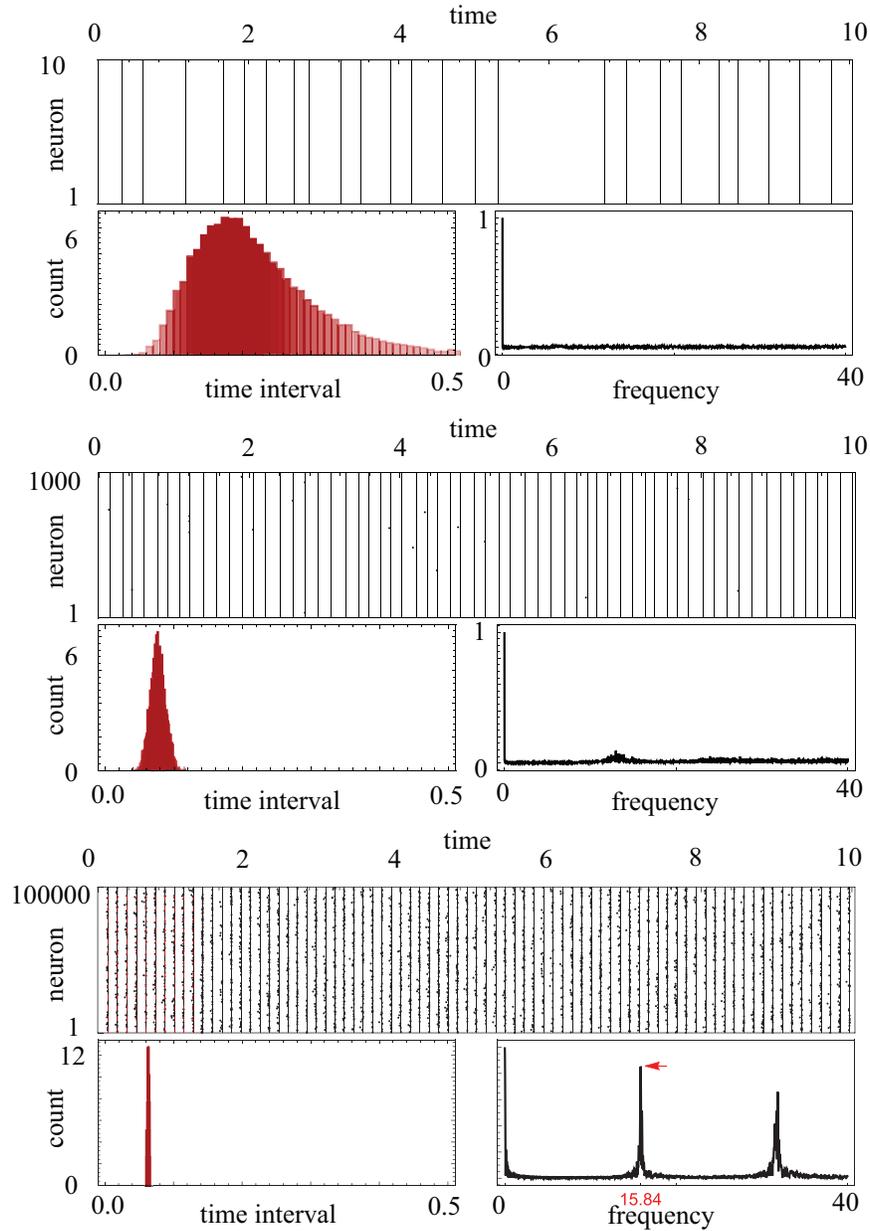}
\caption{{\bf Simulations of finite-size interacting particle systems.}
We simulate a finite network of neurons which follows the interacting dynamics depicted in Fig. \ref{fig:FiniteNetwork} and for varying size $N=10, 100, 10000$.
Parameters: $\Lambda=1$, $\nu=1$, $\lambda=20$ and $\epsilon=0$. Initial conditions: all neurons start at reset value.
In each panel: $(1)$ the top graphics represents a raster plot of the neuronal activity, where the spiking of neuron $i$ at time $t$ is marked by a point at coordinate $(i,t)$. 
Synchronous event corresponds to vertically aligned points which are apparent for all conditions.
$(2)$ the bottom left graphics represents the histogram of time interval between synchronous events.
$(3)$ the bottom right graphics represents the spectral density of the overall network activity.
Making the approximation that the full blowup has size $\pi_1=1$ and neglecting reset besides at $S_1$ yields the mean-field predictions: $1/T_1\simeq 16.18$ with $T_1=(S_1-\lambda H(S_1,\Lambda))/\nu \simeq 0.07143$, where $S_1$ is the smallest solution of $h(S_1,\Lambda)=1/\lambda$. 
The level of synchrony increases with the size of the system $N$, supporting that the particle system admits a (periodic) mean-field dynamics in the limit $N \to \infty$.
}
\label{fig:sim}
\end{center}
\end{figure}


\subsection{Motivation}

Unfortunately, the blowup analysis of \cite{TTPW} is only local in the sense that it allows one to resolve a single blowup for generic initial conditions.
However, this local analysis provides one with natural blowup exit conditions.
In principle, these blowup exit conditions can also serve as initial conditions to analyze the next blowup, if any.
Numerical simulations suggest that for large enough interaction parameters $\lambda > \Lambda$, explosive solutions exhibit repeated blowup episodes at regular time intervals (see Fig. \ref{fig:sim}).
Following on this observation, the first goal of this work is to extend the analysis of \cite{TTPW} to show the existence of global dPMF solutions that are defined over the entire half-line $\mathbbm{R}^+$ and that exhibit a countable infinity of blowups.
This program involves proving that for large enough interaction parameters, iteratively applying the blowup analysis of \cite{TTPW} produces synchronous events with sizes that remain bounded away from zero, at time intervals that also remain bounded away from zero.
Moreover, the blowup analysis of \cite{TTPW} is only concerned with dPMF dynamics for positive refractory period $\epsilon>0$. 
With positive refractory period $\epsilon>0$, explosive dPMF dynamics are always well posed but at the analytical cost of being determined as delayed dynamics.
For small enough $\epsilon>0$, we do not expect the refractory period to impact dPMF dynamics, except for ensuring their well-posedness in the presence of blowups.
This suggests defining explosive Poissonian mean-field (PMF) dynamics in the absence of a refractory period as the limit dPMF dynamics obtained when $\epsilon \to 0^+$.
Accordingly, the second goal of this work is to prove the existence and uniqueness of such limit dPMF dynamics.
This will only be possible for large enough interaction parameters, when global dPMF dynamics sustain isolated blowups for small enough $\epsilon>0$. 
PMF dynamics with zero refractory period $\epsilon=0$ are of interest for analysis as their blowups can be resolved just as for $\epsilon>0$, whereas their inter-blowup dynamics are nondelayed.


\subsection{Time-change approach}

\begin{figure}[htbp]
\begin{center}
\includegraphics[width=\textwidth]{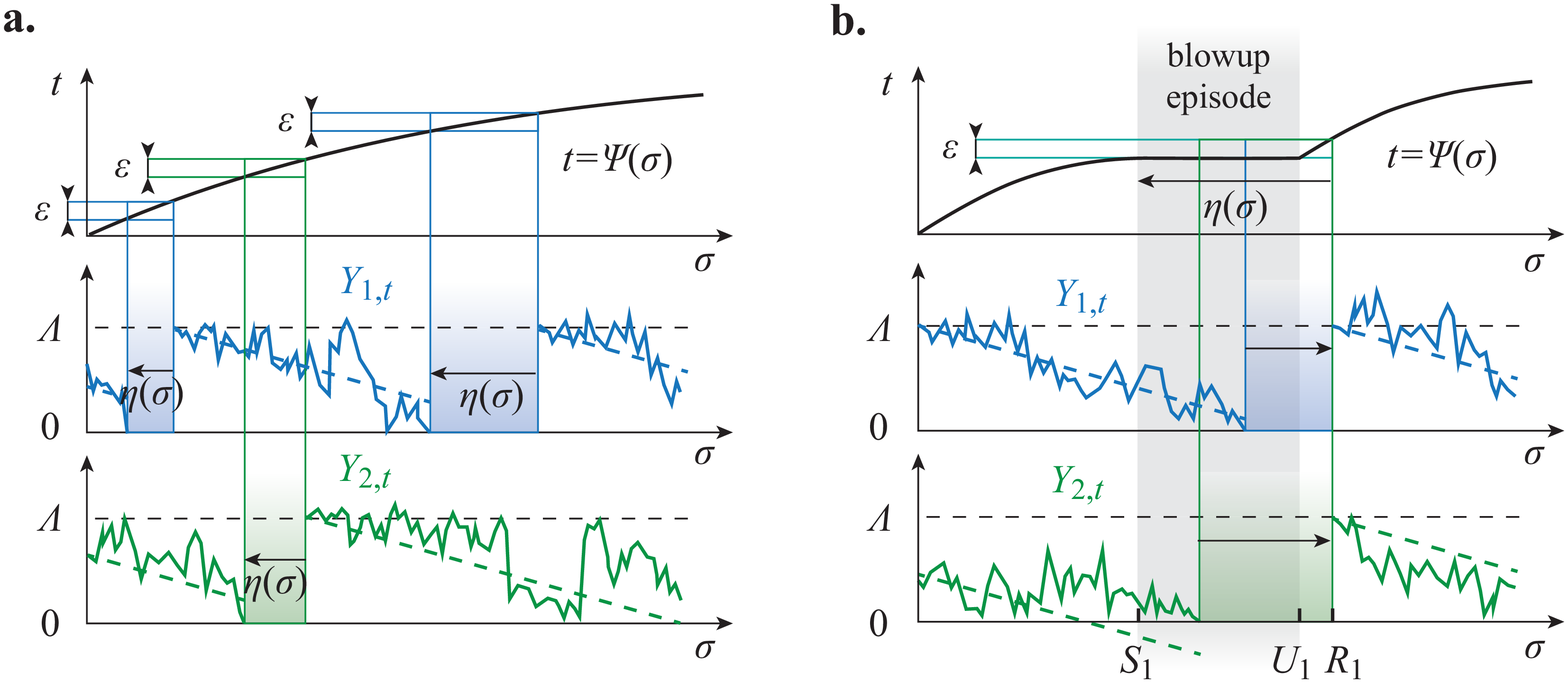}
\caption{
{\bf Time-changed picture.}
{\bf a.}
In the absence of blowups, the cumulative drift $\Phi$ specifies an increasing function which can be interpreted as an invertible time change. 
Denoting the inverse time change by $\Psi=\Phi^{-1}$ and given a representative process $X_t$ of the dPMF dynamics, the  process $Y_\sigma = X_{\Psi(\sigma)}$ follows a noninteracting drifted Wiener dynamics with negative unit drift and nonhomogeneous backward-delay function $\eta: \sigma \mapsto \sigma - \Phi(\Psi(\sigma)-\epsilon)$. 
Thus, in the time-change picture, all the interactions at play in dPMF dynamics are encoded by the time-dependence of delays. 
{\bf b.}
In the absence of blowups, full blowups are marked by a flat section $[U_1,S_1)$ of the inverse time change $\Psi$, where $U_1$ is the blowup trigger time and $U_1$ is the blowup exit time.
This corresponds to the backward-delay function $\eta$ having a jump discontinuity of size $S_1-U_1$ in $U_1$.
This also means that the forward-delay function $\tau$ specifying reset times is constant over the interval $[U_1,S_1)$, so that all processes involved in the blowup episode can reset at the same time $R_1$.
}
\label{fig:timeChange}
\end{center}
\end{figure}

In \cite{TTPW}, we analytically characterized blowup generation in dPMF dynamics by mapping the original time-homogenous, nonlinear dynamics onto a time-inhomogeneous, linear dynamics (see Fig. \ref{fig:timeChange}).
Such a mapping is operated by considering the cumulative drift $\Phi$ as an implicitly defined time-change function
\begin{eqnarray}\label{eq:sigmaTime_int}
\sigma = \Phi(t) = \nu t + \lambda F(t) \, , \quad \mathrm{with} \quad  F(t)=\int_0^t f(s) \, \dd s \, ,
\end{eqnarray}
which is only assumed to be a \emph{c\`adl\`ag} increasing function.
Due to the Poisson-like attributes of the neuronal drives, the time change $\Phi$ parametrizes the dPMF dynamics of a representative process as $X_t=Y_{\Phi(t)}$, where the time-changed dynamics $Y_\sigma$ obeys a linear, noninteracting dynamics.
The latter dynamics  is that of a Wiener process absorbed at zero, with constant negative unit drift and with reset at $\Lambda$, but with time-inhomogeneous refractory period specified via a $\Phi$-dependent, backward-delay function $\sigma \mapsto \eta[\Phi](\sigma)$ (see Fig. \ref{fig:timeChange}).
Independent of the presence of blowups, the functional dependence of $\eta$ on the time change $\Phi$ is given by 
\begin{eqnarray}\label{eq:eta_int}
\eta(\sigma) = \sigma - \Phi(\Psi(\sigma) -\epsilon) \, ,
\end{eqnarray}
where $\Psi=\Phi^{-1}$ refers to the inverse of the strictly increasing time change  $\Phi$.
Given a backward-delay function $\eta$, the transition kernel of the process $Y_\sigma$ denoted by  $(\sigma,x) \mapsto q(\sigma,x)= \dd \Prob{Y_\sigma \in \dd x \, \vert \, Y_\sigma>0} / \dd x$ satisfies the time-changed PDE problem 
\begin{eqnarray}\label{eq:qPDE_int}
\partial_\sigma q &=& \partial_x q +\frac{1}{2} \partial^2_{x} q  + \frac{\dd}{\dd \sigma} [G(\sigma-\eta(\sigma))] \delta_{\Lambda}  \, , 
\end{eqnarray}
with absorbing and conservation conditions respectively given by 
\begin{eqnarray}\label{eq:abscons_int}
q(\sigma,0)=0 \quad \mathrm{and} \quad \partial_\sigma G(\sigma)=\partial_x q(\sigma ,0) /2 \, .
\end{eqnarray}
In equations \eqref{eq:qPDE_int} and \eqref {eq:abscons_int}, $G$ denotes the $\eta$-dependent cumulative flux of $Y_\sigma$ through the zero threshold.
By definition of the time change $\Phi$, which is such that $X_t=Y_{\Phi(t)}$, $G$ is related to the cumulative flux $F$ via $F=G \circ \Phi$.
A key result of \cite{TTPW} is that as long as the delay function $\eta$ remains bounded, the PDE problem defined by \eqref{eq:qPDE_int} and \eqref{eq:abscons_int} admits a unique solution parametrized by an unconditionally smooth cumulative function $G=G[\eta]$.
This result holds even in the presence of blowups for the time-changed versions of the initial conditions given in Assumption \ref{assump1}.
These time-changed  initial conditions are specified as follows:

\begin{definition}\label{def:initCond_int}
Given normalized initial conditions $(p_0,f_0)$ in $\mathcal{M}(\mathbbm{R}^+) \times \mathcal{M}([-\epsilon,0))$, the initial conditions for the time-changed problem are defined by $(q_0,g_0)$ in $\mathcal{M}(\mathbbm{R}^+) \times \mathcal{M}([\xi_0,0))$ such that
\begin{eqnarray}
q_0=p_0 \quad \mathrm{and} \quad g_0 = \frac{\dd G_0}{\dd \sigma} \quad \mathrm{with} \quad G_0=(\mathrm{id} - \nu \Psi_0)/\lambda\, ,  \nonumber
\end{eqnarray}
where the function $\Psi_0$ and the number $\xi_0$ are given by:
\begin{eqnarray}
\Psi_0(\sigma) 
&=& 
\inf  \left\{  t \geq 0 \, \bigg \vert \, \nu t + \lambda \int_0^t f_0(s) \, \mathrm{d}s > \sigma \right\} \, ,  \nonumber\\
\xi_0&=& -\nu \epsilon -\lambda \int_{-\epsilon}^0 f_0(t) \, \dd t < 0 \, .  \nonumber
\end{eqnarray}
\end{definition}

In light of  \eqref{eq:eta_int},  the cumulative function $G=G[\Phi]$ actually depends on $\Phi$ via $\eta=\eta[\Phi]$.
This realization allows one to interpret the implicit definition of $\Phi$ given in \eqref{eq:sigmaTime_int} as a self-consistent equation for admissible time changes:
\begin{eqnarray}\label{eq:selfcons_int}
\Phi(t) = \nu t + \lambda G[\Phi](\Phi(t)) \, .
\end{eqnarray}
In \cite{TTPW}, we show that  such an interpretation specifies $\Phi$ as the solution of a certain fixed-point problem.
This  fixed-point problem turns out to be most conveniently formulated in term of the inverse time change $\Psi=\Phi^{-1}$, assumed to be a continuous, nondecreasing function.
Given an inverse time change $\Psi$, the time change $\Phi$ can be recovered as the right-continuous inverse of $\Psi$.
In the time-changed picture, blowups happen if the inverse time change $\Psi$ becomes locally flat and a synchronous event happens if $\Psi$ remains flat for a finite amount of time.
Informally, flat sections of $\Psi$ unfold blowups by freezing time in the original coordinate $t$, while allowing time to pass in the time-changed coordinate $\sigma$.
This unfolding of blowups in the time-changed picture is the key to analytically resolve blowups in dPMF dynamics.
Concretely, this amounts to showing the existence and uniqueness of solutions to the following fixed-point problem:

\begin{definition}\label{def:fixedpoint_int}
Given time-changed initial conditions $(q_0, g_0)$ in $\mathcal{M}(\mathbbm{R}^+) \times \mathcal{M}([\xi_0,0))$, an admissible inverse time change $\Psi$ satisfies the fixed-point problem
\begin{eqnarray}\label{eq:fixedPoint_int}
\forall \; \sigma \geq 0 \, , \quad \Psi(\sigma) = 
\left\{
\begin{array}{ccc}
\left( \sigma - \lambda \int_0^\sigma g_0(\xi) \dd \xi \right) / \nu   							& \quad \mathrm{if} &  -\xi_0 \leq \sigma < 0 \, , \vspace{5pt}\\
\sup_{0 \leq \xi \leq \sigma} \big( \xi - \lambda G[\eta](\xi)\big) / \nu   & \quad  \mathrm{if}  &  \sigma \geq 0  \, .
\end{array}
\right.
\end{eqnarray}
where $G[\eta]$ is the smooth cumulative flux uniquely specified by the time-inhomogeneous backward-delay function $\eta: \mathbbm{R}^+ \to \mathbbm{R}^+$ (see Definition \ref{def:quasirenew_int}).
The fixed-point nature of the problem follows from the definition of the backward-delay function $\eta$ as the $\Psi$-dependent time-wrapped version of the constant delay $\epsilon$:
\begin{eqnarray}\label{eq:eta_int}
\eta(\sigma) = \sigma - \Phi(\Psi(\sigma) -\epsilon)   \quad \mathrm{with} \quad \Phi(t)= \inf  \left\{  \sigma \geq \xi_0 \, \big \vert \,\Psi(\sigma) > t \right\}  \, ,
\end{eqnarray}
for which we consistently have $\eta(0)=- \Phi(-\epsilon)=\xi_0$.
\end{definition}

\begin{figure}[htbp]
\begin{center}
\includegraphics[width=0.8\textwidth]{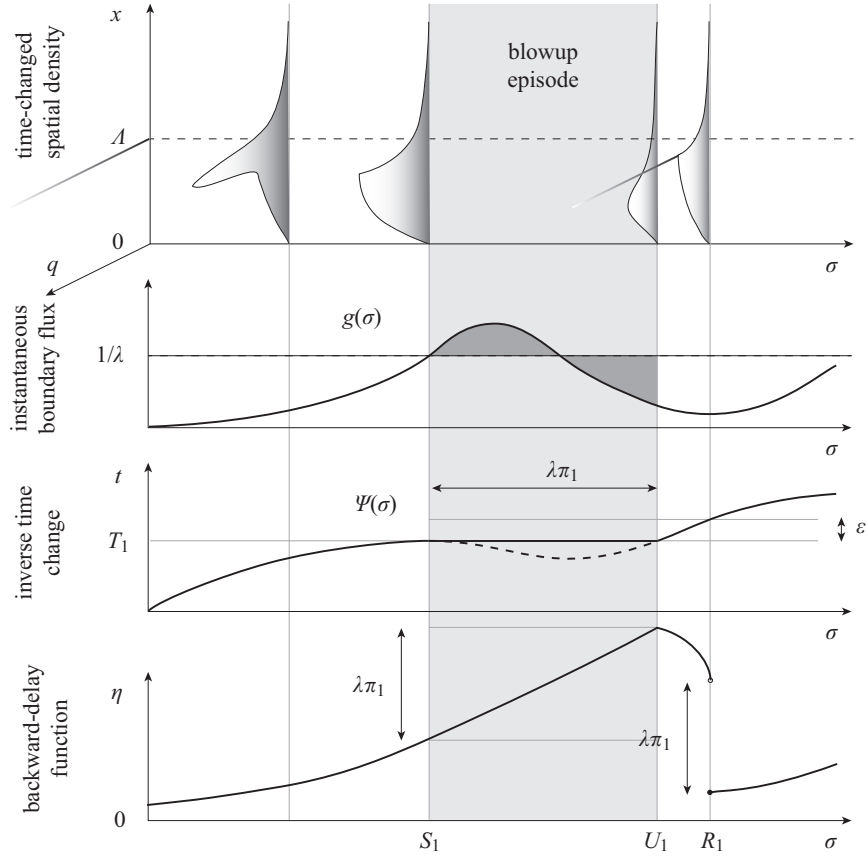}
\caption{{\bf Blowup resolution in the time-change picture.}
Schematic explosive dPMF dynamics considering a purely spatial initial condition with unit mass concentrated at $\Lambda$: $q_0=\delta_\Lambda$.
Initially, the dPMF dynamics solving the time-changed PDE is that of a noninteracting Wiener process with negative unit drift and delayed reset at $\Lambda$.
For $\sigma>0$, such dynamics admits a time-dependent density $x \mapsto q(\sigma, x)$ that is smooth on $(0,\infty)$, except for a slope discontinuity at the reset site $\Lambda$. 
Moreover, the density $x \mapsto q(\sigma, x)$ is locally differentiable in $0^+$ and the time-changed rate of inactivation $g(\sigma)$ is determined as the instantaneous flux: $g(\sigma) = \partial_x q(\sigma,0)/2$.
For large enough $\lambda$, the flux $g$ crosses the level $1/\lambda$ as a locally strictly convex increasing function at time $S_1$.
In principle, this would correspond to the inverse time change $\Phi: \sigma \mapsto (\sigma-\lambda G(\sigma) )/\nu$ admitting a strict local maximum at $S_1$.
However, such a behavior is not allowed as it would indicate that $T_1=\Phi(S_1)$ is a time-reversal point for the original dPMF dynamics.
This reveals $T_1$ as a blowup time for the original dynamics and $S_1$ as a blowup trigger time for the time-changed dynamics. 
During a blowup episode, the original time freezes, while the time changed dynamics is allowed to proceed unimpeded.
This corresponds to imposing that the inverse time change be flat via the definition $\Phi(\sigma)=\sup_{0 \leq \xi \leq \sigma} \big( \xi - \lambda G[\eta](\xi)\big) / \nu$. 
Accordingly, during a blowup episode, the backward-delay function $\eta$ increases with slope one and no reset can occur.
In other words, during a blowup episode, the time-changed dynamics loses its reset character and probability mass is gradually lost by inactivation in zero.
Eventually, this entails a decrease in the inactivation rate $g(\sigma)$ and an increase in $\sigma - \lambda G[\eta](\sigma)\big) / \nu$, up to a time $U_1$ when it reaches the value $T_1=\Phi(S_1)$ anew.
The time $U_1$ marks the blowup exit time and necessarily satisfies $U_1-S_1=\lambda \pi_1$ where $\pi_1$ is the fraction of processes that inactivate during the blowup episode.
Moreover, it is also such that the mean value of $g$ over the blowup episode is exactly $1/\lambda$.
Finally, the inactivated fraction $\pi_1$ is reset all at once at a later time $R_1$, referred to as the blowup reset time for which $\Phi(R_1) = T_1 + \epsilon$.
This instantaneous reset corresponds to a jump discontinuity in the backward-delay function of size $-\lambda \pi_1$.
}
\label{fig:PDE}
\end{center}
\end{figure}

%

In \cite{TTPW}, we further showed that the cumulative flux $G[\eta]$, introduced in the PDE problem defined by equations \eqref{eq:qPDE_int} and \eqref {eq:abscons_int}, is most conveniently characterized in terms of an integral equation obtained via renewal analysis:

\begin{definition}\label{def:quasirenew_int}
Given a time-inhomogeneous backward-delay function $\eta: \mathbbm{R}^+ \to \mathbbm{R}^+$, the cumulative flux $G$ is given as the unique solution to the quasi-renewal equation
\begin{eqnarray}\label{eq:DuHamel_int}
G(\sigma) = \int_0^\infty H(\sigma,x) q_0(x) \, \dd x  + \int_0^\sigma H(\sigma-\tau,\Lambda) \, \dd G(\tau-\eta(\tau))   \, ,
\end{eqnarray}
where by convention we set $G(\sigma)=G_0(\sigma)=\int_0^\sigma g_0(\xi) \dd \xi$ if  $-\xi_0 \leq \sigma < 0$.
The integration kernel featured in \eqref{eq:DuHamel_int} is specified as $\sigma \mapsto H(\sigma,x) = \Prob{\tau_x \leq \sigma} $, where $\tau_x$ is the first-passage time to zero of a Wiener process started in $x>0$ and with negative unit drift.  
\end{definition}

Definitions \ref{def:fixedpoint_int}  and \ref{def:quasirenew_int} fully specify the dPMF fixed-point problem in the time-change picture.
In this time-changed picture, the occurrence of a (first) blowup involves two distinct times: the blowup trigger time $S_1$ and the blowup exit time $U_1$, so that a synchronous event of size $\pi_1$ corresponds to $\Psi$ remaining flat on $[S_1,U_1)$ with $S_1=U_1+\lambda \pi_1$ (see Fig. \ref{fig:PDE}).
The definition of the blowup trigger time $S_1$ directly follows from Proposition \ref{def:blowup_int}, which can be recast in the time-changed picture as:

\begin{definition}
Under Assumption \ref{assump1}, the first blowup trigger time is defined as
\begin{eqnarray}\label{eq:blowup_int}
S_1 = \inf \{  \sigma > 0 \, \vert \, g(\sigma)  > 1/\lambda \} \, .
\end{eqnarray}
where $g(\sigma)= \partial_\sigma G(\sigma) = \partial_x q(\sigma, 0)/2$ is the smooth instantaneous flux in zero. 
\end{definition}

In turn, the full-blowup assumption \ref{assump2} admits a more natural but equivalent formulation in the time-changed picture:

\begin{assumption}\label{assump3}
At the blowup trigger time $S_1$,  the smooth instantaneous flux $g$ is such that $\partial_\sigma g (S_1)>0$.
\end{assumption}

The above formulation of the full-blowup assumption is natural in view of the fixed-point problem \ref{def:fixedpoint_int}.
Indeed, such an assumption implies that the function $\sigma \mapsto \sigma-\lambda G(\sigma)$ becomes decreasing on a finite interval to the left of $S_1$.
Thus, as a solution to the fixed-point problem \eqref{eq:fixedPoint_int}, $\Psi$ must remain flat on a nonempty maximum interval $[S_1,U_1)$, which corresponds to a synchronous event of size $\pi_1=(S_1-U_1)/\lambda$ (see Fig. \ref{fig:PDE}).
Incidentally, this observation sheds light on Definition \ref{def:blowup_int} characterizing full blowups.
During a full blowup episode, $\Psi$ remains flat so that the backward-delay function is constant on $[S_1,U_1)$.
As a result, the renewal-type integral term in \eqref{eq:DuHamel_int} vanishes and $G$ is revealed as the cumulative flux of a linear time-changed dynamics $Y_\sigma$, but without birth term due to reset.
This amounts to stopping the clock for the original time coordinate $t$, while letting the clock run in the changed coordinate $\sigma$.
Such a stoppage can only be maintained until $\Psi$ exceeds its blowup trigger value $\Psi(S_1)$ (see Fig. \ref{fig:PDE}), justifying the definition of blowup exit times $U_1$ as:

\begin{definition}\label{def:exitTime_intro}
The first blowup exit time satisfies
\begin{eqnarray}\label{eq:exit_int}
U_1 
=
\inf \{ \sigma > 0  \, \vert \, \Psi(\sigma) > \Psi(S_1) \} 
= S_1+ \lambda \pi_1 \, ,
\end{eqnarray}
where $ \pi_1$ is determined by the self-consistent condition \eqref{eq:selfProb}.
\end{definition}

The above definition of the blowup exit time is consistent with the characterization of the blowup size $\pi_1$ given in \eqref{eq:selfProb}.
Indeed, one can check that
\begin{eqnarray}
U_1 
=
\inf \{ \sigma > 0  \, \vert \, \Psi(\sigma) > \Psi(S_1) \}  
=
\inf \{ \sigma > 0  \, \vert \, \sigma-S_1> \lambda (G(\sigma) -G(S_1)) \}  \, . \nonumber
\end{eqnarray}
Then, introducing the reduced variable $p=(\sigma-S_1)/\lambda$, we consistently have 
\begin{eqnarray}
U_1&=&
S_1 + \lambda \inf \{p> 0  \, \vert \, p> G(S_1+\lambda p) -G(S_1) \}  \, ,  \nonumber\\
&=&
S_1 + \lambda \inf \left\{ p>0 \,  \vert \, p>\Prob{\tau_{X_{T_1}} < \lambda p } \right\}  \, , \nonumber
\end{eqnarray}
where the last equality follows from the fact that the process $Y_\sigma$ does not reset on $[S_1,U_1)$.
For nonzero vanishing period $\epsilon>0$, the processes that inactivate during a blowup episode reset after a period of duration $\epsilon$ elapses in original time coordinate.
Thus, the reset time of a representative process  $R_1$ shall satisfy $\Psi(R_1) =\Psi(S_1)+\epsilon$.
The latter relation uniquely specifies $R_1$ if $\Phi=\Psi^{-1}$ is continuous in $T_1+\epsilon=\Psi(S_1)+\epsilon$, which holds true if no blowup happens in $T_1+\epsilon$ (see Fig. \ref{fig:PDE}).
Following on \cite{TTPW}, without any continuity assumption, the blowup reset time $R_1$ is generally defined as the leftmost solution of $\Psi(R_1) =\Psi(S_1)+\epsilon$:

\begin{definition}\label{def:resetTime_intro}
The first blowup reset time satisfies
\begin{eqnarray}\label{eq:exit_int}
R_1 
=
\inf \{ \sigma > 0  \, \vert \, \Psi(\sigma) \geq \Psi(S_1)+\epsilon \} 
 \, .
\end{eqnarray}
\end{definition}

The definitions of the blowup trigger time $S_1$, the exit time $U_1$, and the reset time $R_1$ illuminate how the time-changed process $Y_\sigma$ resolves a blowup episode by alternating two types of dynamics (see Fig. \ref{fig:PDE}).
Before a full blowup,  the process $Y_\sigma$ follows a linear diffusion with absorption in zero and resets at $\Lambda$. 
These resets occur with time-inhomogeneous delays, which depends on the inverse time change $\Psi$.
At the blowup onset $S_1$, $\Psi$ becomes locally flat, indicating that the original time $t=\Psi(\sigma)$ freezes, thereby stalling resets.
As a result, after the blowup onset in $S_1$, the dynamics  of $Y_\sigma$ remains that of an absorbed linear diffusion but without resets.
Such a dynamics persists until a self-consistent blowup exit condition is met in $S_1+\lambda \pi_1$.
This condition states that it must take $\lambda \pi_1$  time-changed units for a fraction $\pi_1$ of processes to inactivate during a blowup episode.
Finally, the original time $t=\Psi(\sigma)$ resume flowing past $U_1$ and the inactivated fraction $\pi_1$ is reset at $\Lambda$ at time $R_1$.
Importantly, observe that the time-change process $Y_\sigma$ always behaves regularly in the sense that $G$ remains a smooth function throughout the blowup episode.


\subsection{Results}

In principle, dPMF dynamics could be continued past a blowup episode, and possibly even extended to the whole half-line $\mathbbm{R}^+$.
Simulating the particle systems conjectured to approximate dPMF dynamics suggests that these dynamics can sustain repeated synchronization events over the whole half-line $\mathbbm{R}^+$ (see Fig. \ref{fig:sim}).
This leads to conjecture the existence of solutions $\Psi$ globally defined on $\mathbbm{R}^+$ with a countable infinity of blowup episodes with associated sequence of blowup times $\{ S_k, U_k, R_k\}_{k \in \mathbbm{N}}$.
Extending results from \cite{TTPW} to show the existence of such global solutions requires showing: 

\begin{itemize}
\item \emph{Unconditional explosiveness}:  $(i)$ the so-called non-false-start exit condition $\partial_\sigma q(U_k,0)/2<1/\lambda$ and $(ii)$ the full-blowup condition $\partial^2_\sigma \Psi(S_k^-)<0$ are satisfied for all blowup episodes, not just for the first one.
\item \emph{Full-time domain}: the blowup times $T_k=\Psi(S_k)=\Psi(U_k)$ do not have an accumulation point, i.e., $\lim_{k \to \infty} T_k = \infty$, which means that the explosive solution is defined over the whole half-line $\mathbbm{R}^+$.
\end{itemize}

The main difficulties in establishing the two above properties is that blowup times may not be well ordered, i.e., $R_k > S_{k+1}$ for some $k>0$ and that blowup sizes $\pi_k$ may exhibit vanishing sizes: $\lim_{k \to \infty} \pi_k =0$.
However, one can exclude these possibilities in the strong interaction regime $\lambda \gg \Lambda$, for which the stable repetition of synchronization events can be understood intuitively:

$(i)$ 
 Consider an initial condition of the form $q_0=\pi_0 \delta_\Lambda$ with $\pi_0=1$, which corresponds to a full reset at time $R_0=0$ and which satisfies Assumption \ref{assump1}.
For a full blowup to occur next, the post-reset flux $g$ needs to satisfy the full-blowup Assumption \ref{assump3} at some trigger time $S_1$, where   we must have $g(S_1) =1/\lambda$ and $\partial_\sigma g(S_1)>0$.
For large enough $\lambda$, we expect this flux $g$ to be primarily due to processes that do not reset more than once on $[0,S_1]$.
In other words, we have $g(\sigma) \simeq  h(\sigma,\Lambda)$, where $h(\cdot,\Lambda)$ denotes the first-passage density of a Wiener process started at $\Lambda$ with unit negative drift and with a zero boundary.
The function $h(\cdot,\Lambda)$ is known to be a strictly increasing convex function on some nonempty interval $[0,\sigma^\dagger)$ with $h(0, \Lambda)=0$.
Thus, choosing a large enough $\lambda>\Lambda$ guarantees that the full-blowup Assumption \ref{assump3} will be met at some finite time $S_1>0$ with $S_1<\sigma^\dagger$.
One can check the scaling $S_1 \propto 1/\ln \lambda$,
so that the fraction of inactive processes at trigger time $S_1$, denoted $P(S_1)$, can be made arbitrarily small for large enough $\lambda$.
Actually, provided the refractory period is such that $\epsilon=O(1/\lambda)$, one can find an upper bound for $P(S_1)$ of the form $O(1/\lambda)$.

$(ii)$ 
Past the trigger time $S_1$, a fraction $1-P(S_1)$ of processes are susceptible to synchronize during the blowup episode.
It turns out that the actual size of the blowup $\pi_1<1-P(S_1)$ can be made arbitrary close to $1-P(S_1)$ for large enough $\lambda$.
Again, this is because we still have $g(\sigma) \simeq  h(\sigma,\Lambda)$ past the trigger time $S_1$, which satisfies $S_1<\sigma^\dagger$.
This observation, together with the monotonicity properties of $h(\cdot,\Lambda)$, implies that $g(\sigma)>1/\lambda$ on $[\sigma^\dagger, \sigma^\star]$, where $\sigma^\star$ is the unique maximizer of $h(\cdot,\Lambda)$.
This means that the blowup episode includes at least the nonempty time interval $[\sigma^\dagger, \sigma^\star]$, during which at least a fraction $\Delta_H=\int_{\sigma^\dagger}^{\sigma^\star} h(\sigma, \Lambda) \dd \sigma>0$ synchronizes.
Thus, we must have $\pi_1 \geq \Delta_H>0$, so that the duration of the blowup $\lambda \pi_1\geq \lambda \Delta_H$ can be made arbitrary long for large enough $\lambda$.
The key is then to observe that the tail behavior of $h(\cdot,\Lambda)$ indicates that inactivation during blowup asymptotically happens with hazard rate $1/2$ for large times. 
This observation allows one to deduce an exponential lower bound for the blowup size of the form, e.g., $\pi_1 \geq \big(1-P(S_1)\big) \big(1-O(e^{-\lambda \Delta_H /4})\big)>1/2$.

$(iii)$ 
Altogether, less than a fraction $1-\pi_1\leq O(1/\lambda)$ of processes survive the blowup at exit time $U_1=S_1+\lambda \pi_1$ with $\pi_1>1/2$.
The distribution of these surviving processes is such that $\partial_x q(U_1,0)/2 \simeq h(U_1,\Lambda) \leq O(e^{-U_1/2})=O(e^{-\lambda/4})<1/\lambda$ for large enough $\lambda$
so that the no-false-start Assumption \ref{assump1} can be met for large enough $\lambda>\Lambda$.
Moreover, we expect the fraction of surviving processes at $U_1$, i.e., $\Vert q(U_1, \cdot) \Vert_1$, to be so small that they cannot trigger a blowup alone.
In other words, the assumption of well-ordered blowups holds:
 the fraction of inactivated processes $\pi_1$ must reset at some time $R_1>U_1$ before any subsequent blowup can occur.
Then, choosing $\pi_1=1- O(1/\lambda)$ close enough to one ensures that $g$ remains primarily shaped by those processes that reset at time $R_1$, so that $g(\sigma) \simeq \pi_1 h(\sigma-R_1,\Lambda)$.
In turn, if the well-ordered property of blowups holds, we can apply $(i)$ anew but starting from $R_1$ with a controlled probability mass $\pi_1$, rather than the full mass $\pi_0=1$.

$(iv)$  To prove the well ordered property of blowups, one has to establish some \emph{a priori} bounds about the boundary flux $g$ with respect to the $L_1$ norm $\Vert q(U_1, \cdot) \Vert_1$.
Establishing such \emph{a priori} bounds requires to restrict the set of considered initial conditions to the so-called set of natural initial conditions.
In a nutshell, these are those---not necessarily normalized---initial distributions such that all the active and inactive processes at the starting time have been reset at $\Lambda$ at some point in the past.
For bearing on processes started away from zero at $\Lambda$, natural initial conditions cannot lead to large transient variations in boundary fluxes.
As a result, one can show that in the absence of reset, $\vert g \vert$ and $\vert \partial_\sigma g \vert$ are uniformly upper bounded by $O(\Vert q(U_1, \cdot) \Vert_1)$.
Then, the well-ordered property of blowups follows from showing that $\vert g \vert =  O(\Vert q(U_1, \cdot) \Vert_1) <1/\lambda$ in the absence of the blowup reset.
This requires that one has at least that $\Vert q(U_1, \cdot) \Vert_1 = O(1/\lambda)$, consistent with the fact that $\Vert q(U_1, \cdot) \Vert_1 \leq 1-\pi_1 = O(1/\lambda)$.
but not precise enough to conclude without explicit knowledge of the bounding coefficients..

$(v)$  Establishing that the no-false-start condition (Assumption \ref{assump1}), the full-blow up condition (Assumption \ref{assump2}), and the well ordered condition can be all met requires to estimate coefficients in the various $\lambda$-dependent bounds at play.
Then, the strategy is to use these explicit bounds to show that given a large enough $\lambda>\Lambda$, there exists a threshold mass $\Delta_\pi$, $1/2<\Delta_\pi<1$, such that for all blowups of size $\pi_1 \geq \Delta_\pi$, the next full blowup also satisfies $\pi_2>\Delta_\pi$.
From there, a direct induction argument on the number of blowups shows that there is a unique inverse time change $\Psi: \mathbbm{R}^+ \mapsto \mathbbm{R}^+$ dynamics as long as the initial mass $\pi_0 \geq \Delta_\pi$.
The fact that the domain  of $\Psi$ is the full half-line $\mathbbm{R}^+$ follows from $\Psi$ having a countable infinity of blowup with size at least $\Delta_\pi>1/2$.
The fact that the domain of $\Phi=\Psi^{-1}$ is also the full half-line $\mathbbm{R}^+$ follows from the fact that  well ordered blowups must be separated by the duration of the nonzero refractory period $\epsilon>0$.\\

Making the arguments above rigorous leads to our first main result:

\begin{theorem}\label{th:main1}
Consider some normalized natural initial conditions $(p_{\epsilon,0},f_{\epsilon,0})$ in $\mathcal{M}(\mathbbm{R}^+) \times \mathcal{M}([\xi_0,0))$ such that $p_{\epsilon,0}$ contains a Dirac-delta mass $\pi_{\epsilon,0}$, with $0 < \pi_{\epsilon,0} \leq 1$, at reset value $\Lambda$.

For small enough $\epsilon>0$, large enough $\lambda>\Lambda$, and large enough $\pi_{\epsilon,0}<1$, there exists a unique explosive dPMF dynamics parametrized by the time change $\Phi_\epsilon: \mathbbm{R}^+ \rightarrow  \mathbbm{R}^+$, with a countable infinity of blowups.
These blowups occurs at consecutive times $T_{\epsilon,k}$, $k \in \mathbbm{N}$, with size $\pi_{\epsilon,k}=(\Phi_\epsilon(T_{\epsilon,k})-\Phi_\epsilon(T_{\epsilon,k}^-))/\lambda$, and are such that $\pi_{\epsilon,k}$ and $T_{\epsilon,k+1}-T_{\epsilon,k}$ are both uniformly bounded away from zero.
\end{theorem}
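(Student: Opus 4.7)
The plan is to set up an induction on blowup episodes, leveraging the time-change fixed-point framework of Definition \ref{def:fixedpoint_int} together with the local blowup resolution from Proposition \ref{def:blowup_int}. At each stage, the post-reset state $q(R_k,\cdot)$ is treated as the initial datum for a new local problem, and one verifies that this new problem again satisfies Assumptions \ref{assump1} and \ref{assump3}, so that the next trigger time $S_{k+1}$, exit time $U_{k+1}$, and reset time $R_{k+1}$ are well-defined. The crux is a self-improving estimate: one exhibits a threshold $\Delta_\pi \in (1/2,1)$ and a coupling between $\lambda$, $\epsilon$, and $\pi_{\epsilon,0}$ such that the condition $\pi_{\epsilon,k} \geq \Delta_\pi$ is preserved from one blowup to the next. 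Uniqueness of $\Phi_\epsilon$ then follows from the uniqueness of the fixed-point solution at each stage, as established in \cite{TTPW}.

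For the \textbf{base step}, starting from a natural condition $q_0$ containing a Dirac mass $\pi_0$ at $\Lambda$, I would decompose the boundary flux as $g = \pi_0 h(\cdot,\Lambda) + r$, where $r$ collects the contribution of the diffuse part of $q_0$ together with processes that reset at least once. Using the natural-initial-condition hypothesis and a uniform a priori bound of the form $\|r\|_\infty + \|\partial_\sigma r\|_\infty \leq C\|q_0-\pi_0\delta_\Lambda\|_1$ valid in the absence of reset, one controls $g$ on the finite horizon $[0,\sigma^\star]$, where $\sigma^\star$ is the maximizer of $h(\cdot,\Lambda)$. Since $h(\cdot,\Lambda)$ is smooth, strictly convex increasing on some $[0,\sigma^\dagger)$ with $h(0,\Lambda)=0$, choosing $\lambda$ large makes the equation $g(\sigma)=1/\lambda$ solvable at a trigger time $S_1$ scaling like $1/\ln\lambda$ and with $\partial_\sigma g(S_1)>0$, thereby verifying the full-blowup Assumption \ref{assump3}. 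A quantitative version of the same estimate gives $P(S_1)\leq C/\lambda$.

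During the blowup episode the renewal term in \eqref{eq:DuHamel_int} vanishes because $\eta$ is constant on $[S_1,U_1)$, so $g$ reduces to the pure first-passage flux starting from $q(S_1,\cdot)$. Since $g(\sigma)>1/\lambda$ on $[\sigma^\dagger,\sigma^\star]$, the fixed-point definition of $U_1$ forces $U_1\geq \sigma^\star$, so that $\lambda\pi_1 \geq \lambda\Delta_H$ with $\Delta_H=\int_{\sigma^\dagger}^{\sigma^\star} h(\sigma,\Lambda)\,\dd\sigma>0$. Past $\sigma^\star$ the first-passage density decays with asymptotic hazard rate $1/2$, hence an exponential tail estimate yields a lower bound of the form $\pi_1 \geq \pi_0\bigl(1-Ce^{-\lambda\Delta_H/4}\bigr) - P(S_1)$. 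For $\lambda$ large and $\pi_0\geq \Delta_\pi$ one obtains $\pi_1 \geq \Delta_\pi$. The no-false-start exit condition is then a consequence of $\partial_x q(U_1,0)/2 \leq h(U_1,\Lambda)+O(\|q(U_1,\cdot)\|_1)$ and the estimates $U_1\gtrsim \lambda$ and $\|q(U_1,\cdot)\|_1\leq 1-\pi_1 = O(1/\lambda)$, which together give a bound far smaller than $1/\lambda$.

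For the \textbf{inductive step and well-ordering}, the residual mass after exit satisfies $\|q(U_k,\cdot)\|_1 \leq 1-\pi_k = O(1/\lambda)$; combining this with the natural-initial-condition a priori bound $\|g\|_\infty \leq C\|q\|_1$ valid in the absence of reset gives $g(\sigma)<1/\lambda$ on the whole interval $(U_k,R_k)$, so no premature blowup can be triggered before the mass $\pi_k$ resets at $R_k$ defined by $\Psi(R_k)=\Psi(S_k)+\epsilon$. Hence $T_{\epsilon,k+1}-T_{\epsilon,k}\geq \epsilon$. Since $q(R_k,\cdot)$ is again a natural distribution containing a Dirac mass $\pi_k \geq \Delta_\pi$ at $\Lambda$, the base-step argument applies verbatim to produce the $(k{+}1)$-th blowup with $\pi_{k+1}\geq \Delta_\pi$. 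This closes the induction and shows $\lim_k T_{\epsilon,k}=\infty$, so $\Phi_\epsilon$ is defined on all of $\mathbbm{R}^+$.

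The main obstacle, and the step I would expect to consume most of the technical work, is item $(iv)$ of the outline: obtaining the quantitative a priori bound $\|g\|_\infty \leq C\|q\|_1$ with a constant $C$ whose $\lambda$-dependence can be beaten by the residual mass $O(1/\lambda)$, and propagating this bound across the rescaling of mass at each reset event. This forces one to track carefully how the smoothing action of the killed Wiener semigroup, the regularity built into natural initial conditions, and the singular factor arising from the self-consistent definition of $\Psi$ combine; without sharp control of the constants, the threshold $\Delta_\pi$ could degrade across successive blowups and the bootstrap would fail.
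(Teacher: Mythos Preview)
Your proposal is correct and follows essentially the same approach as the paper: an induction on blowup episodes driven by the decomposition $g=g_{\pi_k}+g_{\cancel{\pi_k}}$, with the a priori bounds on $g$ and $\partial_\sigma g$ for natural initial conditions (your item $(iv)$) as the key technical ingredient, followed by the convexity of $h(\cdot,\Lambda)$ for ignition and the asymptotic hazard-rate argument for combustion. The one clarification worth noting is that the a priori constants in the paper turn out to depend only on $\Lambda$ and not on $\lambda$ at all, which is precisely what makes your worry about ``$\lambda$-dependence that can be beaten'' evaporate and allows the bootstrap to close cleanly.
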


In the following, we prove an equivalent version of the above theorem for the time-changed dynamics, i.e., Theorem \ref{th:globSol}.
The proof of Theorem \ref{th:globSol} only relies on  a few properties of the smooth first-passage density function $h(\cdot,\Lambda)$, which plays the key role throughout this analysis.
Although we do not refer to these properties explicitly in the proof, we list them as follows:

\begin{property1}
The function $h(\cdot, \Lambda)$ is both strictly convex and log-concave on some finite interval $[0,\sigma^\dagger]$ with $h(0)=0$.
\end{property1}

\begin{property2}
The hazard rate function of $h(\cdot, \Lambda)$ is asymptotically bounded away from zero and has at most exponential growth, i.e.:
\begin{eqnarray*}
\liminf_{\sigma \to \infty} \;  \frac{h(\sigma, \Lambda)}{\int_\sigma^\infty h(\xi, \Lambda)} \geq r > 0 \, \quad \mathrm{and} \quad \limsup_{\sigma \to \infty} \;  \ln \left(\frac{h(\sigma, \Lambda)}{\int_\sigma^\infty h(\xi, \Lambda)} \right) < \infty \, .
\end{eqnarray*}
\end{property2}

Given some large enough Dirac-delta mass $\pi_k$ at reset time $R_k$, Property 1 ensures that a full blowup occurs for large enough $\lambda$ at trigger time $S_{k+1}>R_k$ and with blowup size $\pi_{k+1}$ lower bounded by some constant $\Delta_H$.
This corresponds to the ``ignition'' phase of the blowup.
Given that  $\pi_{k+1} \geq \Delta_H$, Property 2 ensures that an exponentially small fraction of processes survives the blowup and that this fraction cannot lead to a subsequent blowup without reset, i.e., reignition.
This corresponds to the ``combustion'' phase of the blowup.
A potentially useful consequence of the above observations is that we expect our blowup analysis to extend to more generic diffusion processes.
Indeed, the existence and uniqueness of explosive dPMF dynamics shall hold for all  dynamics derived from autonomous diffusion processes whose first-passage time to a constant level has smooth densities satisfying Property 1 and 2.
This class of diffusion processes includes the Ornstein-Uhlenbeck process, which forms the basis of a widely used class of models in computational neuroscience, called leaky integrate-and-fire neurons \cite{Lapicque:1907,burkitt2006review}.

In stating Theorem \ref{th:main1}, we do note give a precise criterion for how to jointly choose the interaction parameter $\lambda>\Lambda$, the refractory period $\epsilon>0$, and the initial mass $\pi_0$ to yield explosive dPMF dynamics.
This is because the precise criterion given in the proof of Theorem \ref{th:main1} is not tight, our main goal being to establish existence and uniqueness of global solution in the large interaction regime.
Actually, we expect explosive dPMF dynamics to arise as soon as $\lambda>\Lambda$ for small $\epsilon \geq 0$ and we  expect this emergence to be essentially independent of the size of the initial mass $\pi_0$.
One can see the latter point by considering the limit case of PMF dynamics with zero refractory period  in the time change picture.
In the absence of blowups, plugging $\epsilon =0$ in Definition \ref{def:fixedpoint_int} yields valid, nondelayed dynamics with explicitly known instantaneous boundary flux $g$ via renewal analysis \cite{Asmussen:2003}. 
Moreover, classical arguments from renewal analysis show that $g$ is analytic away from zero and that $\lim_{\sigma \to \infty} g(\sigma) = 1/\Lambda$, independent of the  initial conditions.
Thus, it is natural to expect that PMF dynamics exhibit a countable infinity of blowups (at least) as soon as $\lambda>\Lambda$.

One general caveat is that checking the mean-field conjecture becomes exceedingly costly when $\lambda \simeq \Lambda$.
Indeed we find that the convergence to an asymptotically independent regimes for large network size $N\to \infty$ drastically slows down when $\lambda \simeq \Lambda$
---a numerical evidence of critical slowing down~\cite{Casella:2004}.
However, the conjectured PMF dynamics can always be analyzed in the time-changed picture by setting $\epsilon=0$ in Definition \ref{def:fixedpoint_int}.
One specific caveat remains that it is not clear why blowups are resolved in PMF dynamics via the mechanism implied by Definition \ref{def:fixedpoint_int} with $\epsilon =0$.
This is by contrast with dPMF dynamics for which the nonzero refractory period $\epsilon>0$ makes it clear that blowup episodes correspond to halting reset at the time-changed picture.
In view of this, our second main result is to justify that PMF dynamics ``physically'' resolve blowups by showing that these dynamics are recovered from dPMF ones in the limit $\epsilon \to 0^+$.

\begin{theorem}\label{th:main2}
Consider some normalized natural initial condition $p_0$ in $\mathcal{M}(\mathbbm{R}^+)$ such
 that $p_{0}$ contains a Dirac-delta mass $\pi_{0}$, with $0 < \pi_{0} \leq 1$, at reset value $\Lambda$.

For large enough $\lambda>\Lambda$ and large enough $\pi_{0}<1$,
the unique explosive PMF dynamics $\Phi: \mathbbm{R}^+ \rightarrow  \mathbbm{R}^+$ satisfies  $\Phi^{-1} = \lim_{\epsilon \to 0^+} \Phi^{-1}_{\epsilon}$ with compact convergence over $\mathbbm{R}^+$, where $\Phi_{\epsilon}$  are dPMF dynamics with same initial condition $p_0$.
Moreover, we also have convergence of the blowup times and blowup sizes: $\lim_{\epsilon \to 0^+} T_{\epsilon,k} = T_k$ and $\lim_{\epsilon \to 0^+} \pi_{\epsilon,k} = \pi_k$.
\end{theorem}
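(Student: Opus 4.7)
The plan is to work entirely in the time-changed picture and prove compact convergence of $\Psi_\epsilon = \Phi_\epsilon^{-1}$ to a candidate limit, and then translate back to the original coordinate. By Theorem \ref{th:globSol}, for all small enough $\epsilon>0$ the functions $\Psi_\epsilon:\mathbbm{R}^+\to\mathbbm{R}^+$ are globally defined, their blowup sizes $\pi_{\epsilon,k}$ and inter-blowup spacings $T_{\epsilon,k+1}-T_{\epsilon,k}$ admit a common lower bound, and each $\Psi_\epsilon$ is nondecreasing, $1/\nu$-Lipschitz between the flat segments $[S_{\epsilon,k},U_{\epsilon,k})$, and constant on those segments; in particular the whole family is uniformly $1/\nu$-Lipschitz on $\mathbbm{R}^+$. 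With the chosen initial condition $(p_0,0)$, the constant $\xi_{\epsilon,0}=-\nu\epsilon$ shrinks to $0$ as $\epsilon\to 0^+$, so the negative-$\sigma$ branch of \eqref{eq:fixedPoint_int} trivializes in the limit.

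Along any sequence $\epsilon_n\to 0^+$, Arzel\`a--Ascoli (applied locally, combined with a diagonal extraction) then yields a subsequence for which $\Psi_{\epsilon_n}\to \tilde\Psi$ locally uniformly on $\mathbbm{R}^+$, with $\tilde\Psi$ nondecreasing and $1/\nu$-Lipschitz. I would next analyse the associated backward-delay functions $\eta_{\epsilon_n}(\sigma)=\sigma-\Phi_{\epsilon_n}(\Psi_{\epsilon_n}(\sigma)-\epsilon_n)$, which decompose into a refractory contribution of order $\epsilon_n$ between consecutive blowups, and a contribution supported on the flat segments $[S_{\epsilon_n,k},R_{\epsilon_n,k})$ that grows with unit slope and then drops by $\lambda\pi_{\epsilon_n,k}$ at $R_{\epsilon_n,k}$. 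As $\epsilon_n\to 0^+$ the refractory contribution vanishes, while the blowup contribution converges to $\tilde\eta(\sigma)=\sigma-\tilde\Phi(\tilde\Psi(\sigma))$, which is supported precisely on the flat segments of $\tilde\Psi$. The resulting convergence $\eta_{\epsilon_n}\to\tilde\eta$ is locally uniform away from the countable set of blowup reset times, which collapse onto the exit times in the limit.

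The next step is to pass to the limit in the quasi-renewal equation \eqref{eq:DuHamel_int}. Since the kernel $(\sigma,x)\mapsto H(\sigma,x)$ is jointly continuous and bounded, and $\eta_{\epsilon_n}\to\tilde\eta$ in the sense just described, a dominated-convergence argument gives $G_{\epsilon_n}=G[\eta_{\epsilon_n}]\to\tilde G=G[\tilde\eta]$ locally uniformly, and $g_{\epsilon_n}\to\tilde g=\partial_\sigma\tilde G$ locally uniformly on each maximal inter-blowup interval by interior regularity of \eqref{eq:qPDE_int}--\eqref{eq:abscons_int}. Passing to the limit in the sup-formulation of \eqref{eq:fixedPoint_int} then yields $\tilde\Psi(\sigma)=\sup_{0\leq\xi\leq\sigma}\bigl(\xi-\lambda\tilde G(\xi)\bigr)/\nu$ for $\sigma\geq 0$, so $\tilde\Psi$ solves the PMF fixed-point problem with $\epsilon=0$. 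Uniqueness of such a solution---which follows by rerunning the inductive argument of Theorem \ref{th:globSol} with $\epsilon=0$, since its \emph{a priori} estimates are uniform in $\epsilon\geq 0$---forces $\tilde\Psi=\Psi$. As this identification holds for every subsequential limit, the full family $\Psi_\epsilon$ converges compactly to $\Psi$, and hence so does $\Phi_\epsilon^{-1}\to\Phi^{-1}$.

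Convergence of blowup times and sizes then follows from the characterization $S_k=\inf\{\sigma>0 \,:\, g(\sigma)>1/\lambda\}$ together with the transversality ensured by Assumption \ref{assump3}: because $\partial_\sigma g(S_k)>0$, the crossing of the level $1/\lambda$ is nondegenerate, so locally uniform convergence of $g_\epsilon$ to $g$ on any compact away from blowups gives $S_{\epsilon,k}\to S_k$; the identity $U_{\epsilon,k}=S_{\epsilon,k}+\lambda\pi_{\epsilon,k}$ then yields $\pi_{\epsilon,k}\to\pi_k$, and $T_{\epsilon,k}=\Psi_\epsilon(S_{\epsilon,k})\to\Psi(S_k)=T_k$ by uniform continuity. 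The hard part will be justifying the convergence of $\eta_\epsilon$ through blowup transitions and showing that no renewal mass is lost: near each blowup index $k$ the reset time $R_{\epsilon,k}$, defined by $\Psi_\epsilon(R_{\epsilon,k})=\Psi_\epsilon(S_{\epsilon,k})+\epsilon$, coincides with the exit time $U_{\epsilon,k}$ only in the limit, so one must verify that the refractory window $[U_{\epsilon,k},R_{\epsilon,k})$ collapses consistently and that the inactivated mass $\pi_{\epsilon,k}$ is correctly transferred to a Dirac at $\Lambda$ at the same instant it is absorbed. This requires a uniform-in-$\epsilon$ regularity control on $q_\epsilon$ near $x=\Lambda$ and near blowup exits, which is precisely the type of heat-kernel estimate underlying the proof of Theorem \ref{th:globSol}.
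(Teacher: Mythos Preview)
Your compactness-plus-uniqueness strategy is genuinely different from the paper's, but it leaves the central difficulty unresolved---and you explicitly say so in your last paragraph. The paper does \emph{not} attempt to pass to the limit in the global fixed-point equation \eqref{eq:fixedPoint_int}. Instead it proves the equivalent Theorem~\ref{thm:epsilonCont} by direct induction on the blowup index: assuming $\lim_{\epsilon\to 0^+}\Vert q_\epsilon(R_{\epsilon,k},\cdot)-q(R_k,\cdot)\Vert_1=0$, it establishes the same at index $k+1$ through four stage-wise continuity results (Propositions~\ref{prop:convEps1}--\ref{prop:BlowupCont2}) covering inter-blowup evolution, trigger, exit, and reset. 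The technical inputs are an explicit heat-kernel bound (Lemma~\ref{lem:supbound}), a control $0\leq\xi_\epsilon'\leq 1$ near trigger times (Lemma~\ref{lem:xiBound}), and the implicit function theorem applied to \eqref{eq:defjump}, using that $1-\lambda g(U_1)\geq 3/4$ under Assumption~\ref{mainAssump}.

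The Arzel\`a--Ascoli step is fine, but it does not circumvent the hard part. To identify your subsequential limit $\tilde\Psi$ as a solution of the $\epsilon=0$ problem you need $G[\eta_{\epsilon_n}]\to G[\tilde\eta]$, and this is not a dominated-convergence argument: the measure $\dd G_\epsilon(\xi_\epsilon(\tau))$ in \eqref{eq:DuHamel_int} carries a Dirac mass $\pi_{\epsilon,k}\,\delta_{R_{\epsilon,k}}$ at each reset time, so both the location $R_{\epsilon,k}$ and the size $\pi_{\epsilon,k}$ must be shown to converge. Your claimed convergence of $\eta_{\epsilon_n}$ ``locally uniformly away from reset times'' presupposes convergence of $S_{\epsilon_n,k}$, $U_{\epsilon_n,k}$, $R_{\epsilon_n,k}$, but uniform convergence of the Lipschitz functions $\Psi_{\epsilon_n}$ does not by itself pin down the endpoints of their flat segments; that requires the transversality $\partial_\sigma g(S_k)>0$, which in turn needs $g_{\epsilon_n}\to g$---precisely what you are trying to derive. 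Breaking this circularity amounts to carrying out the paper's stage-by-stage $L_1$ analysis, so the compactness envelope does not actually shorten the proof.
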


In the following, we prove an equivalent version of the above theorem for the time-changed dynamics, i.e., Theorem \ref{thm:epsilonCont}.
Given an explosive dPMF dynamics parametrized by $\Phi_\epsilon=\Psi_\epsilon^{-1}$, proving Theorem \ref{thm:epsilonCont}  amounts to showing that the $\epsilon$-dependent backward-delay function 
\begin{eqnarray}\label{eq:def_eta}
\eta_\epsilon(\sigma) = \sigma -\Phi_\epsilon (\Psi_\epsilon(\sigma) - \epsilon)
\end{eqnarray}
is well behaved in the limit $\epsilon \to 0^+$.
By well behaved, we mean that  $(1)$ $\eta_\epsilon$ converges to zero in between blowups on intervals $[R_k,S_k]$, consistent with the nondelayed nature of PMF dynamics,
or $(2)$ $\eta_\epsilon$ converges toward the unit slope  function $\sigma \mapsto \sigma-S_k$ on blowup intervals $[S_k,U_k]$, consistent with halting reset at $S_k$.
Such an alternative covers all cases as we expect instantaneous reset after blowup exit: $U_k=R_k$ for $\epsilon=0$.
This corresponds to assuming a limit of the form
\begin{eqnarray*}
\eta(\sigma) = \sigma - \Phi (\Psi(\sigma)) \, , \nonumber 
\end{eqnarray*}
where crucially, $\Phi$ is the right-continuous inverse of $\Psi$, just as if one consider the fixed-point problem of Definition \ref{def:fixedpoint_int} with $\epsilon=0$. 
The main difficulty is that  \eqref{eq:def_eta}  does not behave continuously for standard topologies when it bears on  functions $\Phi_\epsilon$ with jump discontinuities. 
One route to address this point is to consider topologies specially designed to deal with \emph{c\`adl\`ag} functions such as the Skorokhod topologies \cite{Billingsley:2013,Delarue:2015b}.
However, the continuity of the fixed-point problem specified in Definition \ref{def:fixedpoint_int} with respect to such topologies does not appear straightforward.

A more pedestrian route is to leverage the fact that for small enough $\epsilon>0$, the time change $\Phi_\epsilon$ has only a countable infinity of blowups for large enough interaction parameter $\lambda>\Lambda$ and large enough initial reset mass $\pi_0<1$.
Under these standard conditions, one can show that the associated blowup times $(S_{\epsilon,k},U_{\epsilon,k},R_{\epsilon,k})$, $k$ in $\mathbbm{N}$, can be uniformly controlled with respect to $\epsilon \to 0^+$.
Such control can be established by considering separately the three stages of a time-change blowup dynamics: $(1)$ the blowup trigger stage on $[R_{\epsilon,k},S_{\epsilon,k+1})$, $(2)$ the blowup resolution stage on $[S_{\epsilon,k+1},U_{\epsilon,k+1})$, and $(3)$ the blowup reset stage $[U_{\epsilon,k+1},R_{\epsilon,k+1})$. 
The key observation is that each of these stages has a duration that continuously depends on $\epsilon$ and on the $L_1$ norm of the current state of the dynamics at the start of the stage.
Note that these current states represent natural initial conditions in $\mathcal{M}((0,\infty)) \times \mathcal{M}([-\eta_\epsilon(\sigma), 0)) $ specified by
$\big(q_\epsilon(\sigma,\cdot) , \{ g_\epsilon(\xi -\sigma)\}_{-\eta_\epsilon(\sigma)\leq \xi < 0} \big)$
for $\sigma=R_{\epsilon,k}, S_{\epsilon,k+1}, U_{\epsilon,k+1}$.
Both the continuity of the trigger time $S_{\epsilon,k+1}$  and of the exit time $U_{\epsilon,k+1}$ follow from nondegeneracy conditions, namely that the full-blowup condition $\partial_\sigma g(S_{k+1}) >0$ and the non-false-start condition $\partial_\sigma g(U_{k+1}) <1/\lambda$ holds uniformly with respect to $\epsilon$.
The continuity of the reset time $R_{k+1}$ is straightforward for well-ordered blowups.
Thus, to propagate these continuity results by induction on the number of blowups, one just has to show that for each stage, considering  the terminal dynamical state as a function of the initial dynamical state specifies a continuous mapping in the $L_1$ norm.
Technically, establishing such $L_1$ continuity results, jointly with $\epsilon$-continuity, is primarily made possible by the availability of certain uniform bounds for the heat kernel at times bounded away from zero and infinity.

The above discussion lays out our strategy to prove that PMF dynamics with zero refractory period represents ``physical'' solutions, which is one of our main objectives.
PMF dynamics are of special interest because they admit an explicit iterative construction in terms of well-known analytic functions. 
Moreover, PMF can be easily simulated within the PDE setting. 
Such simulations confirm the prediction from particle-system simulations that explosive PMF solutions become asymptotically periodic (see Fig. \ref{fig:sim}).
In that respect, we conclude by mentioning that the existence of periodic PMF dynamics directly follows from our $L_1$ continuity analysis with $\epsilon=0$.
This is a consequence of  Schauder's fixed-point theorem \cite{Granas:2003} applied to the inter-exit-time mapping
\begin{eqnarray*}
\mathcal{U}: \big(q(U_k,\cdot),\pi_k \big) \mapsto \big(q(U_{k+1},\cdot),\pi_{k+1} \big)
\end{eqnarray*}
defined on $\mathcal{C}$, the convex set of normalized natural initial conditions with large enough blowup mass $\pi_k$ (given some large enough $\lambda>\Lambda$).
Note that we automatically have  $\Vert q(U_k,\cdot) \Vert_1 + \pi_k=\Vert q(U_{k+1},\cdot) \Vert_1 + \pi_{k+1} = 1$ when $\epsilon=0$ so that the set $\mathcal{C}$  is stable by $\mathcal{U}$.
Our analysis shows that the mapping $\mathcal{U}$ is continuous with respect to the $L_1$ norm.
Then, all there is left to show in order to apply Schauder's fixed-point theorem is that the mapping $\mathcal{U}$ is compact in $\mathcal{C}$ with respect to the $L_1$ norm.
But this follows directly from the fact that blowups are lower bounded by some $\Delta_\pi>1/2$.
The latter point implies that the image of $\mathcal{U}$ is included in the image of a compact operator defined in terms of the heat kernel for times at least $1/2$.
Thus there must be a fixed point to $\mathcal{U}$ in $\mathcal{C}$, which corresponds to a periodic PMF dynamics.
Studying the general contractive property of PMF and dPMF dynamics is beyond the scope of this work.


\subsection{Structure}

In Section \ref{sec:modelDef}, we recall results from \cite{TTPW} that allows one to resolve a single dPMF blowup episode in the time-changed picture.
In Section \ref{sec:global}, we leverage these results to show that dPMF solution can be continued past blowup episodes to specify explosive solutions over the whole half-line $\mathbbm{R}^+$ in the large interaction regime.
In Section \ref{sec:limRef}, we show that explosive PMF dynamics, which can be defined explicitly for zero refractory period $\epsilon=0$, are recovered from dPMF dynamics in the limit $\epsilon \to 0^+$.
Appendices \ref{appA} and \ref{appB} contain two technical lemmas that are needed to demonstrate the result of Section \ref{sec:limRef}.


\section{Regularization of explosive dynamics via change of time}\label{sec:modelDef}

In this section, we define the time-changed dynamics obtained by change of variable and introduce the delay functions that parametrize such a dynamics.
We then recall the results from renewal theory that characterize the cumulative flux function associated with a time-changed dynamics
Finally, we utilize these results to justify the definition of the fixed-point formulation of dPMF dynamics given in Definition \ref{def:fixedpoint_int}.

\subsection{Definition of the time-inhomogeneous linear dynamics}\label{sec:defDyn}

Following \cite{TTPW}, our approach is based on representing possibly explosive dPMF dynamics $X_t=Y_{\Phi(t)}$  as time-changed versions of nonexplosive dynamics $Y_\sigma$.
A good choice for the time change function $\Phi$ is one for which the dynamics $Y_\sigma$ is simple enough to be analytically tractable.
The Poisson-like attributes of dPMF dynamics suggest defining $\Phi$ implicitly as the integral function of the drift:
\begin{eqnarray}\label{eq:Phi}
t \mapsto \Phi(t) = \nu t+ \lambda F(t) \, .
\end{eqnarray}
Such a definition equates blowups/synchronous events with singularities/discontinuities in the time change $\Phi$, while allowing for the corresponding time-changed dynamics $Y_\sigma$ to always be devoid of blowups.
As a result, showing the existence and uniqueness of a dPMF dynamics $X_t$ reduces to showing the existence and uniqueness of a time change $\Phi$ satisfying relation \eqref{eq:Phi}.
We shall seek such time changes $\Phi$ in a set of candidate time changes denoted by $\mathcal{T}$.
To define $\mathcal{T}$, we utilize the fact that the cumulative function $F: \mathbbm{R}^+ \to \mathbbm{R}^+$ featured in \eqref{eq:Phi} can be  safely assumed to be a nondecreasing function.
This leads to the following definition:

\begin{definition}\label{def:Phi}
The set of candidate time change $\mathcal{T}$ is the set of \emph{c\`adl\`ag} functions $\Phi:[-\epsilon, \infty)\to[\xi_0, \infty)$, such that their difference quotients are lower bounded by $\nu$: for all $y,x \leq -\epsilon$, $x \neq y$, we have
\begin{eqnarray}
w_\Phi(y,x)=\frac{\Phi(y)-\Phi(x)}{y-x} \geq \nu \, .  \nonumber
\end{eqnarray}
\end{definition}

As alluded to in the introduction, it is actually most convenient to consider dPMF dynamics as parametrized by the inverse time change:

\begin{definition}\label{def:Psi}
Given a time change $\Phi$ in $\mathcal{T}$, the inverse time change $\Phi^{-1}:[\xi_0,\infty) \to [-\epsilon,\infty)$ is defined as the continuous function 
\begin{eqnarray}
\sigma \mapsto \Psi(\sigma)=\Phi^{-1}(\sigma) = \inf \left\{ t \geq 0 \, \vert \, \Phi(t) > \sigma \right\} \, .  \nonumber
\end{eqnarray}
\end{definition}

In order to specify the time changed dynamics $Y_\sigma$, we further need to introduce the so-called backward-delay function $\eta$, which represents the time-wrapped version of the refractory period $\epsilon$:

\begin{definition}\label{def:Eta}
Given a time change $\Phi$ in $\mathcal{T}$, we define the corresponding backward-delay function $\eta:[0,\infty) \to \mathbbm{R}^+$ by
\begin{eqnarray}
\eta(\sigma) = \sigma - \Phi \left( \Psi(\sigma) - \epsilon \right) \, , \quad \sigma \geq 0 \, .  \nonumber
\end{eqnarray}
We denote  the set of backward functions $\lbrace \eta[\Phi]\rbrace_{\Phi \in \mathcal{T}}$ by $\mathcal{W}$.
\end{definition}

As $w_\Phi \geq \nu$ for all $\Phi$ in $\mathcal{T}$, it is clear that for all $\eta$ in $\mathcal{W}$, we actually have $\eta  \geq \nu \epsilon$, so that all delays are bounded away from zero.
%
Time-wrapped-delay function $\eta$ in $\mathcal{W}$  will serve to parametrize the time-changed dynamics obtained via $\Phi$ in $\mathcal{T}$.
In \cite{TTPW},  we showed that these time-changed dynamics are that of a modified Wiener process $Y_\sigma$ with negative unit drift, inactivation on the zero boundary, and reset at $\Lambda$ after a refractory period specified by $\eta$.
Consequently, we defined time-changed dynamics as the processes $Y_\sigma$ solutions to the following stochastic evolution:

\begin{definition}\label{def:Ysigma}
Denoting the canonical Wiener process by $W_\sigma$, we define the time-changed processes $Y_\sigma$ as solutions to the stochastic evolution
\begin{eqnarray}\label{eq:Ysigma}
Y_\sigma = -\sigma+ \int_0^\sigma \mathbbm{1}_{\{Y_{\xi^-}>0\}}\, \dd W_\xi + \Lambda N_{\sigma-\eta(\sigma)} \, ,\quad \mathrm{with} \quad N_\sigma = \sum_{n>0} \mathbbm{1}_{[\xi_0,t]}(\xi_n) \, ,
\end{eqnarray}
 where the process $N_\sigma$ counts the successive first-passage times $\xi_n$ of the process $Y_\sigma$ to the absorbing boundary:
\begin{eqnarray}
\xi_{n+1} = \inf \left\{  \sigma >0  \, \big  \vert \, \sigma-\eta(\sigma) > \xi_n, Y_\sigma  \leq 0 \right\}  \, .  \nonumber
\end{eqnarray}
\end{definition}

A time-changed process $Y_\sigma$ is uniquely specified by imposing elementary initial conditions, which take an alternative formulation: either the process is active $Y_0=x>0$ and $N_0=0$, either the process has entered refractory period at some earlier time $\xi$ so that $Y_0=0$ and $N_\sigma=\mathbbm{1}_{\sigma \geq \xi}$ for $\xi_0 \leq \sigma < 0$.
Generic initial conditions are given by considering that $(x,\xi)$ is sampled from some probability distribution on $\{ (0,\infty) \times \{ 0 \} \} \cup \{ \{ 0 \} \times [\xi_0,0) \}$.
In all generality, this amounts to choosing a normalized pair of distributions $(q_0,g_0)$  in $\mathcal{M}((0,\infty)) \times \mathcal{M}([\xi_0,0))$. 
Given generic initial conditions  $(q_0,g_0)$  in $\mathcal{M}((0,\infty)) \times \mathcal{M}([\xi_0,0))$, the dynamics defined by \ref{def:Ysigma} is well posed as long as the backward-delay function $\eta \geq \nu \epsilon$ is locally bounded, which is always the case for valid time change $\Phi$.
Moreover, the corresponding density function $(\sigma, x) \mapsto q(\sigma, x)= \dd \Prob{0<Y_\sigma \leq x} /\dd x $ must solve the PDE problem defined by \eqref{eq:qPDE_int} and \eqref {eq:abscons_int} in the introduction.
To establish the existence and uniqueness of global explosive solutions, we will actually consider a restricted class of initial conditions, referred to as natural conditions.
We postpone the introduction of this so-called notion of natural initial conditions to Section \ref{sec:apriori}.

Interestingly, interactions are entirely mediated by the time-delayed counting process $N_{\sigma-\eta(\sigma)}$ in the linear time-changed dynamics  \eqref{eq:Ysigma}.
This motivates introducing the so-called backward-time function $\sigma \mapsto \xi(\sigma)$, which marks the inactivation time of the processes being reset at time $\sigma$, and the associated forward-time function $\sigma \mapsto \tau(\sigma)$, which marks the reset time of the processes inactivated at time $\sigma$.

\begin{definition}\label{def:xitauDef}
Given a time change $\Phi$ in $\mathcal{T}$, we define the backward-time function by 
\begin{eqnarray}
\xi: \mathbbm{R}^+ \to [\xi_0,\infty ) \, , \quad \xi(\sigma) =  \sigma-\eta(\sigma) = \Phi \left( \Psi(\sigma) - \epsilon \right) \, ,  \nonumber
\end{eqnarray}
and the forward-time function  by 
\begin{eqnarray}\label{eq:tauDef}
\tau:  [\xi_0,\infty ) \to \mathbbm{R}^+ \, , \quad \tau(\sigma) = \inf \{ \tau > 0 \, \vert \, \xi(\tau) = \tau-\eta(\tau) \geq \sigma \} \, .
\end{eqnarray}
\end{definition}

When unambiguous, we will denote these functions by $\xi$ and $\tau$ refer to them as the ``functions $\xi$ and $\tau$'' to differentiate from when $\xi$ and $\tau$ play the role of real variables.
By construction,  the function $\xi$ and $\tau$ are nondecreasing \emph{c\`adl\`ag} and nondecreasing \emph{c\`agl\`ad} functions, respectively. 
In particular, both functions can admit discontinuities and flat regions.
In \cite{TTPW}, we justify the definition of the function $\tau$ as the left-continuous inverse of the function $\xi$ with a detailed analysis of the delayed reset mechanism in the dynamics  \eqref{eq:Ysigma}.
We also show there that if we denote the sequence of inactivation times by $\{  \xi_k \}_{k \geq 0}$, then the corresponding sequences of reset times $\{  \tau_k \}_{k \geq 0}$ satisfies $\tau_k=\tau(\xi_k)$, $k \geq 0$.

\subsection{Results from time-inhomogeneous renewal analysis}\label{sec:renew}

Given a backward-delay function $\eta$ in $\mathcal{W}$, one can consider the PDE problem defined by \eqref{eq:qPDE_int} and \eqref {eq:abscons_int} independently of the requirement that relation \eqref{eq:Phi} be satisfied.
Then, the main hurdle to solving this PDE problem is due to the presence of an inhomogeneous, delayed reset term featuring the cumulative flux function $G$.
Luckily, assuming the cumulative function $G$ known allows one to express the full solution of the inhomogeneous PDE in terms of its homogeneous solutions.
These homogeneous solutions are known in closed form~\cite{Karatzas}:
\begin{eqnarray}\label{eq:kappaDef}
\kappa(\sigma,y,x) = \frac{e^{-\frac{(y - x +  \sigma)^2}{2\sigma}}}{\sqrt{2 \pi \sigma}}  \left(1-e^{ -\frac{2 x y}{\sigma}} \right) \, .
\end{eqnarray}
Applying Duhamel's principle yields the following integral representation for the density function $(\sigma,x) \mapsto q(\sigma,x)=\dd \Prob{0<Y_\sigma \leq x}$
\begin{eqnarray}\label{eq:DuHamel2}
q(\sigma,x) =  \int_0^\infty \kappa(\sigma,y,x) q_0(x) \, \dd x  + \int_0^\sigma \kappa(\sigma-\tau,x,\Lambda) \, \dd G(\xi(\tau))   \, ,
\end{eqnarray}
where $\xi=\mathrm{id}-\eta$ and where $G$ remains to be determined. 
The above integral representation shows that $G$ is the key determinant parametrizing the time-changed process $Y_\sigma$.

Just as $Y_\sigma$,  $G$ only depends on the backward-delay functions $\eta$, or equivalently on the functions $\xi$ or $\tau$.
These dependences can be made explicit by adapting results from renewal analysis~\cite{TTPW}.
Informally, this follows from writing the cumulative flux function as the uniformly convergent series
\begin{eqnarray}\label{eq:Ftx}
G( \sigma)  = \Exp{\sum_{k=1}^{\infty}  \mathbbm{1}_{ \left\{ \xi_k <  \sigma \right\} }  } = \sum_{k=1}^{\infty} \Prob{ \xi_k \leq  \sigma }  \, ,
\end{eqnarray}
and realizing that for all $k \geq 1$, $\Prob{ \xi_k \leq  \sigma}$ can be expressed in terms of the first-passage time cumulative distribution~\cite{Karatzas}:
\begin{eqnarray}\label{eq:HFPT}
H(\sigma,x)  
= \frac{\mathbbm{1}_{\{\sigma \geq 0 \}}}{2} 
\left( 
\mathrm{Erfc} \left( \frac{x-\sigma}{\sqrt{2 \sigma}}\right) + e^{2 x} \mathrm{Erfc} \left( \frac{x + \sigma}{\sqrt{2 \sigma}}\right) 
\right) \, .
\end{eqnarray}
Specifically, considering the initial condition $q_0=\delta_x$ for simplicity,  we have
\begin{eqnarray}
 \Prob{ \xi_k \leq  \sigma  \, \vert \, Y_0=x} = H^{(k)}(\sigma, x) \, ,  \nonumber
\end{eqnarray}
where  the functions $H^{(k)}(\sigma, x)$, $k \geq 1$, are defined as inhomogeneous iterated convolution:
\begin{eqnarray}
H^{(k)}(\sigma, x) = \int_0^\infty H(\sigma-\tau(\xi)) \, \dd H^{(k-1)}(\xi, x) \, , \quad H^{(1)}(\sigma, x) = H(\sigma,x) \, . \nonumber
\end{eqnarray}
These iterated functions $H^{(k)}(\sigma, x)$ take a particularly simple form for constant backward-delay function $\eta=d=-\xi_0$.
Indeed, by divisibility of the first-passage distributions
\begin{eqnarray}
\Prob{\xi_k \leq \xi \, \vert \, Y_0=x} = H(\xi -(k-1)d, x+(k-1)\Lambda) \, ,  \nonumber
\end{eqnarray}
so that for the elementary initial condition $q_0=\delta_x$, the cumulative flux function is
\begin{eqnarray}
G_d(\sigma,x)= \sum_{k=1}^\infty H(\xi -(k-1)d, x+(k-1)\Lambda)  \, .  \nonumber
\end{eqnarray}

Due to the  quasi-renewal character of the dynamics $Y_\sigma$, the cumulative flux $G$ admits an alternative characterization in terms of an integral equation.
Because of the presence of inhomogeneous delays, this integral equation differs from standard convolution-type renewal equations.
Specifically, we show in~\cite{TTPW} that for generic initial conditions:

\begin{proposition}\label{prop:Renewal}
Given a backward-delay function $\eta$ in $\mathcal{W}$, the cumulative flux function $G$  is the unique solution of the renewal-type equation:
\begin{eqnarray}\label{eq:DuHamel}
G(\sigma) = \int_0^\infty H(\sigma,x) q_0(x) \, \dd x  + \int_0^\sigma H(\sigma-\tau,\Lambda) \, \dd G(\xi(\tau))   \, .
\end{eqnarray}
\end{proposition}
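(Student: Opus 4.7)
The plan is to establish the renewal-type equation by combining the series representation of $G$ noted in (\ref{eq:Ftx}) with a Markov-type decomposition at the first reset time, and then to prove uniqueness by exploiting the strict temporal lag imposed by the backward-delay bound $\eta \geq \nu\epsilon > 0$.

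For existence, I would start from the absolutely convergent series $G(\sigma) = \sum_{k \geq 1} \Prob{\xi_k \leq \sigma}$ and isolate the $k=1$ contribution. Conditionally on $Y_0 = x$, the law of $\xi_1$ is the first-passage distribution $H(\cdot, x)$, so averaging over the initial density $q_0$ produces exactly the inhomogeneous term $\int_0^\infty H(\sigma, x) q_0(x)\, \dd x$. For each $k \geq 2$, I would condition on the preceding inactivation time $\xi_{k-1}$ and invoke the independence of successive absorbed-Brownian excursions to write
\begin{eqnarray*}
\Prob{\xi_k \leq \sigma} = \Exp{H\!\left(\sigma - \tau(\xi_{k-1}), \Lambda\right) \mathbbm{1}_{\{\tau(\xi_{k-1}) \leq \sigma\}}},
\end{eqnarray*}
since past the reset time $\tau_{k-1} = \tau(\xi_{k-1})$ the process starts afresh at $\Lambda$ and its time-to-inactivation is independent with distribution $H(\cdot, \Lambda)$. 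Summing over $k \geq 2$ by monotone convergence and interchanging expectation with the series identifies the tail with $\int H(\sigma - \tau(\xi), \Lambda)\, \dd G(\xi)$. A Stieltjes change of variable $\tau = \tau(\xi)$, legitimate because $\tau$ is the left-continuous inverse of the nondecreasing \emph{c\`adl\`ag} function $\xi$, then recasts this integral in the announced form.

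For uniqueness, let $G_1, G_2$ be two solutions of (\ref{eq:DuHamel}) sharing the same initial data $G_0$ on $[\xi_0, 0)$, and set $\Delta = G_1 - G_2$, which vanishes on $[\xi_0, 0)$ and satisfies $\Delta(\sigma) = \int_0^\sigma H(\sigma-\tau, \Lambda)\, \dd\Delta(\xi(\tau))$ for $\sigma \geq 0$. Because $\xi(\tau) \leq \tau - \nu\epsilon$ uniformly, the right-hand side depends on $\Delta$ only through its values on $[\xi_0, \sigma - \nu\epsilon]$. For $\sigma \in [0, \nu\epsilon]$ this set lies in $[\xi_0, 0)$ where $\Delta \equiv 0$, so $\Delta \equiv 0$ on $[0, \nu\epsilon]$; a straightforward induction propagates this to $[0, n\nu\epsilon]$ for every $n$, and hence to $\mathbbm{R}^+$.

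The hardest step is expected to be the Stieltjes change of variable $\tau \leftrightarrow \xi(\tau)$, because $\xi$ can exhibit jump discontinuities matched by flat plateaus of its inverse $\tau$---precisely during blowup episodes, when many inactivations accumulate at a single instant of original time while remaining spread in the time-changed coordinate. One must carefully pair the jumps of $\xi$ with the flat regions of $\tau$ (and conversely) so that the pushforward measure $\dd G(\xi(\tau))$ correctly encodes the reset rate at time $\tau$ rather than the inactivation rate at $\xi(\tau)$. A closely related subtlety is to justify the excursion-level independence through blowup episodes, where the time-changed coordinate continues to resolve inactivations that would collapse in physical time.
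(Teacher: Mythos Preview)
The paper does not actually prove this proposition here: it is quoted from \cite{TTPW}, with the surrounding discussion in Section~\ref{sec:renew} providing only an informal derivation via the series representation \eqref{eq:Ftx} and the iterated convolutions $H^{(k)}$. Your existence argument is precisely the formalization of that sketch---decompose $G$ as $\sum_k \Prob{\xi_k \leq \sigma}$, peel off $k=1$, and use the strong Markov property at each reset to express the tail as an integral against $\dd G$---so you are following the intended line.

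Your uniqueness argument via the uniform lag $\eta \geq \nu\epsilon$ is clean and correct, and in fact simpler than a Gr\"onwall-type contraction: the strict delay makes the equation explicitly causal in steps of $\nu\epsilon$. You are right that the Stieltjes change of variable $\xi \leftrightarrow \tau(\xi)$ is the delicate point, since $\xi$ may jump and $\tau$ may be flat; the paper's discussion around Definition~\ref{def:xitauDef} (that $\tau$ is the left-continuous inverse of the \emph{c\`adl\`ag} $\xi$, and that reset times satisfy $\tau_k = \tau(\xi_k)$) is exactly the ingredient needed, and the full justification is again deferred to \cite{TTPW}.
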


As a solution to \eqref{eq:DuHamel}, it is clear that independent of the functions $\eta$, $\xi$, $\tau$, the cumulative function $G$ inherits all the regularity properties of $\sigma \mapsto H( \sigma, \Lambda)$, i.e., $G$ must be a smooth function for $\sigma>0$.
In particular, one can differentiate \eqref{eq:DuHamel} with respect to $\sigma$ and obtain a new renewal-type equation for the instantaneous flux $g$:
\begin{eqnarray}\label{eq:gRenew}
g(\sigma) 
= 
\int_0^\infty h(\sigma,x) q_0(x) \, \dd x
+
\int_0^\sigma  h(\sigma-\tau,\Lambda) \, \dd G( \xi(\tau)) \, ,
\end{eqnarray}
and where $h(\cdot, x)$ denotes the density function of the cumulative function $H(\cdot, x)$:
\begin{eqnarray}\label{eq:hFPT}
h(\sigma,x) = \partial_\sigma H(\sigma,x)  = \mathbbm{1}_{\{\sigma \geq 0 \}}\frac{xe^{-(x -\sigma)^2/2 \sigma}}{\sqrt{2 \pi \sigma^3}}  \, .
\end{eqnarray}
Alternatively, \eqref{eq:gRenew}  can be directly obtained from differentiating \eqref{eq:DuHamel2} with respect to $x$ in zero and using that fact that $h(\sigma,x) = \partial_y \kappa(\sigma, 0,x)/2$.
In \cite{TTPW}, renewal-type equations such as \eqref{eq:DuHamel} and \eqref{eq:gRenew} play a crucial role in showing the local existence of explosive dPMF solutions.
Here, these will provide us with \emph{a priori} estimates that are useful to globally define explosive dPMF solutions over the whole half-line $\mathbbm{R}^+$.

Finally, for constant delay, the renewal-type equation \eqref{eq:DuHamel}  turned into an time-homogeneous equation by a change of variable:
\begin{eqnarray}\label{eq:dDuHamel}
G(\sigma) = \int_0^\infty H(\sigma,x) q_0(x) \, \dd x  + \int_0^\sigma H(\sigma-\tau-d,\Lambda) \, \dd G(\tau)   \, .
\end{eqnarray}
Deriving the above equation with respect to $\sigma$ leads to a convolution equation for the the flux $g_d=\partial_\sigma G_d$.
This equation can be solved algebraically in the Laplace domain yielding:
\begin{eqnarray}
 \hat{g}_d(u,x) = \int_0^\infty g_d(\sigma,x) e^{-u \sigma}\, d\sigma =  \frac{e^{x \left( 1-\sqrt{1+2u} \right)-du}}{1-e^{\Lambda \left( 1-\sqrt{1+2u} \right)-du}} \, . \nonumber
\end{eqnarray}
Finally, the above explicit expression allows one to evaluate the long-term behavior of the flux as $\lim_{\sigma \to \infty} g_d(\sigma,x) = \lim_{u\to0}  u\hat{g}_d(u,x)$~\cite{Feller:1941}, which gives the following useful result:

\begin{proposition}\label{prop:gflux}
For constant delay $d$, the flux function $g_d$ admit a limit that is independent of the initial conditions:
\begin{eqnarray}
\lim_{\sigma \to \infty} g_d(\sigma) =\frac{1}{\Lambda+d} \, . \nonumber
\end{eqnarray}
\end{proposition}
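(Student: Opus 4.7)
The strategy is to exploit the explicit Laplace-domain expression for $\hat g_d(u,x)$ that has already been derived just above the statement, and then to invoke the Tauberian-type identity $\lim_{\sigma\to\infty} g_d(\sigma,x) = \lim_{u\to 0^+} u \hat g_d(u,x)$ referenced via \cite{Feller:1941}. The claim will reduce to a short asymptotic expansion of the denominator.

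First, I would treat the elementary initial condition $q_0=\delta_x$, for which
\begin{eqnarray*}
u\hat g_d(u,x) \;=\; \frac{u\, e^{x(1-\sqrt{1+2u}) - du}}{1 - e^{\Lambda(1-\sqrt{1+2u}) - du}} \, .
\end{eqnarray*}
As $u\to 0^+$, the numerator tends to $0$ like $u$, so I would isolate the leading behaviour of the denominator by expanding $\sqrt{1+2u} = 1 + u - u^2/2 + O(u^3)$, whence
\begin{eqnarray*}
\Lambda\bigl(1-\sqrt{1+2u}\bigr) - du \;=\; -(\Lambda+d)u + O(u^2) \, ,
\end{eqnarray*}
and consequently $1 - e^{\Lambda(1-\sqrt{1+2u})-du} = (\Lambda+d)u + O(u^2)$. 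The numerator meanwhile converges to $1$, so that $\lim_{u\to 0^+} u\hat g_d(u,x) = 1/(\Lambda+d)$, independent of $x$. This is the crux of the computation and is essentially the only nontrivial algebraic step.

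Next I would extend to the generic normalized initial condition $(q_0,g_0)\in\mathcal{M}((0,\infty))\times\mathcal{M}([\xi_0,0))$. By linearity of the renewal equation \eqref{eq:dDuHamel} in the inhomogeneous data, the flux decomposes as a superposition $g_d(\sigma) = \int_0^\infty g_d(\sigma,x)\,q_0(\mathrm{d}x) + (\text{contribution of the inactive mass } g_0)$, with the inactive contribution being a translate of the $q_0=\delta_\Lambda$ flux by an amount at most $d$. Because $x\mapsto \hat g_d(u,x)$ is bounded uniformly in $x$ on bounded intervals of $u\in(0,u_0]$, I can justify interchanging the $u\to 0^+$ limit with the $q_0$-integration via dominated convergence (using that $|u\hat g_d(u,x)|$ stays uniformly bounded near $u=0$). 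Each contribution then produces the same limit $1/(\Lambda+d)$, and summing with total initial mass $1$ yields the claim.

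The main obstacle is the rigorous application of the Tauberian step: merely having $\lim_{u\to 0^+} u\hat g_d(u,x) = 1/(\Lambda+d)$ does not automatically give a pointwise/asymptotic limit for $g_d(\sigma)$ without a regularity hypothesis such as eventual monotonicity or direct Riemann integrability of $g_d$. I would supply this by noting that $g_d$ is the density of a delayed renewal measure with inter-arrival law having a smooth density (namely the convolution of the first-passage density $h(\cdot,\Lambda)$ with the delta at $d$), so that Smith's key renewal theorem applies and the limit of $g_d$ coincides with the reciprocal of the mean inter-arrival time $\Lambda+d$. This argument both legitimizes the Abelian/Tauberian passage and makes the independence from initial conditions transparent, since the key renewal theorem delivers a limit determined solely by the inter-arrival distribution.
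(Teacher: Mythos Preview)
Your proposal is correct and follows essentially the same approach as the paper: the paper derives the Laplace transform $\hat g_d(u,x)$, invokes the identity $\lim_{\sigma\to\infty} g_d(\sigma,x)=\lim_{u\to 0} u\hat g_d(u,x)$ with a citation to Feller, and states the result. Your write-up simply fills in the algebraic expansion of the denominator and adds a more careful discussion of the Tauberian justification and of the passage to general initial conditions, which the paper leaves implicit.
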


Observe that the above result is valid for $d=0$, in which case  $\lim_{\sigma \to \infty} g(\sigma)=1/\Lambda>1/\lambda$ as soon as $\lambda>\Lambda$.
This shows that for zero refractory period $\epsilon$ and for 
$\lambda > \Lambda$, a blowup must occur in finite time.
However, it is unclear how to resolve the ensuing blowup episode, since defining spiking avalanches can be ambiguous when $\epsilon=0$.
We resolve this ambiguity by viewing PMF dynamics as the limit of dPMF dynamics. That is, we define PMF by formally setting $\epsilon=0$ in the 
dPMF fixed-point problem. We then show that, for large enough interaction
parameter $\lambda > \Lambda$, the resulting PMF dynamics are the limit of dPMF dynamics as $\epsilon \to 0^+$. 


\subsection{Fixed-point problem and local solutions}\label{sec:fixedPoint}

Renewal analysis allows one to prove the existence, uniqueness, and regularity of the time-changed dynamics $Y_\sigma$ assuming the backward-delay function $\eta$ known.
However, $\eta$ is actually an unknown of the problem for being ultimately defined in terms of the time change $\Phi$ via Definition \ref{def:Eta}.
Moreover,  the time-changed dynamics $Y_\sigma$ exists for any $\eta$ in $\mathcal{W}$, whereas given some  initial conditions, we expect that a unique time change $\Phi$ in $\mathcal{T}$ parametrizes a dPMF dynamics.
In \cite{TTPW}, we characterize this unique time change $\Phi$ by imposing that the defining relation \eqref{eq:Phi} be satisfied.
Observing that $F$ is the cumulative flux of $X_t=Y_\Phi(t)$, relation \eqref{eq:Phi} can be naturally reformulated as:
\begin{eqnarray}\label{eq:sigmaTime}
\Phi(t) = \nu t + \lambda F(t) =\nu t + \lambda G(\Phi(t)) \quad \Leftrightarrow \quad \nu t = \Phi(t) - \lambda G(\Phi(t))  \, .
\end{eqnarray}

Assuming $G$ known, one can specify the sought-after time change $\Phi(t)=\sigma$ by solving $\nu t = \sigma - \lambda G(\sigma)$ for $\sigma$ given arbitrary values of $t \geq 0$.
However, one has to be mindful about the existence and multiplicity of solutions.
By construction, we have $G(0)=0$, so that we can always set $\Phi(0)=0$.
Then, we can uniquely specified $\Phi(t)=\sigma$  for increasing value of $t$ as long as $\sigma \mapsto \sigma - \lambda G(\sigma)$ is an increasing, one-to-one function.
Let us consider the smallest time  $S_1$ after which the smooth function $\sigma \mapsto \sigma - \lambda G(\sigma)$ fails to be  an increasing, one-to-one function.
Then, $\Phi(t)$ is uniquely defined within the range $[0,S_1)$, or rather the inverse time change $\Psi=\Phi^{-1}$ is uniquely defined as 
\begin{eqnarray}\label{eq:Psi1}
 \Psi(\sigma) = (\sigma - \lambda G(\sigma))/\nu \, , \quad \forall \sigma  \; \in \; [0, S_1)\, .
\end{eqnarray}

If $S_1$ is finite, we must have $g(S_1)=1/\lambda$ so that $S_1$ is a blowup trigger time as defined in \eqref{eq:blowup_int}, and setting $T_1=\Psi(S_1)$, the equation $\nu T_1 = \sigma - \lambda G(\sigma)$ must admit multiple solutions for $\sigma$.
This indicates that $\Phi$  has a jump discontinuity in $T_1$, with $\Phi(T_1)=U_1>S_1=\Phi(T_1^-)$.
To be consistent, $U_1$ shall be the smallest solution such that  $U_1>S_1$ and $\sigma \mapsto \sigma - \lambda G(\sigma)$ is increasing, one-to-one in the right vicinity of $U_1$.
This corresponds to defining $U_1$ as an blowup exit time as in \eqref{eq:exit_int} and imposing that 
\begin{eqnarray}\label{eq:Psi2}
 \Psi(\sigma) = \Psi(S_1)  \, , \quad \forall \sigma  \; \in \; [S_1, U_1)\, .
\end{eqnarray}

In principle, one can hope to continue defining the time change value $\Phi(t)$ by extending the same reasoning for times $t \geq T_1$, leading to defining an intertwined sequence of blowup trigger times $\{ S_k \}_{k \geq 1}$ and blowup exit time $\{ U_k\}_{k \geq 1}$, with $U_1 \leq S_1 \leq U_2 \leq S_2 \ldots$. 
Such a construction produces a global solution over $\mathrm{R}^+$ if the resulting sequences are devoid of accumulation points: $\lim_{k \to \infty} S_k=\lim_{k \to \infty} U_k=\infty$.
In this work, we are primarily concerned with showing that the latter limits hold for large enough interactions and natural initial conditions so that there are no caveats due to accumulation of blowup times.

However, one can specify the inverse time change $\Phi$ independent of the possible accumulation of blowup times.
Following on the informal discussion above, this implicit specification of $\Phi(t)=\sigma$ as a solution $\nu t = \sigma - \lambda G(\sigma)$ is best formulated in terms of the inverse time change $\Psi$ featuring in \eqref{eq:Psi1} and \eqref{eq:Psi2}.
Then to circumvent possible caveats of accumulating blowup times, we leverage the fact that $\Psi$ must be a nondecreasing function to write relation \eqref{eq:Phi} as a fixed-point problem, whereby $\Psi$ is defined as a running maximum function.
The fixed-point nature of $\eqref{eq:Phi}$ naturally follows from the fact that the cumulative function $G=G[\Psi]$ also depends on $\Psi$ via $\eta=\eta[\Phi]=\eta[\Psi]$.
Specifically, we show in \cite{TTPW} that:

\begin{proposition}\label{prop:fixedPoint}
The inverse time-change $\Psi$ parametrizes a dPMF dynamics if and only if it solves the fixed-point problem
\begin{eqnarray}\label{eq:fixedPoint}
\forall \; \sigma \geq 0 \, , \quad \Psi(\sigma) = \sup_{0 \leq \xi \leq \sigma} \big( \xi - \lambda G[\Psi](\xi)\big) / \nu \, .
\end{eqnarray}
\end{proposition}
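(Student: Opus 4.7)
The plan is to establish the equivalence by converting the relation $\Phi(t) = \nu t + \lambda G(\Phi(t))$ into a statement about $\Psi = \Phi^{-1}$ and then showing that this statement is equivalent to the running-maximum fixed-point formulation. Throughout, I will exploit that $\Psi$ is continuous and nondecreasing while $\Phi$ is \emph{c\`adl\`ag} and nondecreasing, with $\Phi$ the right-continuous inverse of $\Psi$; flat intervals of $\Psi$ correspond to jump discontinuities of $\Phi$, i.e., blowup episodes.

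For the forward direction, I would first work on the complement of the flat intervals. On this open set, $\Phi\circ\Psi=\mathrm{id}$, so evaluating the relation $\Phi(t)=\nu t+\lambda G(\Phi(t))$ at $t=\Psi(\sigma)$ directly yields $\nu\Psi(\sigma)=\sigma-\lambda G(\sigma)$. Next I would treat a maximal flat interval $[S_k,U_k]$ of $\Psi$ at value $T_k$: by continuity of $\Psi$ and the defining relation applied at $t=T_k$ with $\Phi(T_k^-)=S_k$ and $\Phi(T_k)=U_k$, we have $\nu T_k=S_k-\lambda G(S_k)=U_k-\lambda G(U_k)$. On the interior $(S_k,U_k)$, the backward-delay function $\eta$ inherited from $\Psi$ satisfies $\eta(\sigma)=\eta(S_k)+(\sigma-S_k)$, which kills the reset term $\dd G(\xi(\tau))$ in the renewal-type equation of Proposition \ref{prop:Renewal}; consequently $G$ restricted to $[S_k,U_k]$ is the cumulative first-passage flux of a reset-less Wiener dynamics, and the defining blowup property $g(\sigma)>1/\lambda$ on $(S_k,U_k)$ forces $\sigma-\lambda G(\sigma)<\nu T_k$ strictly there. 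Combining these three observations with monotonicity of $\Psi$ gives $\xi-\lambda G(\xi)\leq\nu\Psi(\xi)\leq\nu\Psi(\sigma)$ for all $\xi\leq\sigma$, with equality realized at $\xi=\sigma$ (nonflat case) or at $\xi=S_k$ (flat case), so that the running supremum equals $\nu\Psi(\sigma)$.

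For the reverse direction, assume $\Psi$ solves \eqref{eq:fixedPoint} and set $\Phi$ to be its right-continuous inverse in the sense of Definition \ref{def:Psi}. Since $\Psi$ is the sup of continuous functions shifted by a nondecreasing control, it is automatically continuous and nondecreasing, so $\Phi$ is \emph{c\`adl\`ag} with difference quotients bounded below by $\nu$ (inherited from the $\nu$ appearing in the denominator of \eqref{eq:fixedPoint}), placing $\Phi\in\mathcal{T}$. For any $t\geq 0$, set $\sigma=\Phi(t)$; I claim $\sigma$ is a maximizer of $\xi\mapsto\xi-\lambda G(\xi)$ on $[0,\sigma]$. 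Indeed, for $\sigma'>\sigma$ arbitrarily close to $\sigma$, the definition of right-continuous inverse gives $\Psi(\sigma')>t$, and \eqref{eq:fixedPoint} forces the supremum over $[0,\sigma']$ to exceed $\nu t$; letting $\sigma'\downarrow\sigma$ and using that $\Psi$ is continuous at $\sigma$ shows $\nu\Psi(\sigma)=\sigma-\lambda G(\sigma)$ must hold. Since $\Psi(\Phi(t))\geq t$ with equality outside the flat intervals, and the endpoints of flat intervals also satisfy the identity by continuity, one recovers $\nu t=\Phi(t)-\lambda G(\Phi(t))$, i.e., relation \eqref{eq:sigmaTime}.

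The main obstacle I expect is the handling of the flat intervals $[S_k,U_k]$ in both directions. In the forward step, verifying the strict inequality $\sigma-\lambda G(\sigma)<\nu T_k$ on the interior requires leveraging the renewal representation of $G$ under the specific form $\eta(\sigma)-\eta(S_k)=\sigma-S_k$ that $\Psi$ being flat imposes, and checking that no reset mass enters $G$ during the blowup. In the reverse direction, the subtlety is that $\Phi$ has jumps exactly at those $t$ for which the set $\{\xi\geq 0:\Psi(\xi)>t\}$ is not contained in a neighborhood of its infimum, so one must carefully argue the maximizer property of $\sigma=\Phi(t)$ using only the fixed-point equation and the continuity of $\Psi$, and then verify the compatibility relations $G[\Psi]$ implicitly brings through the coupled definition of $\eta$ in Definition \ref{def:Eta}. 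Once these two points are settled, both implications close cleanly.
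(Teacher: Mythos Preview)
The paper does not prove this proposition in the present text; it is quoted from the companion paper \cite{TTPW}. What the paper does offer is the heuristic discussion in Section~\ref{sec:fixedPoint} immediately preceding the statement, which derives the running-maximum form from the relation $\nu t=\sigma-\lambda G(\sigma)$ by observing that between blowups $\Psi(\sigma)=(\sigma-\lambda G(\sigma))/\nu$, while on a blowup interval $[S_k,U_k]$ the same expression dips below $\nu T_k$ and $\Psi$ is held constant at $T_k$. Your forward direction reproduces exactly this picture, and your reverse direction supplies the converse that the paper leaves to \cite{TTPW}; in that sense your proposal is aligned with the paper's informal argument and goes somewhat further in making both implications explicit.

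One technical point in your forward direction deserves correction. You invoke ``the defining blowup property $g(\sigma)>1/\lambda$ on $(S_k,U_k)$'' to force $\sigma-\lambda G(\sigma)<\nu T_k$ there. The premise is false: during a blowup episode the reset-less flux $g$ starts at $1/\lambda$ with positive slope (Assumption~\ref{assump3}), rises above $1/\lambda$, but then falls back below $1/\lambda$ before the exit time; its \emph{average} over $[S_k,U_k]$ is exactly $1/\lambda$, as the paper notes in the caption of Fig.~\ref{fig:PDE}. The strict inequality on $(S_k,U_k)$ instead follows directly from the definition of $U_k$ in Definition~\ref{def:exitTime_intro} as the \emph{first} time past $S_k$ at which $\sigma-\lambda G(\sigma)$ returns to the level $\nu T_k$: equivalently, $\pi_k$ is the smallest positive root of $p=G(S_k+\lambda p)-G(S_k)$, so by minimality $p<G(S_k+\lambda p)-G(S_k)$ for $p\in(0,\pi_k)$, which is the desired inequality after the substitution $\sigma=S_k+\lambda p$. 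With this repair your argument closes; you had already flagged the interior inequality as the delicate step, and this is the correct mechanism for it.
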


The full formulation of the fixed-point problem given in Definition \ref{def:fixedpoint_int} follows directly from the above proposition together with the renewal-type equation \eqref{eq:DuHamel} for the cumulative flux $G$ and Definitions \ref{def:Eta} and \ref{def:xitauDef} for the backward functions $\eta$ and $\xi$.
The fixed point problem \ref{def:fixedpoint_int} allows one to characterize the occurrence of a single blowup in the time changed-picture for generic initial conditions satisfying Assumption \ref{assump1} and under full-blowup Assumption \ref{assump3}.
This local result is stated as follows in \cite{TTPW}:

\begin{theorem}\label{th:smooth}
Under Assumption \ref{assump1}, there is a unique smooth solution $\Psi$ to the fixed-point problem \ref{def:fixedpoint_int} up to the possibly infinite time 
\begin{eqnarray}\label{eq:defS0}
S_1
=
\inf \left\{ \sigma>0 \, \big \vert \, \Psi'(\sigma)  \leq 0 \right\}   
=
\inf \left\{ \sigma>0 \, \big \vert \, g[\Psi](\sigma)  \geq 1/\lambda \right\}  > 0 \, .
\end{eqnarray}
\end{theorem}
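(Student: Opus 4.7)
I would combine a short-time fixed-point construction with a continuation argument, the only obstruction to which is the vanishing of $\Psi'$. First I note that the two characterizations of $S_1$ in \eqref{eq:defS0} are equivalent: wherever $\Psi$ is smooth and strictly increasing, the running supremum in \eqref{eq:fixedPoint} is attained at the right endpoint, so $\nu \Psi(\sigma) = \sigma - \lambda G[\Psi](\sigma)$, and differentiating yields $\nu \Psi'(\sigma) = 1 - \lambda g[\Psi](\sigma)$.

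\textbf{Local existence.} On $[-\xi_0, 0)$ the first branch of Definition \ref{def:fixedpoint_int} specifies $\Psi$ explicitly from $g_0$. For a short interval $[0, \sigma_0]$ with $\sigma_0 < \nu \epsilon$, the inverse time change satisfies $\Psi(\sigma) - \epsilon < 0$, so $\eta[\Psi](\sigma) = \sigma - \Phi(\Psi(\sigma) - \epsilon)$ depends only on the initial data and not on the unknown $\Psi$ restricted to $[0, \sigma_0]$. Hence the renewal equation \eqref{eq:DuHamel} of Proposition \ref{prop:Renewal} determines $G$ uniquely and independently of $\Psi|_{[0,\sigma_0]}$. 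Assumption \ref{assump1} yields $g(0) = \partial_x p_0(0^+)/2 < 1/\lambda$, so by continuity the map $\sigma \mapsto \sigma - \lambda G(\sigma)$ is strictly increasing on a right neighborhood of zero, and the fixed-point reduces to the explicit identity $\Psi(\sigma) = (\sigma - \lambda G(\sigma))/\nu$ on such a neighborhood, giving local existence and uniqueness.

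\textbf{Smoothness and continuation.} Because the first-passage kernel $\sigma \mapsto H(\sigma, \Lambda)$ is $C^\infty$ on $(0, \infty)$, iterating \eqref{eq:DuHamel} and \eqref{eq:gRenew} produces $G \in C^\infty$ wherever $\eta$ is smooth; since $\eta$ is smooth wherever $\Psi$ is smooth and strictly increasing, so is $\Psi = (\mathrm{id} - \lambda G[\Psi])/\nu$ by bootstrap. Let $\sigma^\star$ be the supremum of times up to which a smooth strictly increasing solution exists. The identity $\nu \Psi' = 1 - \lambda g[\Psi]$ immediately gives $\sigma^\star \leq S_1$. If $\sigma^\star < S_1$, then $g[\Psi](\sigma^\star) < 1/\lambda$ with a quantitative slack $\delta > 0$ and the dynamical state $(q(\sigma^\star, \cdot), g(\cdot - \sigma^\star))$ is smooth, so reapplying the local existence argument with these as new initial data extends $\Psi$ past $\sigma^\star$, contradicting maximality. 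Uniqueness propagates by the same local argument: two smooth solutions agreeing up to any $\sigma^\dagger < S_1$ must also agree on a right neighborhood of $\sigma^\dagger$ by uniqueness of the renewal equation.

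\textbf{Main obstacle.} The essential technical difficulty is the self-referential dependence $\Psi \mapsto \eta[\Psi] \mapsto G[\Psi] \mapsto \Psi$, since $\eta(\sigma) = \sigma - \Phi(\Psi(\sigma) - \epsilon)$ involves the inverse $\Phi = \Psi^{-1}$. On $[0, \nu\epsilon]$ this loop is broken because the delay reaches into the prescribed initial segment; beyond that, the step from $\Psi$ to $\Phi$ must be controlled in Lipschitz norm, which is precisely where the nondegeneracy $g[\Psi] \leq 1/\lambda - \delta$ enters: it furnishes the lower bound $\Psi' \geq \delta/\nu$ and hence a uniform Lipschitz constant for $\Phi$. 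The breakdown of this bound at $S_1$ is exactly what halts the continuation, and is also what creates room for the subsequent jump discontinuity of $\Phi$ that marks the blowup.
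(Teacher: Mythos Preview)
The paper does not prove this theorem here; it is quoted from \cite{TTPW} as a local result, so there is no in-paper proof to compare against. Your overall architecture---short-time existence plus continuation, with the obstruction being $g[\Psi]=1/\lambda$---matches the informal discussion preceding Proposition~\ref{prop:fixedPoint}, and the equivalence of the two descriptions of $S_1$ via $\nu\Psi'=1-\lambda g[\Psi]$ is correct.

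There is, however, a genuine gap in your local existence step. You assert that on $[0,\nu\epsilon]$ the backward-delay function $\eta[\Psi](\sigma)=\sigma-\Phi(\Psi(\sigma)-\epsilon)$ ``depends only on the initial data and not on the unknown $\Psi|_{[0,\sigma_0]}$'', and hence that the renewal equation determines $G$ independently of $\Psi$. This is not right: although $\Phi$ is evaluated on the initial segment $[-\epsilon,0)$ (where it coincides with the known $\Phi_0$ built from $\Psi_0$), the \emph{argument} $\Psi(\sigma)-\epsilon$ still involves the unknown $\Psi(\sigma)$. Thus $\xi(\tau)=\Phi_0(\Psi(\tau)-\epsilon)$ and the renewal integral $\int_0^\sigma H(\sigma-\tau,\Lambda)\,\dd G_0(\xi(\tau))$ retain a dependence on $\Psi|_{[0,\sigma]}$; the self-referential loop $\Psi\mapsto\eta\mapsto G\mapsto\Psi$ is not broken, only simplified (the outer $\Phi_0$ is known and Lipschitz). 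What the delay actually buys is that on $[0,\nu\epsilon]$ the coupling is of Volterra type with a smoothing kernel and with $G$ replaced by the known $G_0$ inside the integral, so a genuine contraction/Picard argument on a short interval---using $g(0)=\partial_x q_0(0^+)/2<1/\lambda$ from Assumption~\ref{assump1} and Lipschitz dependence of $G_0\circ\xi$ on $\Psi$---can close. But that argument must be carried out, not asserted away. Your own ``Main obstacle'' paragraph implicitly acknowledges this Lipschitz control is required beyond $\nu\epsilon$; the inconsistency is that your local-existence paragraph claims the difficulty is entirely absent on $[0,\nu\epsilon]$, when it is merely of a more tractable form there.
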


\begin{theorem}\label{th:jump}
Under Assumption \ref{assump1} and \ref{assump3}, the solution $\Psi$ to the fixed-point problem \ref{def:fixedpoint_int} can be uniquely extended as a constant function on $[S_1,U_1]$ with $U_1=S_1+\lambda \pi_1$ where $\pi_1$ satisfies $0<\pi_1<\Vert q(S_1, \cdot) \Vert_1 \leq 1$ and is defined as 
\begin{eqnarray}\label{eq:defjump}
\pi_1 = \inf \left\{ p \geq 0 \, \bigg \vert \, p > \int_{0}^\infty H(\lambda p, x )  q(S_1,x ) \, \dd x \right\} \, .
\end{eqnarray}
\end{theorem}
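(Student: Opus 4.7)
\textbf{Proof plan for Theorem \ref{th:jump}.}
The plan is to extend $\Psi$ past $S_1$ as a constant, determine the maximal length of constancy compatible with the fixed-point relation \eqref{eq:fixedPoint_int}, and then check that this extension is the unique admissible one.

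The first step is to analyze the cumulative flux $G$ on any interval $[S_1,U)$ on which $\Psi$ is assumed constant and equal to $T_1=\Psi(S_1)$. Under this assumption, the backward-delay function given by Definition \ref{def:Eta} satisfies $\eta(\sigma)=\sigma-\Phi(T_1-\epsilon)$ on $[S_1,U)$, so $\xi(\sigma)=\sigma-\eta(\sigma)=\Phi(T_1-\epsilon)$ is constant there. Consequently the Stieltjes measure $\dd G(\xi(\tau))$ charges no mass on $(S_1,U)$, and splitting the renewal representation \eqref{eq:DuHamel} at $S_1$ (or equivalently applying the Markov property of the free absorbed diffusion $Y$ starting from $q(S_1,\cdot)$ at time $S_1$) gives, for $\sigma\in[S_1,U)$,
\begin{eqnarray*}
G(\sigma)-G(S_1)=\int_0^\infty H(\sigma-S_1,x)\, q(S_1,x)\,\dd x.
\end{eqnarray*}
Continuity of $q(S_1,\cdot)$ across $S_1$ is inherited from Theorem \ref{th:smooth}, so this identity is self-consistent with the pre-blowup flux satisfying $g(S_1)=1/\lambda$.

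The second step translates the constancy of $\Psi$ into a scalar inequality. By \eqref{eq:fixedPoint_int}, $\Psi\equiv T_1$ on $[S_1,\sigma]$ if and only if $\xi-\lambda G(\xi)\le S_1-\lambda G(S_1)$ for all $\xi\in[S_1,\sigma]$, i.e.\ $G(\xi)-G(S_1)\ge(\xi-S_1)/\lambda$. Substituting the formula from Step~1 and rescaling by $p=(\sigma-S_1)/\lambda$, this is exactly
\begin{eqnarray*}
p\le \int_0^\infty H(\lambda p,x)\, q(S_1,x)\,\dd x,
\end{eqnarray*}
so the maximal constancy interval has length $\lambda\pi_1$ where $\pi_1$ is precisely the infimum appearing in \eqref{eq:defjump}. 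This identifies the exit time $U_1=S_1+\lambda\pi_1$ as the first value at which the inequality becomes an equality, after which $\sigma\mapsto\sigma-\lambda G(\sigma)$ strictly exceeds $S_1-\lambda G(S_1)$, forcing the running supremum in \eqref{eq:fixedPoint_int} to move off $T_1$.

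The third step is to verify the two bounds on $\pi_1$. The lower bound $\pi_1>0$ uses Assumption \ref{assump3}: $\partial_\sigma g(S_1)>0$ together with $g(S_1)=1/\lambda$ give $g(\sigma)>1/\lambda$ on some right neighborhood of $S_1$, so by the formula from Step~1 the integral $\int_0^\infty H(\lambda p,x)q(S_1,x)\,\dd x$ dominates $p$ for sufficiently small $p>0$, which rules $p$ out of the infimum set and yields $\pi_1>0$. The upper bound $\pi_1<\|q(S_1,\cdot)\|_1$ uses that $H(\lambda p,x)<1$ strictly for every finite $p$ and every $x>0$, so $\int_0^\infty H(\lambda p,x)q(S_1,x)\,\dd x<\|q(S_1,\cdot)\|_1$, hence taking $p=\|q(S_1,\cdot)\|_1$ already lies in the infimum set.

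Finally, uniqueness of the extension is read off from the fixed-point relation itself: on $[S_1,U_1]$ any solution must be bounded below by the constant $T_1$ (monotonicity) and bounded above by it (the running supremum of $\xi-\lambda G(\xi)$ does not exceed $S_1-\lambda G(S_1)$ by the preceding step), so constancy is forced. The main technical obstacle in this proof plan is Step~1, namely justifying that $\dd G(\xi(\tau))$ produces no contribution on $(S_1,U)$ when $\xi(\tau)$ has a plateau; this requires a careful Stieltjes/Fubini manipulation of \eqref{eq:DuHamel}, but is of the same nature as the renewal computations already carried out in \cite{TTPW}.
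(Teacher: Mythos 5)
Your proposal is correct and follows essentially the same route as the paper (and the companion work \cite{TTPW} where the full proof resides): flatness of $\Psi$ halts resets, so the increment of $G$ on $[S_1,U_1)$ reduces to the reset-free first-passage probability $\int_0^\infty H(\cdot,x)q(S_1,x)\,\dd x$, and the exit condition becomes the self-consistent crossing problem \eqref{eq:defjump}, with Assumption \ref{assump3} giving $\pi_1>0$ and $H<1$ giving the mass bound. The only cosmetic omission is that your argument for the upper bound directly yields $\pi_1\le \Vert q(S_1,\cdot)\Vert_1$; strictness follows by one additional continuity remark on $p\mapsto p-\int_0^\infty H(\lambda p,x)q(S_1,x)\,\dd x$, which is negative at $p=\Vert q(S_1,\cdot)\Vert_1$ and hence on a left neighborhood thereof.
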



We show our two main results in Section \ref{sec:global} and in Section \ref{sec:limRef}. 
In preparation of these, let us state a few properties that solutions to the fixed-point problem \ref{def:fixedpoint_int} must satisfy.
The first such property shows that for any solutions $\Psi$,  the associated backward-delay functions $\eta[\Psi]$ must be uniformly bounded.

\begin{proposition}
For all solutions $\Psi$ to the fixed-point problem \ref{def:fixedpoint_int}, the associated backward-delay functions $\eta$ are bounded by $\lambda+\nu \epsilon$.
\end{proposition}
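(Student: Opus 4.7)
The plan is to identify $\eta(\sigma)$ as bounded above by the increment of $\Phi$ over a $t$-window of length $\epsilon$, and then to control this increment using the refractoriness baked into the definition of $F$.

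First, by the standard pairing between a \emph{c\`adl\`ag} function and its right-continuous inverse, $\sigma \leq \Phi(\Psi(\sigma))$ for every $\sigma \geq 0$, with equality at continuity points of $\Phi$. Combined with Definition~\ref{def:Eta}, this yields
\begin{eqnarray*}
\eta(\sigma) \;=\; \sigma - \Phi(\Psi(\sigma) - \epsilon) \;\leq\; \Phi(\Psi(\sigma)) - \Phi(\Psi(\sigma) - \epsilon) \, .
\end{eqnarray*}
Substituting $\Phi(t) = \nu t + \lambda F(t)$, which holds for any admissible time change, the right-hand side rewrites as $\nu \epsilon + \lambda[F(\Psi(\sigma)) - F(\Psi(\sigma) - \epsilon)]$, where $F$ is extended to $[-\epsilon,0)$ through the initial data via $\Phi(-\epsilon) = \xi_0$.

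The proposition then reduces to the key estimate $F(t) - F(t - \epsilon) \leq 1$ for every $t \geq 0$. The cleanest justification is probabilistic: since $F(t) = \mathbb{E}[M_t]$ with $M_t$ the counting process in \eqref{eq:MFM}, the recursive constraint $\rho_{n+1} \geq \rho_n + \epsilon$ forces $M_t - M_{t-\epsilon} \leq 1$ almost surely, and the bound follows by taking expectations. A purely analytic variant, more in the spirit of the fixed-point framework, uses Definition~\ref{def:Ysigma}: the inactivation sequence $\{\xi_n\}$ satisfies $\xi_{n+1} - \eta(\xi_{n+1}) > \xi_n$, which after applying $\Psi$ says that no two successive inactivations lie in a common $\Psi$-window shorter than $\epsilon$; rephrased via $F = G \circ \Phi$ in $t$-coordinates, this is exactly $F(t) - F(t - \epsilon) \leq 1$.

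The main obstacle is making the analytic route robust at blowup times, where $F$ has jump discontinuities of magnitude $\pi_k$. However, since $\pi_k < 1$ by Theorem~\ref{th:jump} and a representative process can participate in at most one synchronous event within any $\epsilon$-window (its post-blowup refractoriness preventing a second inactivation), no $\epsilon$-window can accumulate more than one unit of mass, even when it straddles a blowup. Inserting this into the preceding display yields $\eta(\sigma) \leq \nu \epsilon + \lambda$ uniformly in $\sigma \geq 0$, which is the claim.
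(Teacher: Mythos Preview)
Your argument is correct and rests on the same underlying fact as the paper's proof---that the fraction of processes inactive at any instant is at most $1$---but you route through the original $t$-coordinate rather than staying in the time-changed picture. The paper works directly with the fixed-point identity: from $\Psi(\xi(\sigma)) = \Psi(\sigma) - \epsilon$ and $\nu\Psi = \mathrm{id} - \lambda G$ (at points where the running supremum is attained), one obtains $\eta(\sigma) = \nu\epsilon + \lambda\big(G(\sigma) - G(\xi(\sigma))\big)$, and then invokes conservation of probability for the time-changed dynamics, $G(\sigma) - G(\xi(\sigma)) \leq 1$. You instead use the inequality $\sigma \leq \Phi(\Psi(\sigma))$ and bound $F(t) - F(t-\epsilon)$ via the counting-process representation $F = \Exp{M_t}$ together with the pathwise refractoriness $\rho_{n+1} > \rho_n + \epsilon$. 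Since $F = G \circ \Phi$ and $\xi(\sigma) = \Phi(\Psi(\sigma) - \epsilon)$, your bound $F(t) - F(t-\epsilon) \leq 1$ is precisely the paper's $G(\sigma) - G(\xi(\sigma)) \leq 1$ read in the other coordinate. The paper's version is more self-contained within the fixed-point framework (no appeal back to the stochastic process $M_t$), whereas yours makes the refractoriness mechanism more explicit; neither buys anything the other does not.
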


\begin{proof}
Solutions $\Psi$  to the fixed-point problem \ref {def:fixedpoint_int} are necessarily continuous by smoothness of the associated cumulative function $G[\Psi]$.
Moreover by definition of the backward function $\xi$, such solutions must satisfy $\Psi(\xi(\sigma)) = \Psi(\sigma) - \epsilon$.
This implies that
\begin{eqnarray}
\nu \epsilon =  \nu \big( \Psi(\sigma)- \Psi(\xi(\sigma)) \big) = \sigma-\xi(\sigma) - \lambda \big( G(\sigma)-G(\xi(\sigma)) \big) \nonumber \, .
\end{eqnarray}
We conclude by remembering that $\eta = \mathrm{Id}-\xi$ and that by conservation of probability, we must have $G(\sigma)-G(\xi(\sigma)) \leq 1$.
\end{proof}

The second property states that global inverse time change $\Psi$  provide us with global time changes $\Phi=\Psi^{-1}: \mathbbm{R}^+ \to \mathbbm{R}^+$ for the dPMF dynamics. 
Suppose that the fixed-point problem \ref {def:fixedpoint_int} admits an inverse time change $\Psi$ as a solution.
Such a solution cannot explode in finite time as it is nondecreasing with bounded difference quotient: $w_\Psi \leq 1/\nu$.
Let us further assume that it is a global solution, i.e., that $\Psi$ is defined on the whole set $\mathbbm{R}^+$.
Then, by boundedness of the backward-delay function $\eta = \mathrm{Id}-\xi$, it must be that $S_\infty = \lim_{\sigma \to \infty} \xi(\sigma) = \infty$.
For finite refractory period $\epsilon>0$, it turns out that this observation is enough to show the function $\Phi=\Psi^{-1}$ is also a global time change, as stated in the following proposition.

\begin{proposition}\label{prop:horizon}
For $\epsilon>0$, solutions to the fixed-point problem \ref {def:fixedpoint_int} that are defined over $\mathbbm{R}^+$ necessarily satisfy $T_\infty=\lim_{\sigma \to \infty} \Psi(\sigma)=\infty$.
\end{proposition}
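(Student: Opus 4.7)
The plan is to argue by contradiction, combining the uniform upper bound on the backward-delay function given in the previous proposition with the hypothesis that the refractory period $\epsilon$ is strictly positive. Since $\Psi$ is nondecreasing on $\mathbbm{R}^+$, the limit $T_\infty = \lim_{\sigma \to \infty} \Psi(\sigma)$ always exists in $[0,\infty]$. Assume for contradiction that $T_\infty < \infty$; the goal is to derive a bound on $\xi(\sigma)$ that prevents it from diverging, contradicting the fact that $\xi(\sigma) \to \infty$ forced by the boundedness of $\eta$.

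First I would invoke the previous proposition to get a uniform bound $\eta(\sigma) \leq B$ with $B = \lambda + \nu \epsilon$, so that
\begin{eqnarray*}
\xi(\sigma) \;=\; \sigma - \eta(\sigma) \;\geq\; \sigma - B \;\underset{\sigma \to \infty}{\longrightarrow}\; \infty.
\end{eqnarray*}
This is the ``lower'' half of the argument: $\xi(\sigma)$ is forced to diverge. The ``upper'' half uses the representation $\xi(\sigma) = \Phi(\Psi(\sigma) - \epsilon)$ from Definition \ref{def:xitauDef}. Here is where $\epsilon > 0$ enters crucially: the inequality $\Psi(\sigma) - \epsilon \leq T_\infty - \epsilon < T_\infty$ is \emph{strict}, so $\Psi(\sigma) - \epsilon$ stays uniformly bounded away from the putative horizon $T_\infty$.

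Next I would show that $\Phi(T_\infty - \epsilon) < \infty$. Because the cumulative flux $G[\Psi]$ is smooth, the solution $\Psi$ is continuous and nondecreasing, so for any $t < T_\infty$ one can find $\sigma_t$ with $\Psi(\sigma_t) > t$; the definition $\Phi(t) = \inf\{\sigma \geq \xi_0 : \Psi(\sigma) > t\}$ then yields $\Phi(t) \leq \sigma_t < \infty$. Applied to $t = T_\infty - \epsilon$, and using that $\Phi$ is nondecreasing, one obtains
\begin{eqnarray*}
\xi(\sigma) \;=\; \Phi(\Psi(\sigma) - \epsilon) \;\leq\; \Phi(T_\infty - \epsilon) \;<\; \infty \qquad \text{for all } \sigma \geq 0,
\end{eqnarray*}
which contradicts $\xi(\sigma) \to \infty$. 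Hence $T_\infty = \infty$, as desired.

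There is no serious technical obstacle in this argument; the content is really that the proposition on boundedness of $\eta$ does all the work. The only subtle point to flag carefully is the essential use of $\epsilon > 0$ in producing the strict inequality $T_\infty - \epsilon < T_\infty$. Without it the argument collapses, which is consistent with the fact that for $\epsilon = 0$ one cannot hope to bound $\Phi$ away from its own singular horizon in this way; the $\epsilon \to 0^+$ regime is precisely what is handled separately by Theorem \ref{th:main2}.
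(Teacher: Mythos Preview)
Your proof is correct and follows essentially the same contradiction argument as the paper: assume $T_\infty < \infty$, use the uniform bound on $\eta$ from the preceding proposition to force $\xi(\sigma)\to\infty$, and then use $\xi(\sigma)=\Phi(\Psi(\sigma)-\epsilon)\leq \Phi(T_\infty-\epsilon)<\infty$ to obtain the contradiction. Your write-up is in fact more explicit than the paper's about why $\Phi(T_\infty-\epsilon)$ is finite, and you correctly highlight that the strict inequality from $\epsilon>0$ is the crux.
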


\begin{proof}
Given a solution $\Psi:\mathbbm{R}^+ \to \mathbbm{R}^+$  to the fixed-point problem \ref {def:fixedpoint_int}, suppose that $T_\infty = \lim_{\sigma \to \infty} \Psi(\sigma) < \infty$.
Then, as $\Psi$ and $\Phi=\Psi^{-1}$ are nondecreasing, for all $\epsilon>0$
\begin{eqnarray}
S_\infty = \lim_{\sigma \to \infty} \xi(\sigma) = \lim_{\sigma \to \infty} \Phi(\Psi(\sigma)-\epsilon) =  \lim_{t \to T_\infty^-} \Phi(t-\epsilon) \leq  \Phi(T_\infty^--\epsilon) < \infty \, . \nonumber
\end{eqnarray}
This contradicts the fact that $\eta = \mathrm{Id}-\xi$ is uniformly bounded on $\mathbbm{R}^+$.
Thus we must have $T_\infty=\infty$.
\end{proof}

Note that the above result only holds for nonzero refractory period  $\epsilon>0$.
Establishing a similar result for vanishing refractory period $\epsilon=0$ will require a detailed analysis, which we will conduct only for large enough interaction parameters $\lambda > \Lambda$.

\section{Global blowup solutions}\label{sec:global}

In this section, we establish the existence and uniqueness of explosive dPMF dynamics over the whole half-line $\mathbbm{R}^+$ in the large interaction regime.
First, we introduce useful \emph{a prori} estimates about the fluxes associated to the fixed-point solutions $\Psi$ for a natural class of initial conditions. 
Second, we utilize these estimates to show that in the large interaction regime, a large enough blowup is followed by a subsequent blowup in a well ordered fashion, in the sense that this next blowup can only happen after all the processes that previously blew up have reset.
Third, we show that the size of the next blowup is lower bounded away from zero, which allows one to establish the existence and uniqueness of explosive dPMF dynamics over the the whole half-line $\mathbbm{R}^+$ by induction on the number of blowup episodes.


\subsection{{\emph A priori} bounds for natural initial conditions}\label{sec:apriori}
Given normalized initial conditions $(q_0,g_0)$ in $\mathcal{M}((0,\infty)) \times \mathcal{M}([\xi_0, 0))$, assume that the fixed-point problem  \ref {def:fixedpoint_int} admits a solution $\Psi$ up to some possibly infinite time $S_\infty$.
Then, the associated instantaneous flux function $g$ satisfies the renewal-type equation \eqref{eq:gRenew} on $[0,S_\infty)$.
We aim at utilizing this characterization to show that for a restricted class of initial conditions, $g$ is uniformly bounded on $[0,S_\infty)$, with an upper bound that is independent of the coupling parameter $\lambda$ and that scales with the $L_1$ norms of the initial conditions.
Informally, the restricted initial conditions at stake are these distributions of time-changed processes that have been reset at some time in the past.
We will refer to this restricted set of initial conditions as {\em natural initial conditions}, since they turn out to have the same regularities as generic dPMF solutions.
We formally define natural initial conditions as follows:

\begin{definition}
Natural initial conditions $(q_0,g_0)$ in $\mathcal{M}((0,\infty)) \times \mathcal{M}([\xi_0, 0))$ are those for which there exists a measure $\mu_0$ in $\mathcal{M}((0,\infty))$ such that:

$(i)$ The measure $q_0$ corresponds to the density 
\begin{eqnarray}
q_0(x) = \int_0^\infty  \frac{\kappa(\tau, x, \Lambda)}{1-H(\tau,\Lambda)} \, \mu_0( \dd \tau )   \, , \nonumber
\end{eqnarray}

$(ii)$ The measure $g_0$ corresponds to the density 
\begin{eqnarray}
g_0(\sigma)=\int_0^\infty \frac{h(\sigma+\tau,\Lambda)}{1-H(\tau,\Lambda)} \, \mu_0( \dd \tau )  \, . \nonumber
\end{eqnarray}

$(iii)$ We have the degenerate normalization condition
\begin{eqnarray}
\int_0^\infty  \mu_0(\dd \tau)  + \int_{\xi_0}^0 g_0(\tau) \, \dd \tau  \leq 1 \, . \nonumber
\end{eqnarray}
\end{definition}

We derive the above formal definition for natural initial conditions by considering absorbed, drifted Wiener dynamics that have started at $\Lambda$ at some time in the past with cumulative rate function $R$.
Such dynamics admit density functions under the integral form 
\begin{eqnarray}
q(\sigma,x) = \int_{-\infty}^0 \kappa(\sigma-\xi, x, \Lambda) \, \dd R(\xi) \, , \nonumber
\end{eqnarray}
where integration by part guarantees convergence for all cumulative rate functions with, say, polynomial growth.
Note that we only requires $R$ to be a nondecreasing \emph{c\`adl\`ag} function for being a cumulative rate function.
The instantaneous flux function associated to this density is 
\begin{eqnarray}
g(\sigma) = \partial_x q(\sigma,0)/2 = \int_{-\infty}^0 h(\sigma-\xi, \Lambda) \, \dd R(\xi) \, . \nonumber
\end{eqnarray}
From there, the parametrization of natural initial conditions is recovered by setting
\begin{eqnarray}
\mu_0(\dd \tau) = \dd R(-\tau) (1-H(\tau,\Lambda)) \, , \nonumber
\end{eqnarray}
and recognizing that $q_0(x)=q(0,x)$ for $x>0$ and $g_0(\sigma)=g(\sigma)$, $-\xi_0 \leq \sigma < 0$.
Bearing the above remarks in mind, we have the following interpretation for the parametrizing measure $\mu_0$:
given a fraction of active processes distributed according to $q_0$ at time $\sigma=0$, $\mu_0$ represents the distribution of times elapsed since their last resets.
In particular, definition $(i)$ implies that $\int_0^\infty  \mu_0(\tau) \, \dd \tau = \int_0^\infty  q_0(x) \, \dd x$.
Furthermore, by definition $(i)$ and $(ii)$ and the remarks made above, we have
\begin{eqnarray}
\lim_{\sigma \to 0^-} g_0(\sigma)=\partial_\sigma q_0(0)/2 \, , \nonumber
\end{eqnarray}
so that conservation of probability holds as in \eqref{eq:abscons_int}. 
The above observation directly shows that given any natural initial conditions $(q_0,g_0)$ in $\mathcal{M}((0,\infty)) \times \mathcal{M}([\xi_0, 0))$, 
the current state of a solution dPMF dynamics specifies natural initial conditions
\begin{eqnarray*}
\big(q(\sigma,\cdot) , \{ g(\xi -\sigma)\}_{-\eta(\sigma)\leq \xi < 0} \big) \quad \in \quad \mathcal{M}((0,\infty)) \times \mathcal{M}([-\eta(\sigma), 0)) \, .
\end{eqnarray*}
In other word, natural initial conditions are stabilized by dPMF dynamics.
Finally, note that we do not require normalization to one in $(iii)$ so that natural initial conditions can represent a smaller-than-one fraction of the processes.
This latter point will be useful to discuss the following \emph{a priori} estimates for the instantaneous flux associated to solutions of the fixed-point equation \ref {def:fixedpoint_int}:

\begin{proposition}\label{prop:gBound}
For natural initial conditions $(q_0, g_0)$ in $\mathcal{M}((0,\infty)) \times \mathcal{M}([\xi_0, 0))$, we have
\begin{eqnarray}
\Vert g \Vert_{[0,\infty)} \leq  A_\Lambda  \, \left( \int_0^\infty q_0(x) \, \dd x + \int_{\xi_0}^0 g_0(\sigma)\, \dd \sigma  \right) = A_\Lambda  \, \Vert (q_0, g_0)\Vert_{1} \, . \nonumber
\end{eqnarray}
for a constant $A_\Lambda$ that only depends on $\Lambda$.
\end{proposition}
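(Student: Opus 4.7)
The plan is to realize the instantaneous flux $g$ as a renewal-type density with kernel $h(\cdot, \Lambda)$ from the reset site, and then invoke renewal-theoretic $L^\infty$ estimates that depend only on $\Lambda$.

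First I would reformulate the initial-condition contribution $g^{(0)}(\sigma) = \int_0^\infty h(\sigma, x) q_0(x) \, \dd x$ in the renewal equation \eqref{eq:gRenew}. Substituting the natural representation of $q_0$ from definition (i), applying Fubini, and using the strong Markov identity $\int_0^\infty h(\sigma, x) \kappa(\tau, x, \Lambda) \, \dd x = h(\sigma + \tau, \Lambda)$ (which expresses that the first-passage distribution splits at an intermediate time) yields
\begin{equation*}
g^{(0)}(\sigma) = \int_0^\infty \frac{h(\sigma + \tau, \Lambda)}{1 - H(\tau, \Lambda)} \, \mu_0(\dd \tau) \, .
\end{equation*}
The integrand is the hazard-rate function of $h(\cdot, \Lambda)$ evaluated at $\sigma + \tau$. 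By Property~2 together with the explicit form \eqref{eq:hFPT} of $h$, this hazard rate is continuous on $\mathbb{R}^+$, vanishes at $0$, and is uniformly bounded by a finite constant $c_\Lambda$ depending only on $\Lambda$. Hence $\Vert g^{(0)}\Vert_\infty \leq c_\Lambda \, \Vert q_0\Vert_1$. The $g_0$-piece of the initial condition—which enters the renewal integral at small $\tau$ with $\xi(\tau) < 0$—admits the same representation by definition~(ii) and thus satisfies the same bound with $\Vert g_0 \Vert_1$ in place of $\Vert q_0 \Vert_1$.

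Second, I would handle the remaining renewal contribution $\int_0^\sigma h(\sigma - \tau, \Lambda) \, \dd G(\xi(\tau))$ by Picard iteration. Writing $g = \sum_{k \geq 0} g^{(k)}$ with $g^{(k+1)}(\sigma) = \int_0^\sigma h(\sigma-\tau, \Lambda) \, \dd G^{(k)}(\xi(\tau))$, each iterate is non-negative and inherits the linear scaling in $\Vert (q_0, g_0)\Vert_1$. Using that $h(\cdot, \Lambda) \in L^1 \cap L^\infty$ has unit $L^1$-norm and finite mean $\Lambda$, a classical renewal-density estimate—together with the asymptotic flux $1/(\Lambda+d)$ from Proposition~\ref{prop:gflux}—yields a uniform-in-$\sigma$ bound on the sum by a constant $A_\Lambda$ depending only on $\Lambda$.

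The main obstacle is the $\Psi$-dependent, inhomogeneous delay $\eta$, which prevents a direct application of the standard convolution renewal theorem. To circumvent this I would dominate the inhomogeneous renewal by a constant-delay renewal: since $\eta \geq \nu \epsilon$, fresh reset mass is inserted no earlier than in the homogeneous system with constant delay $\nu \epsilon$, and a coupling/comparison argument on the iterates should yield pointwise domination of each $g^{(k)}$ by its constant-delay analogue, hence of $g$ by the homogeneous flux $g_{\nu \epsilon}$ of Proposition~\ref{prop:gflux}. The explicit Laplace-transform formula for $g_{\nu \epsilon}$ then supplies a uniform $L^\infty$ bound linear in initial mass. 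Making this coupling precise across the first few iterates—where resets may happen soon after $\sigma = 0$ and the integral equation is most sensitive to the delay structure—is the principal technical crux.
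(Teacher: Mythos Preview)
Your first step is on the right track and matches the paper: the natural-condition representation together with the Markov identity reduces the initial contribution to an integral of $h(\sigma+\tau,\Lambda)/(1-H(\tau,\Lambda))$ against $\mu_0$, and this ratio is uniformly bounded in $(\sigma,\tau)$ by a constant depending only on $\Lambda$. (One small correction: this ratio is not the hazard rate at $\sigma+\tau$, since the denominator is $1-H(\tau,\Lambda)$ rather than $1-H(\sigma+\tau,\Lambda)$; but since $H$ is increasing the hazard-rate bound still dominates it, so your conclusion stands.)

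The genuine gap is in your treatment of the renewal term $\int_0^\sigma h(\sigma-\tau,\Lambda)\,\dd G(\xi(\tau))$. The Picard iteration plus comparison with a constant-delay system is not convincing as stated: a larger delay does not obviously yield a pointwise smaller flux, since delayed mass can pile up and produce a larger instantaneous reset (and hence a larger subsequent flux peak) than in the homogeneous system. Even if one could make a monotone coupling work, the resulting bound would a priori depend on the comparison delay $d=\nu\epsilon$, and one would then need a uniform-in-$d$ $L^\infty$ estimate on $g_d$, not merely the asymptotic value from Proposition~\ref{prop:gflux}.

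The paper bypasses all of this with a conservation-of-probability trick that you are missing. One writes
\[
\int_0^\sigma h(\sigma-\tau,\Lambda)\,\dd G(\xi(\tau)) \;\le\; M \int_0^\sigma \bigl(1-H(\sigma-\tau,\Lambda)\bigr)\,\dd G(\xi(\tau)),
\]
with $M=\Vert h(\cdot,\Lambda)/(1-H(\cdot,\Lambda))\Vert_\infty$, and then uses the renewal equation \eqref{eq:DuHamel} for $G$ together with the balance law $G(\sigma)-G(\xi(\sigma))=P(\sigma)-P(0)+[G(0)-G(\xi_0)]$ to evaluate the right-hand integral \emph{exactly} as $P(0)-P(\sigma)+\int_0^\infty H(\sigma,x)q_0(x)\,\dd x$, which is bounded by $\Vert(q_0,g_0)\Vert_1$ independently of $\eta$. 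This is the key idea: the inhomogeneous delay disappears because the integral against $1-H$ can be rewritten in terms of conserved quantities. No iteration, no coupling, and the $\Lambda$-only dependence is immediate.
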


\begin{proof}
Considering  natural initial conditions allows one to utilize the Markov property of the first-passage kernels $\kappa$ to write:
\begin{eqnarray}
\int_0^\infty h(\sigma,x) q_0(x) \, \dd x 
&=& 
\int_0^\infty  \frac{1}{2} \partial_y \kappa(\sigma, 0, x) q_0(x) \, \dd x  \, , \nonumber\\
&=&
\int_0^\infty  \left( \int_0^\infty  \frac{1}{2} \partial_y \kappa(\sigma, 0, x) \kappa(\tau, x, \Lambda) \, \dd x \right) \frac{\mu_0(\tau) \,  \, \dd \tau}{1-H(\tau,\Lambda)}    \, , \nonumber\\
&=&
\int_0^\infty  \left( \int_0^\infty  \frac{1}{2} \partial_y \kappa(\tau+\sigma, 0,  \Lambda) \, \dd x \right) \frac{\mu_0(\tau) \,  \, \dd \tau}{1-H(\tau,\Lambda)}    \, , \nonumber\\
&=&
\int_0^\infty  \frac{h(\tau+\sigma, \Lambda) }{1-H(\tau,\Lambda)} \mu_0(\tau)   \, \dd \tau  \, . \nonumber
\end{eqnarray}
Injecting the above in equation \eqref{eq:gRenew}, we deduce the inequality 
\begin{eqnarray}
\vert g(\sigma) \vert
&\: \leq \: & 
\bigg \Vert \frac{h(\cdot+\sigma, \Lambda)}{1-H(\cdot,\Lambda)} \bigg \Vert_\infty \int_0^\infty  \mu_0(\tau) \, \dd \tau + \nonumber\\
&&
 \quad \quad \bigg \Vert  \int_0^\sigma  h(\sigma - \tau,\Lambda) \, \dd G( \xi(\tau)) \bigg \Vert_\infty  \, , \nonumber\\
 &\: \leq \: & 
\bigg \Vert \frac{h(\cdot+\sigma, \Lambda)}{1-H(\cdot,\Lambda)} \bigg \Vert_\infty \int_0^\infty  \mu_0(\tau) \, \dd \tau + \nonumber\\
&&
 \quad \quad \bigg \Vert \frac{h(\cdot, \Lambda)}{1-H(\cdot,\Lambda)} \bigg \Vert_\infty \int_0^\sigma  (1-H(\sigma - \tau,\Lambda)) \, \dd G( \xi(\tau)) \, , \nonumber
\end{eqnarray}
where both uniform norms are finite.
Indeed, for all $\sigma \geq 0$, L'Hospital rule yields that
\begin{eqnarray}
\lim_{\tau \to \infty} \frac{h(\tau + \sigma, \Lambda)}{1-H(\tau,\Lambda)} = \frac{e^{-\sigma/2}}{2} \, ,  \nonumber
\end{eqnarray}
which shows that the infinity norms are finite at fixed $\sigma$.
Moreover, exploiting the fact that $h$ is unimodal with maximum value $h^\star_\Lambda$ at $\sigma^\star = (-3+\sqrt{9+4\Lambda})/2$ and that $H(\cdot,\Lambda)$ is increasing,
we have for $\sigma \geq \sigma^\star$
\begin{eqnarray}
 \bigg \Vert \frac{h(\cdot+\sigma, \Lambda)}{1-H(\cdot,\Lambda)} \bigg \Vert_\infty
\leq 
 \bigg \Vert \frac{h(\cdot, \Lambda)}{1-H(\cdot,\Lambda)} \bigg \Vert_\infty \stackrel{\mathrm{def}}{=} M < \infty \, , \nonumber
\end{eqnarray}
whereas for $\sigma < \sigma^\star$, we have
\begin{eqnarray}
 \bigg \Vert \frac{h(\cdot+\sigma, \Lambda)}{1-H(\cdot,\Lambda)} \bigg \Vert_\infty
\leq 
\frac{\Vert h(\cdot , \Lambda) \Vert_\infty}{1-H(\sigma^\star,\Lambda)}  
=
\frac{h^\star_\Lambda}{1-H(\sigma^\star,\Lambda)} 
\stackrel{\mathrm{def}}{=}  N < \infty \, . \nonumber
\end{eqnarray}
Thus, we have
\begin{eqnarray}\label{eq:MN}
\vert g(\sigma) \vert
&\leq& 
\max(M,N) \int_0^\infty q_0(x) \, \dd x
+ 
M \int_0^\sigma  (1-H(\sigma-\tau,\Lambda)) \, \dd G( \xi(\tau)) \, , 
\end{eqnarray}
where the constant $M$ and $N$ only depends on $\Lambda$.
To bound the integral term in the inequality above, we make two observations:
First, we write the renewal-type equation \eqref{eq:DuHamel} for the cumulative flux $G$ under the form
\begin{eqnarray}
\int_0^\sigma  H(\sigma-\tau,\Lambda)) \, \dd G( \xi(\tau)) = G(\sigma) - G(0) - \int_0^\infty H(\sigma,x)  q_0(x) \, \dd x \, . \nonumber
\end{eqnarray}
Second, we write the conservation of probability for processes originating from $\mu_0$ as 
\begin{eqnarray}
G( \sigma) - G(\xi(\sigma))   - \big[ G(0)-G(\xi_0) \big] = P(\sigma) - P(0) \, , \nonumber
\end{eqnarray}
where $P$ denotes the fraction of processes originating from $\mu_0$ that is inactive.
These two observations allows one to express the integral term in  inequality \eqref{eq:MN} as
\begin{eqnarray}
\int_0^\sigma(1-H(\sigma-\tau,\Lambda)) \, \dd G( \xi(\tau))
=
P(0) - P(\sigma) + \int_0^\infty H(\sigma,x)  q_0(x) \,  \dd x \, . \nonumber
\end{eqnarray}
Therefore, as $\Vert H \Vert_\infty \leq 1$, we have
\begin{eqnarray}
\Vert g \Vert_{[0,\infty)} 
&\leq& 
\max(M,N) \int_0^\infty q_0(x) \, \dd x + M \left(   P(0) + \int_0^\infty q_0(x) \, \dd x  \right) \, . \nonumber
\end{eqnarray}
We conclude by observing that by conservation of probability mass $ P(0)  \leq \Vert (q_0,g_0)\Vert_{1}$, so that
\begin{eqnarray}
\Vert g \Vert_{[0,\infty)} 
&\leq& 
3 \max(M,N) \Vert (q_0,g_0)\Vert_{1} \, . \nonumber
\end{eqnarray}
\end{proof}

The arguments of the above proof can be adapted to obtain a uniform bound to $\partial_\sigma g$, which consitutes another useful \emph{a priori} estimate :

\begin{proposition}\label{prop:dgBound}
For natural initial conditions $(q_0,g_0)$ in $\mathcal{M}((0,\infty)) \times \mathcal{M}([\xi_0, 0))$, we have
\begin{eqnarray}
\Vert \partial_\sigma g \Vert_{[0,\infty)} \leq  B_\Lambda  \, \left( \int_0^\infty q_0(x) \, \dd x + \int_{\xi_0}^0 g_0(\sigma)  \right) = B_\Lambda  \, \Vert (q_0, g_0)\Vert_{1} \, . \nonumber
\end{eqnarray}
for a constant $B_\Lambda$ that only depends on $\Lambda$.
\end{proposition}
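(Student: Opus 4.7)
The plan is to mirror the proof of Proposition~\ref{prop:gBound} after differentiating the renewal-type equation \eqref{eq:gRenew} one more time with respect to $\sigma$. First, because $h(u,\Lambda) = \Lambda\, u^{-3/2}(2\pi)^{-1/2} e^{-(\Lambda-u)^2/(2u)}$ vanishes faster than any power as $u \to 0^+$, the boundary term arising from Leibniz differentiation of the renewal convolution is zero, yielding
\begin{equation*}
\partial_\sigma g(\sigma) = \int_0^\infty \partial_\sigma h(\sigma,x)\, q_0(x)\, \dd x + \int_0^\sigma \partial_\sigma h(\sigma-\tau,\Lambda)\, \dd G(\xi(\tau)).
\end{equation*}
I would then apply the Chapman-Kolmogorov identity already derived in the proof of Proposition~\ref{prop:gBound} for natural initial conditions, recasting the first integral as $\int_0^\infty \partial_\sigma h(\tau+\sigma,\Lambda)/(1-H(\tau,\Lambda))\, \mu_0(\tau)\, \dd \tau$.

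The analytic core is then to show that the ratio $\phi(u) := |\partial_\sigma h(u,\Lambda)|/(1-H(u,\Lambda))$ is bounded on $[0,\infty)$ by some $M'$ depending only on $\Lambda$. A direct computation gives the logarithmic derivative
\begin{equation*}
(\log h(\cdot,\Lambda))'(u) = -\frac{3}{2u} - \frac{1}{2} + \frac{\Lambda^2}{2u^2},
\end{equation*}
so $\partial_\sigma h(u,\Lambda) \sim -h(u,\Lambda)/2$ as $u \to \infty$; combined with the classical tail asymptotic $1-H(u,\Lambda) \sim 2 h(u,\Lambda)$, this yields $\phi(u) \to 1/4$. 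Near $u=0$ the apparent singularity from the $\Lambda^2/(2u^2)$ factor is killed by the super-exponential vanishing of $h$ itself, so $\phi(u)\to 0$; continuity therefore forces $M'=\|\phi\|_\infty<\infty$. The monotonicity inequality $1-H(\tau+\sigma,\Lambda) \leq 1-H(\tau,\Lambda)$ already exploited in Proposition~\ref{prop:gBound} then upgrades this to the uniform bounds $|\partial_\sigma h(\tau+\sigma,\Lambda)|/(1-H(\tau,\Lambda)) \leq M'$ and $|\partial_\sigma h(\sigma-\tau,\Lambda)|/(1-H(\sigma-\tau,\Lambda)) \leq M'$ for all $\sigma,\tau \geq 0$.

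Finally, I would factor $M'$ out of each integral term and reuse verbatim the conservation-of-probability identity
\begin{equation*}
\int_0^\sigma (1-H(\sigma-\tau,\Lambda))\, \dd G(\xi(\tau)) = P(0) - P(\sigma) + \int_0^\infty H(\sigma,x)\, q_0(x)\, \dd x \leq 2\|(q_0,g_0)\|_1,
\end{equation*}
to conclude $\|\partial_\sigma g\|_{[0,\infty)} \leq 3 M' \|(q_0,g_0)\|_1$ and set $B_\Lambda := 3 M'$. The principal obstacle beyond simply quoting Proposition~\ref{prop:gBound} is the asymptotic verification that $\phi$ is bounded: because $\partial_\sigma h$ changes sign and is not unimodal, one cannot transplant the monotonicity argument used for $h$ itself and must instead control both signs uniformly using the explicit form of $(\log h(\cdot,\Lambda))'$.
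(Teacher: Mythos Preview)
Your proposal is correct and follows the same route as the paper, which simply states that the proof ``proceeds in exactly the same fashion as the proof of Proposition~\ref{prop:gBound}'' starting from the differentiated renewal equation and noting the asymptotic $\lim_{\tau\to\infty}|\partial_\sigma h(\tau+\sigma,\Lambda)|/(1-H(\tau,\Lambda))=e^{-\sigma/2}/4$. You have in fact filled in a detail the paper glosses over: since $\partial_\sigma h(\cdot,\Lambda)$ is not unimodal, the case split used for $h$ in Proposition~\ref{prop:gBound} does not transfer verbatim, and your use of the monotonicity $1-H(\tau+\sigma,\Lambda)\leq 1-H(\tau,\Lambda)$ to reduce the two-variable bound to the single-variable bound $\|\phi\|_\infty$ is exactly the right patch.
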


\begin{proof}
The proof proceeds in exactly the same fashion as the proof of Proposition \eqref{prop:gBound} but starting from the renewal-type equation
\begin{eqnarray}
\partial_\sigma g(\sigma) 
= 
\int_0^\infty \partial_\sigma h(\sigma,x) q_0(x) \, \dd x
+
\int_0^\sigma  \partial_\sigma h(\sigma-\tau,\Lambda) \, \dd G( \xi(\tau)) \, , \nonumber
\end{eqnarray}
where we observe that
\begin{eqnarray}
\lim_{\tau \to \infty} \frac{\vert \partial_\sigma h(\tau + \sigma, \Lambda) \vert}{1-H(\tau,\Lambda)} = \frac{e^{-\sigma/2}}{4} \, . \nonumber
\end{eqnarray}
\end{proof}

The above \emph{a priori} bounds will be crucial to exhibit global solutions to the fixed-point problem \ref{prop:fixedPoint} that exhibit an infinite number of blowups.
Their main use will be in showing that for blowup of large enough size $0<\pi_1<1$, the small fraction $1-\pi_1$ of surviving active processes cannot trigger another blowup before the original fraction $\pi_1$ has reset.
In other words, this will guarantee that blowups are well ordered.

\subsection{Well-ordered blowups for large interactions}

Given natural initial conditions $(q_0, g_0)$ in $\mathcal{M}((0,\infty)) \times \mathcal{M}([\xi_0, 0))$, consider a solution to the fixed-point problem \ref{def:fixedpoint_int} with associated instantaneous flux $g$.
By Propositions \ref{prop:gBound} and \ref{prop:dgBound}, we know that the flux $g$ and its derivative $\partial_\sigma g$, are both uniformly bounded on the domain of the solution. 
Here, we aim at refining the bounding analysis of $g$ to ensure that for large enough synchronization events, blowups are well ordered in the sense that no blowup can occur before the reset of processes that synchronized in the past.  
Large synchronization events inactivate so many processes that the remaining processes are too few to trigger a blowup on their own.

To prove this, we distinguish between two contributions to the current state of the dynamics, according to whether processes have inactivated in the most recent blowup or not.
Specifically, assume that a blowup of size $\pi_1$ triggers at $S_1 \geq 0$ with exit time $U_1$ and reset time $R_1$.
At time $\sigma \geq R_1$, we can always split the current state of the dynamics into a contribution from the synchronized processes that reset at $R_1$ (marked by a subscript $\pi_1$) and a contribution from the other processes (marked by a subscript $\cancel{\pi_1}$).
This means that there are $(q_{\pi_1}, g_{\pi_1})$ and $(q_{\cancel{\pi_1}}, g_{\cancel{\pi_1}})$ in $\mathcal{M}((0,\infty)) \times \mathcal{M}([\xi(\sigma), \sigma))$ such that for all $\sigma \geq R_1$
\begin{eqnarray}
q(\sigma, \cdot) = q_{\pi_1}(\sigma, \cdot)+q_{\cancel{\pi_1}}(\sigma, \cdot) \quad \mathrm{and} \quad g(\sigma)=g_{\pi_1}(\sigma)+g_{\cancel{\pi_1}}(\sigma) \, , 
\end{eqnarray}
with $q_{\pi_1}(R_1,\cdot)=\pi_1 \delta_\Lambda$ and $g_{\pi_1}(\sigma)=0$ for $\sigma \leq R_1$ and with $q_{\cancel{\pi_1}}(R_1,\cdot)=q(R_1^-, \cdot)$ and $g_{\cancel{\pi_1}}(\sigma)=g(\sigma)$ for $U_1 \leq \sigma< R_1$.
Given a fixed backward function $\xi$, it is clear that the governing renewal-type equation \eqref{eq:gRenew} applies to both contribution $(q_{\pi_1}, g_{\pi_1})$ and $(q_{\cancel{\pi_1}}, g_{\cancel{\pi_1}})$ separately. 
As a result, Propositions \ref{prop:gBound} and \ref{prop:dgBound} also apply to both contributions separately, allowing to control the role of each contribution in triggering the next possible blowup. 

The next proposition states this point concisely by making use of the real numbers $\sigma^\dagger$ and $h^\dagger$ defined by
\begin{eqnarray}
\sigma^\dagger = \inf \{ \sigma>0 \, \vert \, \partial^2_\sigma h(\sigma,\Lambda) = 0 \} \quad \mathrm{and} \quad h^\dagger_\Lambda=h( \sigma^\dagger,\Lambda)\, . \nonumber
\end{eqnarray}
Observe that $\sigma^\dagger$ and $h^\dagger_\Lambda$ only depends on $\Lambda$, and that $\sigma^\dagger$ satisfies $0<\sigma^\dagger < \sigma^{\star}$, where $\sigma^\star$ is the unique maximizer of $h(\cdot, \Lambda)$:  $h(\sigma^\star,\Lambda)=h^\star_\Lambda>h^\dagger_\Lambda$.
Moreover, by definition of $\sigma^\dagger$,  observe that $\partial_\sigma h(.,\Lambda)$ is increasing on $[0,\sigma^\dagger]$.
Then, equipped with $h^\star_\Lambda$ and $\sigma^\dagger$, the following proposition states that for large enough interaction parameters $\lambda$, a sufficiently large blowup $\pi_1$ will trigger a next blowup in a well ordered fashion.

\begin{proposition}\label{prop:nextBlowup}
Given  an interaction parameter $\lambda$ with $\lambda>l_\Lambda = 1/h^\dagger_\Lambda$, there exists a constant $C_\Lambda$ that only depends on $\Lambda$ such that a full blowup of size $\pi_1$ with
\begin{eqnarray}\label{eq:pi0}
 \pi_1 > \max \left( \frac{l_\Lambda}{\lambda}, 1- \frac{C_\Lambda}{\lambda}\right)\, ,
\end{eqnarray}
is followed by another full blowup in finite time. Moreover, if the first blowup happens at time $S_1$, the next blowup time $S_2$ is such that $S_1 \leq R_1 < S_2 \leq R_1+\sigma^\dagger$, where we have defined the reset time $R_1=\tau(U_1)$.
\end{proposition}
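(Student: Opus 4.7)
The plan is to exploit the linearity of the renewal-type equation~\eqref{eq:gRenew} to decouple the post-blowup dynamics into two non-interacting contributions that can be controlled separately. Following the paragraph preceding the statement, I decompose $(q,g) = (q_{\pi_1}, g_{\pi_1}) + (q_{\cancel{\pi_1}}, g_{\cancel{\pi_1}})$, where $(q_{\pi_1}, g_{\pi_1})$ tracks the synchronized mass (idle on $[U_1, R_1]$ and restarting from $\pi_1 \delta_\Lambda$ at $R_1$) and $(q_{\cancel{\pi_1}}, g_{\cancel{\pi_1}})$ tracks the remainder (of total $L^1$-mass at most $1-\pi_1$). Both components are natural initial conditions, so Propositions~\ref{prop:gBound} and~\ref{prop:dgBound} apply separately to each, yielding in particular the uniform estimates $\|g_{\cancel{\pi_1}}\|_\infty \leq A_\Lambda(1-\pi_1)$ and $\|\partial_\sigma g_{\cancel{\pi_1}}\|_\infty \leq B_\Lambda(1-\pi_1)$.

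The well-ordered property then follows immediately: on $[U_1, R_1]$ the synchronized mass is idle so $g = g_{\cancel{\pi_1}}$, and imposing $\pi_1 > 1 - C_\Lambda/\lambda$ with any $C_\Lambda \leq 1/A_\Lambda$ forces $g < 1/\lambda$ there, ruling out any trigger time on this interval. For the existence of a crossing by $R_1+\sigma^\dagger$, I extract from~\eqref{eq:gRenew} applied to $g_{\pi_1}$ the pointwise lower bound $g_{\pi_1}(\sigma) \geq \pi_1 h(\sigma - R_1, \Lambda)$ (the renewal integral being non-negative). Evaluating at $R_1 + \sigma^\dagger$ and using $\pi_1 > l_\Lambda/\lambda = 1/(\lambda h^\dagger_\Lambda)$ gives $g_{\pi_1}(R_1+\sigma^\dagger) > 1/\lambda$, which combined with the smoothness of $g$ and $g(R_1) < 1/\lambda$ guarantees a first crossing time $S_2 \in (R_1, R_1 + \sigma^\dagger]$.

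The main obstacle is verifying the full-blowup condition $\partial_\sigma g(S_2) > 0$: the naive pointwise bound $\partial_\sigma g_{\pi_1} \geq \pi_1 \partial_\sigma h(\sigma-R_1, \Lambda)$ degenerates as $\lambda \to \infty$ because $\partial_\sigma h(0, \Lambda) = 0$ forces $S_2 - R_1 \to 0$. To bypass this degeneracy, I plan to leverage the log-concavity of $h(\cdot, \Lambda)$ on $[0, \sigma^\dagger]$ (Property~1), which asserts that $\partial_\sigma h/h$ is nonincreasing on this interval and therefore bounded below there by $c_\Lambda = \partial_\sigma h(\sigma^\dagger, \Lambda)/h^\dagger_\Lambda > 0$. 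Differentiating the renewal equation for $g_{\pi_1}$ and applying the pointwise inequality $\partial_\sigma h \geq c_\Lambda h$ both to the first-passage term and inside the renewal integrand then produces the clean multiplicative estimate $\partial_\sigma g_{\pi_1}(\sigma) \geq c_\Lambda g_{\pi_1}(\sigma)$ on $(R_1, R_1+\sigma^\dagger]$, which transfers information about $g_{\pi_1}$ directly into information about its derivative.

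Combining the estimates at $\sigma = S_2$ yields
\begin{equation*}
\partial_\sigma g(S_2) \;\geq\; c_\Lambda g_{\pi_1}(S_2) - B_\Lambda(1-\pi_1) \;\geq\; c_\Lambda\!\left(\tfrac{1}{\lambda} - A_\Lambda(1-\pi_1)\right) - B_\Lambda(1-\pi_1),
\end{equation*}
using $g_{\pi_1}(S_2) = 1/\lambda - g_{\cancel{\pi_1}}(S_2) \geq 1/\lambda - A_\Lambda(1-\pi_1)$ for the second inequality. This expression is strictly positive as soon as $1-\pi_1 < c_\Lambda/(\lambda(c_\Lambda A_\Lambda + B_\Lambda))$, so choosing $C_\Lambda = c_\Lambda/(c_\Lambda A_\Lambda + B_\Lambda)$---which is automatically smaller than $1/A_\Lambda$---furnishes a constant depending only on $\Lambda$ for which all the estimates above hold simultaneously, completing the plan.
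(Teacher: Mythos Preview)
Your proposal is correct and follows the same overall architecture as the paper: the same splitting $(q_{\pi_1},g_{\pi_1})+(q_{\cancel{\pi_1}},g_{\cancel{\pi_1}})$, the same use of Propositions~\ref{prop:gBound}--\ref{prop:dgBound} on the residual mass, and the same lower bound $g_{\pi_1}\geq\pi_1 h(\cdot-R_1,\Lambda)$ for the existence of a crossing before $R_1+\sigma^\dagger$. The genuine difference lies in how you verify the full-blowup condition. The paper does \emph{not} bypass the ``naive'' bound $\partial_\sigma g_{\pi_1}\geq\pi_1\,\partial_\sigma h(\cdot-R_1,\Lambda)$: it keeps it, introduces auxiliary times $s_1<S_2-R_1<s_2$ via an upper bound on $g_{\pi_1}$, uses the convexity of $h$ on $[0,\sigma^\dagger]$ to pass from $\partial_\sigma h(S_2-R_1)$ to $\partial_\sigma h(s_1)$, and then converts $\pi_1\,\partial_\sigma h(s_1)$ into a $b_\Lambda/\lambda$ bound via $\pi_1 h(s_1)=a_\Lambda/\lambda$. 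Your route is shorter: applying the log-concavity inequality $\partial_\sigma h\geq c_\Lambda h$ term-by-term in the differentiated renewal equation gives $\partial_\sigma g_{\pi_1}\geq c_\Lambda g_{\pi_1}$ directly, and you then read off $g_{\pi_1}(S_2)$ from $g(S_2)=1/\lambda$. This avoids the constants $a_\Lambda,s_1,s_2$ entirely.

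Two small points. First, you should say explicitly that $R_1<\infty$: on $[U_1,R_1)$ you have $g=g_{\cancel{\pi_1}}\leq A_\Lambda(1-\pi_1)<A_\Lambda C_\Lambda/\lambda$, and since your final choice $C_\Lambda=c_\Lambda/(c_\Lambda A_\Lambda+B_\Lambda)$ is \emph{strictly} below $1/A_\Lambda$, this gives $\Psi'\geq(1-A_\Lambda C_\Lambda)/\nu>0$ there, hence $R_1-U_1\leq\nu\epsilon/(1-A_\Lambda C_\Lambda)$. Second, your bound $\partial_\sigma g(S_2)>0$ degenerates as $1-\pi_1\to C_\Lambda/\lambda$, whereas the paper extracts a uniform lower bound $\partial_\sigma g(S_2)\geq b_\Lambda/(2\lambda)$; this uniform estimate (along with $s_1$) is reused downstream in Propositions~\ref{prop:inactiveFrac}--\ref{prop:persistence}, Proposition~\ref{prop:0est}, and Lemma~\ref{lem:xiBound}. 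If you intend to carry your argument through the rest of the paper, take $C_\Lambda$ half as large to recover a uniform bound of the form $c_\Lambda/(2\lambda)$.
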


\begin{proof} The proof proceeds in four steps: $(i)$ we show that a blowup cannot happen before the fraction $\pi_1$ of processes reset, $(ii)$ we show that the fraction $\pi_1$ of reset processes is enough to trigger a blowup, $(iii)$ we show the full blowup condition, and $(iv)$ we conclude by specifying the choice of $l_\Lambda$ and  $C_\Lambda$.

$(i)$
Suppose that the last blowup triggers at time $S_1$ with blowup size $\pi_1$, then at blowup exit time $U_1=S_1+\lambda \pi_1$,  the exit states
\begin{eqnarray}
\big(q(U_1, \cdot), \{g(\sigma)\}_{\xi(U_1) \leq \sigma < U_1}\big) \nonumber
\end{eqnarray}
determine natural initial conditions.
These are such that $G(U_1)-G(S_1) = \int_{S_1}^{U_1} g(\sigma) \, \dd \sigma = \pi_1$, representing the fraction of processes to be instantaneously reset at time $R_1=\tau(S_1)$.
Let us consider the part of the exit initial conditions excluding this fraction, i.e., the active processes that either survive the blowup or that are already inactive before the blowup:
\begin{eqnarray}
(q_{\cancel{\pi_1},0},g_{\cancel{\pi_1},0})=\big(q(U_1, \cdot), \{g(\sigma)\}_{\xi(U_1) \leq \sigma < S_1}\big) \, , \quad \mathrm{with} \quad \Vert (q_{\cancel{\pi_1},0},g_{\cancel{\pi_1},0})\Vert_{1} \leq 1-\pi_1 \, . \nonumber
\end{eqnarray}
Choosing $C_\Lambda<1/(4A_\Lambda)$  in \eqref{eq:pi0} implies that $1-\pi_1<1/(4A_\Lambda \lambda)$
so that by Proposition \ref{prop:gBound}, the instantaneous flux $g_{\cancel{\pi_1}}$ due to the linear dynamics of the processes arising from the partial initial conditions $(q_{\cancel{\pi_1},0},g_{\cancel{\pi_1},0})$ is bounded above with $\Vert g_{\cancel{\pi_1}} \Vert_{[U_1,\infty)} \leq 1/(4\lambda)$.
This implies that no blowup can occur before resetting the fraction of inactivated processes $\pi_1$ as the blowup condition ($g(\sigma) = 1/\lambda$) cannot be met.
Moreover, before the reset time $R_1$, the inverse time change $\Psi$ only depends on the partial partial initial conditions $(q_{\cancel{\pi_1},0},g_{\cancel{\pi_1},0})$ so that we have
\begin{eqnarray}
\epsilon 
&=& \Psi(R_1)-\Psi(U_1) \,  ,\nonumber \\
&=& \big(R_1-U_1-\lambda (G(R_1)-G(U_1) \big)/\nu \,  ,\nonumber \\
&\geq&(1-\lambda \Vert g_{\cancel{\pi_1}} \Vert_{[U_1,\infty)}) (R_1-U_1)/\nu \, ,\nonumber \\
&\geq&(R_1-U_1)/(2\nu) \, .\nonumber
\end{eqnarray}
Thus the reset of the processes inactivated during the last blowup must happen in finite time at $R_1$ with
\begin{eqnarray}\label{eq:tauS0U0}
\nu \epsilon < R_1 -U_1< 2\nu \epsilon \, .
\end{eqnarray}

$(ii)$ We denote by $g_{\pi_1}$ the instantaneous flux due the fraction $\pi_1$ of the processes that reset at forward time $R_1$.
For all $\sigma \geq R_1$, this flux satisfies
\begin{eqnarray}\label{eq:pi0ineq}
\pi_1 h(\sigma-R_1,\Lambda) \leq g_{\pi_1}(\sigma) \leq  \pi_1 \sum_{n=0}^{\infty}h(\sigma-R_1,n\Lambda) = \pi_1 \tilde{g}(\sigma-R_1, \Lambda)   \, ,
\end{eqnarray}
where left term is the flux due to the fraction $\pi_1$ excluding any reset after $R_1$, whereas the right term is the flux due to the fraction $\pi_1$ with instantaneous reset after $R_1$.
Remembering that by definition $h^\star_\Lambda= \sup_{\sigma \geq 0} h(\sigma,\Lambda)>h^\dagger_\Lambda$,
choosing $\lambda> 1/(h^\dagger_\Lambda \pi_1)$ implies that the blowup condition $g(\sigma) \geq 1/\lambda$ will be met at some time $S_2>R_1$.
The upper bound flux term $\tilde{g}$ in inequality \eqref{eq:pi0ineq} only depends on $\Lambda$ and is uniformly bounded with respect $\sigma$ in $\mathbbm{R}^+$.
Moreover, $\tilde{g}$ satisfies the renewal equation
\begin{eqnarray}
\tilde{g}(\sigma,\Lambda) = h(\sigma,\Lambda) + \int^\sigma_0 h(\sigma-\tau,\Lambda)\tilde{g}(\tau,\Lambda)\, \dd \tau \, , \nonumber
\end{eqnarray}
so that for all $0 \leq \sigma \leq \sigma^{\star}$, remembering that $\Vert \tilde{g} \Vert_\infty \leq A_\Lambda$, we have the upper bound 
\begin{eqnarray}
\tilde{g}(\sigma,\Lambda) 
\leq h(\sigma,\Lambda) + \Vert \tilde{g} \Vert_\infty \int^\sigma_0 h(\sigma-\tau,\Lambda) \, \dd \tau 
\leq h(\sigma,\Lambda) (1+ A_\Lambda \sigma^\star ) \, , \nonumber
\end{eqnarray}
where the last inequality follows from the fact that $h(\cdot,\Lambda)$ is increasing on $[0,\sigma^\star]$.
Therefore, defining $a_\Lambda = (1+ A_\Lambda \sigma^\star )^{-1}/2<1/2$, we have
\begin{eqnarray}
s_1<S_2-R_1<s_2 \, , \nonumber
\end{eqnarray}
where the delay times $s_1$ and $s_2$ only depend on $\Lambda$ and $\pi_1\lambda$ via:
\begin{eqnarray}\label{def:s1s2}
\quad s_1 = \inf \left\{ s>0 \, \bigg \vert \,  h(s,\Lambda) = \frac{a_\Lambda}{\pi_1 \lambda} \right\} 
\; \; \mathrm{and} \; \;
s_2 = \inf \left\{ s>0 \, \bigg \vert \,  h(s, \Lambda) = \frac{1}{ \pi_1 \lambda} \right\} \, .
\end{eqnarray}

$(iii)$ It remains to check the full-blowup condition, i.e., $\partial_\sigma g(S_2)>0$.
In this perspective, let us first establish a lower bound for $\partial_\sigma g_{\pi_1}(S_2)$.
Such a bound is obtained by differentiating \eqref{eq:gRenew} with respect to $\sigma$ for $R_1 \leq \sigma \leq R_1+\sigma^\dagger$, which yields
\begin{eqnarray}\label{eq:rentypeg1}
\partial_\sigma g_{\pi_1}(\sigma) 
= \pi_1 \partial_\sigma h(\sigma-R_1,\Lambda) + \int^\sigma_0 \partial_\sigma h(\sigma-\tau,\Lambda) \, g_{\pi_1}(\xi(\tau))  
\geq \pi_1 \partial_\sigma h(\sigma-R_1,\Lambda) \, , \nonumber
\end{eqnarray}
where we utilize the fact that $\partial_\sigma h(\cdot,\Lambda)$ is increasing on $[0,\sigma^\dagger]$.
Thus, under the condition that $s_2\leq \sigma^\dagger$, we have
\begin{eqnarray}
\partial_\sigma g_{\pi_1}(S_2) \geq \pi_1 \partial_\sigma h(S_2-R_1,\Lambda) \geq \pi_1 \partial_\sigma h(s_1,\Lambda) \, , \nonumber
\end{eqnarray}
where $s_1$ is defined by \eqref{def:s1s2}. Moreover, from the explicit expression of $h(\cdot, \Lambda)$, we evaluate
\begin{eqnarray}
\frac{\partial_\sigma h(s_1,\Lambda)}{h(s_1,\Lambda)}= \partial_\sigma \ln  h(s_1,\Lambda) = \frac{\Lambda^2-(3s_1+s_1^2)}{2s_1^2}  \, , \nonumber
\end{eqnarray}
so that by the definition of $s_1$ given in \eqref{def:s1s2} we have
\begin{eqnarray}
\pi_1 \partial_\sigma h(s_1,\Lambda) = \frac{\Lambda^2-(3s_1+s_1^2)}{2s_1^2 } \frac{1}{a_\Lambda  \lambda} \, . \nonumber
\end{eqnarray}
To show that the above quantity is nonnegative, it is enough to remember that under the condition that $s_2\leq \sigma^\dagger$, we have $s_1<s_2\leq \sigma^\dagger<\sigma^\star$. Moreover, the function $s_1 \mapsto (\Lambda^2-(3s_1+s_1^2))/(2s_1^2)$ is decreasing and strictly positive on $(0,\sigma^\star)$ since it has the same sign as $\partial_\sigma h(\cdot, \Lambda)$.
Thus, we have
\begin{eqnarray}
\partial_\sigma g_{\pi_1}(S_2) \geq \pi_1 \partial_\sigma h(s_1,\Lambda) \geq \frac{b_\Lambda}{ \lambda}  \, , \quad \mathrm{with} \quad b_\lambda=\frac{\Lambda^2-\left(3{\sigma^\dagger}+\sigma^{\dagger2}\right)} {2  \sigma^{\dagger2} a_\Lambda} >0\, , \nonumber
\end{eqnarray}
where $b_\Lambda$  only depends on $\Lambda$.
Choosing $C_\Lambda <b_\Lambda/(2B_\Lambda)$ in \eqref{eq:pi0} implies that $1-\pi_1<b_\Lambda/(2B_\Lambda \lambda)$
so that by Proposition \ref{prop:dgBound}, the absolute value of the term $\vert \partial_\sigma g_{\cancel{\pi_1}} \vert$ due to the linear dynamics of the processes arising from the partial initial conditions $(q_{\cancel{\pi_1,0}},g_{\cancel{\pi_1,0}})$ is bounded above with $\Vert \partial_\sigma g_{\cancel{\pi_1}} \Vert_{[U_1,\infty)} \leq b_\Lambda/(2 \lambda)$.
This shows that a full blowup happens in $S_2$ since
\begin{eqnarray}\label{eq:dgS1}
\partial_\sigma g(S_2) 
=
\partial_\sigma g_{\pi_1}(S_2) + \partial_\sigma g_{\cancel{\pi_1}}(S_2) 
\geq
\partial_\sigma g_{\pi_1}(S_2)-\vert \partial_\sigma g_{\cancel{\pi_1}}(S_2) \vert 
\geq
\frac{b_\Lambda}{2 \lambda}>0 \, .
\end{eqnarray}

$(iv)$ The blowup condition due to reset processes alone requires in $(i)$ that $C_\Lambda<1/(4A_\Lambda)$ and in $(ii)$ that $\pi_1 h^*>1/\lambda$.
The full blowup condition requires in $(iii)$ that $s_2 \leq \sigma^\dagger$, which is equivalent to $\pi_1 h^\dagger> 1/\lambda$, and that $C_\Lambda<b_\Lambda/(2B_\Lambda)$.
This shows that choosing 
\begin{eqnarray}\label{eq:choiceC}
l_\Lambda > \frac{1}{h^\dagger_\Lambda} \quad \mathrm{and} \quad 0 < C_\Lambda < \frac{1}{2} \min \left(\frac{1}{2A_\Lambda} ,\frac{b_\Lambda}{B_\Lambda} \right) \, ,
\end{eqnarray}
suffices to ensure that after a full blowup of size $\pi_1$ at time $S_1$ satisfying \eqref{eq:pi0}, the next blowup must happen in a well ordered fashion at time $S_2$ such that $S_1 \leq R_1 < S_2 \leq R_1+\sigma^\dagger$.
 \end{proof}


\subsection{Persistence of blowups for large interactions}

Proposition \ref{prop:nextBlowup} exhibits a criterion for a large enough full blowup to be followed by another full blowup in a well ordered fashion.
By well ordered, we mean that the next blowup must occur after the processes that last blew up have all reset.
In this context, for blowups to occur indefinitely in a sustained fashion, we further need to check that the size of the next blowup remains larger than that of the criterion of Proposition \ref{prop:nextBlowup}.
In this perspective, it is instructive to find an upper bound to the fraction of inactive processes at the trigger time of the next blowup $S_2$.
Indeed, processes that are inactive at $S_2$ will remain so throughout the blowup, and therefore cannot contribute to the next blowup of size $\pi_2$.
The next proposition shows that  the fraction of inactive process $P(S_2)$ can be made arbitrarily small for large enough $\lambda$.
This proposition only considers interaction parameter values $\lambda > \max(2 l_\Lambda,2/C_\Lambda)$, so that the constraint on the full blowup size in Proposition \ref{prop:nextBlowup} reduces to 
\begin{eqnarray}\label{eq:pi0'}
 \pi_1 > 1- \frac{C_\Lambda}{\lambda} > 1/2\, .
\end{eqnarray}

\begin{proposition}\label{prop:inactiveFrac}
Consider an interaction parameter $\lambda$ such that $\lambda>\max(2 l_\Lambda,2/C_\Lambda)$ and a dynamics for which a full blowup happens at time $S_1$ with size $ \pi_1 > 1- C_\Lambda/\lambda>1/2$.
If $\lambda$ further satisfies that $\lambda> 2 \sqrt{2 \pi \Lambda }  e^{\Lambda^2/2C_\Lambda}$ and if the refractory period is such that $\epsilon < C_\Lambda/(2\nu)$, then the next full blowup triggers at time $S_2<\infty$ with a fraction of inactive processes:
\begin{eqnarray}
P(S_2) \leq \frac{C_\Lambda}{2\lambda} \, . \nonumber
\end{eqnarray}
\end{proposition}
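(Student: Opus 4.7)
\medskip
\noindent\textbf{Proof plan for Proposition \ref{prop:inactiveFrac}.}
The plan is to decompose the instantaneous flux $g$ on $[U_1,S_2]$ into two linear contributions $g=g_{\pi_1}+g_{\cancel{\pi_1}}$ corresponding respectively to the reset mass $\pi_1$ at $R_1$ and to the surviving mass $(q_{\cancel{\pi_1},0},g_{\cancel{\pi_1},0})$ of $L^1$-norm at most $1-\pi_1\le C_\Lambda/\lambda$, and then to bound $P(S_2)$ by controlling each contribution over the appropriate subinterval. First, I would identify
\begin{eqnarray*}
P(S_2)=G(S_2)-G(\xi(S_2))=\int_{\sigma^\star}^{S_2} g(\sigma)\,\dd\sigma,
\end{eqnarray*}
where $\sigma^\star=\xi(S_2)$ is the unique solution in $[U_1,S_2]$ of $\Psi(\sigma^\star)=\Psi(S_2)-\epsilon$. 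The inequality $R_1<S_2$ from Proposition \ref{prop:nextBlowup} gives $\Psi(S_2)>\Psi(R_1)=\Psi(S_1)+\epsilon$, so $\sigma^\star>U_1$: no blowup-inactivated process remains in refractory at $S_2$, and $P(S_2)$ only gathers processes inactivated in the smooth dynamics past $U_1$. Moreover, the fixed-point identity $\nu\Psi(\sigma)=\sigma-\lambda G(\sigma)$ at $\sigma^\star$ and $S_2$ yields the self-consistent relation
\begin{eqnarray*}
S_2-\sigma^\star = \nu\epsilon + \lambda P(S_2).
\end{eqnarray*}

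Second, I would bound each contribution separately on the window $[\sigma^\star,S_2]\subseteq[U_1,S_2]$. Proposition \ref{prop:gBound} applied to the natural sub-initial conditions $(q_{\cancel{\pi_1},0},g_{\cancel{\pi_1},0})$ gives the uniform bound $g_{\cancel{\pi_1}}\le A_\Lambda(1-\pi_1)\le A_\Lambda C_\Lambda/\lambda$; integrating over $[\sigma^\star,S_2]$ and substituting $S_2-\sigma^\star=\nu\epsilon+\lambda P(S_2)$ yields $\int g_{\cancel{\pi_1}}\le A_\Lambda C_\Lambda\nu\epsilon/\lambda + A_\Lambda C_\Lambda\, P(S_2)$. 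For the reset contribution, $g_{\pi_1}$ vanishes on $[U_1,R_1]$, and on $[R_1,S_2]$ the bound $g_{\pi_1}(\sigma)\le \pi_1 h(\sigma-R_1,\Lambda)(1+A_\Lambda\sigma^\star)$ established in step $(ii)$ of the proof of Proposition \ref{prop:nextBlowup}, combined with the monotonicity of $h(\cdot,\Lambda)$ on $[0,\sigma^\dagger]$, yields
\begin{eqnarray*}
\int_{R_1}^{S_2} g_{\pi_1}(\sigma)\,\dd\sigma \le (1+A_\Lambda\sigma^\star)\,H(S_2-R_1,\Lambda)\le (1+A_\Lambda\sigma^\star)\,s_2\,h(s_2,\Lambda) \le \frac{2s_2(1+A_\Lambda\sigma^\star)}{\lambda},
\end{eqnarray*}
where the upper bound $S_2-R_1\le s_2$ and the identity $h(s_2,\Lambda)=1/(\pi_1\lambda)$ both come from Proposition \ref{prop:nextBlowup} and $\pi_1>1/2$.

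Combining both estimates and using $\nu\epsilon<C_\Lambda/2$ together with $C_\Lambda\le 1/(4A_\Lambda)$ from the preceding proposition, I obtain an inequality of the form
\begin{eqnarray*}
(1-A_\Lambda C_\Lambda)\,P(S_2) \le \frac{A_\Lambda C_\Lambda^2}{2\lambda} + \frac{2s_2(1+A_\Lambda\sigma^\star)}{\lambda},
\end{eqnarray*}
which closes at $P(S_2)\le C_\Lambda/(2\lambda)$ provided $s_2$ is sufficiently small in terms of $C_\Lambda$, $A_\Lambda$, $\sigma^\star$. The main obstacle—and the one requiring the stronger hypothesis $\lambda>2\sqrt{2\pi\Lambda}\,e^{\Lambda^2/(2C_\Lambda)}$—is to convert this implicit smallness into an explicit lower bound on $\lambda$. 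This would be carried out via the explicit form $h(s,\Lambda)=\Lambda e^{-(\Lambda-s)^2/(2s)}/\sqrt{2\pi s^3}$ and the lower bound $h(s,\Lambda)\ge \Lambda e^{-\Lambda^2/(2s)}/\sqrt{2\pi s^3}$ valid for $s\le 2\Lambda$: since $h(\cdot,\Lambda)$ is increasing on $[0,\sigma^\dagger]$, the inequality $s_2\le C_\Lambda$ is equivalent to $h(C_\Lambda,\Lambda)\ge 1/(\pi_1\lambda)$, and plugging in the above lower bound produces precisely a condition of the form $\lambda\gtrsim \sqrt{2\pi\Lambda}\,e^{\Lambda^2/(2C_\Lambda)}$, which matches the stated hypothesis up to the stated constants. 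A minor bookkeeping step is to verify that this smallness of $s_2$ indeed suffices to absorb the factor $(1+A_\Lambda\sigma^\star)$ and close the bound.
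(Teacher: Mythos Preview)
Your approach is valid in spirit but takes a significantly more circuitous route than the paper, and the final ``minor bookkeeping'' is not as minor as you suggest.

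The paper's argument is much more direct. It bounds $P(S_2)\le G(S_2)-G(U_1)$ (throwing away the sharper identity with $\xi(S_2)$) and then splits the integration interval at $R_1$ rather than splitting the flux into $g_{\pi_1}+g_{\cancel{\pi_1}}$. On $[U_1,R_1]$ only the surviving mass contributes, so $g=g_{\cancel{\pi_1}}\le 1/(4\lambda)$ by the choice of $C_\Lambda$ in Proposition~\ref{prop:nextBlowup}, and $R_1-U_1<2\nu\epsilon$. On $[R_1,S_2]$ the paper uses the \emph{trivial} bound $g\le 1/\lambda$, which holds simply because $S_2$ is by definition the first time $g$ reaches $1/\lambda$; no decomposition or renewal estimate is needed. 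This gives immediately
\[
G(S_2)-G(U_1)\le \frac{\nu\epsilon}{2\lambda}+\frac{s_2}{\lambda},
\]
and the explicit form of $h$ yields $s_2<\Lambda^2/\bigl(2\ln(\lambda/(2\sqrt{2\pi\Lambda}))\bigr)$, from which the hypothesis on $\lambda$ closes the bound directly.

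By contrast, your route introduces two unnecessary complications. First, the self-consistent relation $S_2-\sigma^\star=\nu\epsilon+\lambda P(S_2)$ forces $P(S_2)$ to appear on both sides and requires solving an implicit inequality. Second, bounding $g_{\pi_1}$ via the renewal estimate of Proposition~\ref{prop:nextBlowup} imports the factor $(1+A_\Lambda\sigma^\star)$, which is at least $1$; with the stated hypothesis you only get $s_2<C_\Lambda$, and $2C_\Lambda(1+A_\Lambda\sigma^\star)$ is generally \emph{not} bounded by the $3C_\Lambda/8$ or so that your final inequality demands. So the closing step would actually require a strictly stronger exponent in the lower bound on $\lambda$ than the one stated. (The paper's own constants are not perfectly consistent between statement and proof either, but its argument does not carry this extra multiplicative factor.)

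A minor but real issue: you redefine $\sigma^\star=\xi(S_2)$, but $\sigma^\star$ is already used throughout the paper for the maximizer of $h(\cdot,\Lambda)$, and you then invoke the bound $(1+A_\Lambda\sigma^\star)$ from Proposition~\ref{prop:nextBlowup} where $\sigma^\star$ has the latter meaning. Pick a different symbol for $\xi(S_2)$.
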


\begin{proof}
Let us first consider $s_2$, the upper bound to $S_2-R_1$ defined by \eqref{def:s1s2}.
Noticing that $h(\Lambda, \Lambda) = 1/(\sqrt{2 \pi \Lambda})$, by definition of $s_2$, if $\lambda>\sqrt{2 \pi \Lambda}/\pi_1$, then we necessarily have $s_2 < \Lambda$.
Moreover, if $s_2 < \Lambda$, we have
\begin{eqnarray}
\quad \frac{1}{\pi_1 \lambda}=h(s_2,\Lambda) = \frac{\Lambda}{\sqrt{2 \pi s_2^3}} \, e^{-\frac{(\Lambda-s_2)^2}{2 s_2}} > \frac{e^{-\frac{\Lambda^2}{2 s_2}} }{\sqrt{2 \pi \Lambda}}    \, , \nonumber
\end{eqnarray}
which together with $\pi_1>1/2$, implies the following upper bound on $s_2$:
\begin{eqnarray}
s_2 <   \frac{\Lambda^2}{2 \lambda \ln \left( \pi_1 \lambda / \sqrt{2 \pi \Lambda }\right)}  <  \frac{\Lambda^2}{2 \lambda \ln \left( \lambda / \big( 2\sqrt{2 \pi \Lambda } \big)\right)}  \, . \nonumber
\end{eqnarray}
Therefore the cumulative flux of inactivated processes between the last blowup exit time $U_1$ and the next blowup trigger time $S_2$ satisfies
\begin{eqnarray}
G(S_2)-G(U_1) &\leq& \Vert g \Vert_{U_1,R_1} (R_1-U_1) +\Vert g \Vert_{R_1,S_2} (S_2-R_1)  \, , \nonumber\\
&\leq& \frac{ \nu \epsilon}{2\lambda}  + \frac{\Lambda^2}{2 \lambda \ln \left( \lambda / \big(2\sqrt{2 \pi \Lambda } \big)\right)}\, , \nonumber
\end{eqnarray}
where we utilize that $\Vert g \Vert_{U_1,R_1}<1/(4\lambda)$ and $R_1-U_1<2\nu\epsilon$ by \eqref{eq:tauS0U0}.
Then for all $\epsilon < C_\Lambda/(2\nu)$, assuming that 
\begin{eqnarray}
\lambda> 2\sqrt{2 \pi \Lambda } e^{2\Lambda^2/C_\Lambda} \, , \nonumber
\end{eqnarray}
yields an upper bound on the fraction of inactive processes at the next blowup time $S_2$:
\begin{eqnarray}
P(S_2)=1-\int_0^\infty q(S_2,x) \, dx \leq G(S_2)-G(U_1) \leq \frac{C_\Lambda}{2\lambda} \, . \nonumber
\end{eqnarray}
\end{proof}

We are now in a position to show that for large enough interaction parameter $\lambda$, the next blowup will have a size $\pi_2$ that satisfies the criterion of Proposition \ref{prop:nextBlowup}.
This is a consequence of the following observations:
For well ordered blowup, the reset of a fraction  $\pi_1$ of processes at time $R_1$ is enough to trigger a blowup of finite size, which will be bounded below by a quantity $\Delta H_\Lambda/2>0$ that only depends on $\Lambda$.
Then, the duration of the blowup during which reset is halted is lower bounded by $\lambda \Delta H_\Lambda/2>0$.
In turn, for large enough $\lambda$, the probability of inactivation during blowup can be made exponentially small with respect to $\lambda$ by realizing that inactivation has an asymptotically constant, nonzero hazard rate function. 
Altogether, these observations allow us to form a self-consistent condition guaranteeing that $\pi_2$ satisfies the criterion of Proposition \ref{prop:nextBlowup}, leading to the following result:

\begin{proposition}\label{prop:persistence}
Assume that the refractory period satisfies $\epsilon < C_\Lambda/\nu$.
There exists a constant $L_\Lambda>0$, which only depends on $\Lambda$, such that for all interaction parameter $\lambda > L_\Lambda$, every full blowup at time $S_1$ with size $\pi_1 \geq 1-C_\Lambda/\lambda$ is directly followed by a full blowup at time $S_2$ with size $\pi_2 \geq 1-C_\Lambda/\lambda$.
\end{proposition}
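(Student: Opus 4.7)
The plan is to exploit the fact that no reset occurs on the blowup interval $[S_2, U_2)$ to write
\begin{eqnarray*}
1-\pi_2 \;=\; P(S_2) \;+\; \Vert q(U_2, \cdot) \Vert_1 \, ,
\end{eqnarray*}
and then bound both summands. Under the present hypotheses (enlarging $L_\Lambda$ if necessary so that Proposition~\ref{prop:inactiveFrac} applies), the first summand is already controlled: $P(S_2) \le C_\Lambda/(2\lambda)$. The work therefore reduces to showing that the active mass surviving the blowup satisfies $\Vert q(U_2, \cdot) \Vert_1 \le C_\Lambda/(2\lambda)$ for all $\lambda > L_\Lambda$.

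The first step is to produce a crude $\Lambda$-only lower bound $\pi_2 \ge \Delta_\Lambda > 0$. Since $\xi$ is constant on $[S_2, U_2)$, the renewal integral in \eqref{eq:gRenew} drops out there, and the flux is bounded below by the free-diffusion contribution of the $\pi_1$ cohort from its reset time $R_1$: $g(\sigma) \ge \pi_1\, h(\sigma - R_1, \Lambda)$. Property~1 combined with $\pi_1 \ge 1/2$ and $\lambda > 2 l_\Lambda = 2/h^\dagger_\Lambda$ forces this lower bound to exceed $1/\lambda$ on an interval around $\sigma^\star$ whose length $\Delta_\Lambda > 0$ depends only on $\Lambda$; the blowup must then persist at least that long, giving $\pi_2 \ge \Delta_\Lambda$.

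The second step promotes this coarse bound into the sharp exponential estimate via Property~2. On $[S_2, U_2)$ the dynamics reduces to absorbed drifted Wiener started from the natural initial datum $q(S_2, \cdot)$. Writing $q(S_2, \cdot)$ through the natural-conditions parametrization as a mixture of first-passage-conditioned kernels $\kappa(\tau, \cdot, \Lambda)/(1-H(\tau,\Lambda))$ and applying the Markov property of the underlying diffusion, the surviving mass at $U_2$ is the $\mu$-integral of the ratios $(1-H(\tau+\lambda\pi_2, \Lambda))/(1-H(\tau, \Lambda))$. The asymptotic hazard-rate lower bound in Property~2 then yields, by a Gronwall integration,
\begin{eqnarray*}
\Vert q(U_2, \cdot) \Vert_1 \;\le\; K_\Lambda\, e^{-r\lambda \pi_2} \;\le\; K_\Lambda\, e^{-r\lambda \Delta_\Lambda} \, ,
\end{eqnarray*}
with $r, K_\Lambda > 0$ depending only on $\Lambda$. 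Choosing $L_\Lambda$ large enough that $K_\Lambda e^{-r\lambda\Delta_\Lambda} \le C_\Lambda/(2\lambda)$ for all $\lambda > L_\Lambda$ closes the loop: $1 - \pi_2 \le C_\Lambda/\lambda$.

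The main obstacle will be making the Gronwall step genuinely uniform in the parametrizing measure $\mu$, since Property~2 is only an asymptotic hazard-rate statement. The remedy I would use is to split $\mu$ at a $\Lambda$-only threshold $\tau_0$ above which Property~2 directly delivers $(1-H(\tau+t,\Lambda))/(1-H(\tau,\Lambda)) \le K\, e^{-rt}$, and to handle $\tau \le \tau_0$ through the elementary monotonicity $1-H(\tau+t,\Lambda) \le 1-H(t,\Lambda)$ together with the positivity of $\inf_{\tau \le \tau_0}(1-H(\tau,\Lambda))$. The resulting constant $K_\Lambda$ then depends only on $\Lambda$, and the desired uniform bound holds across all natural initial conditions.
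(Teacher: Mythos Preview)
Your proposal is correct and follows essentially the same two-step strategy as the paper: first extract a crude $\Lambda$-only lower bound $\pi_2 \ge \Delta_\Lambda$ from the first-passage flux of the $\pi_1$ cohort on $[\sigma^\dagger,\sigma^\star]$, then bootstrap via the asymptotic hazard-rate bound (Property~2) to show the surviving mass $\Vert q(U_2,\cdot)\Vert_1$ decays exponentially in $\lambda\Delta_\Lambda$, closing the loop with $P(S_2)\le C_\Lambda/(2\lambda)$ from Proposition~\ref{prop:inactiveFrac}. The paper executes the exponential step by conditioning on survival past a fixed $\Lambda$-only threshold $\sigma^\sharp$ rather than through your natural-conditions parametrization by $\mu$, but the split at a threshold and the resulting estimates are the same.
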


\begin{proof}
The proof proceeds in three steps: $(i)$ we exhibit the asymptotically exponential regime of processes survival during blowup, $(ii)$ we show that this exponential regime is met for large enough interaction parameter $\lambda$, and $(iii)$ we exhibit a sufficient constant $L_\Lambda$.

$(i)$ We can always choose $\lambda$ large enough so that 
\begin{eqnarray}\label{eq:lambdaChoice}
\lambda \geq \max \big( 2 l_\Lambda, 2/C_\Lambda , 2 \sqrt{2 \pi \Lambda}  e^{\Lambda^2/2C_\Lambda} \big) \, .
\end{eqnarray}
Then by Proposition \ref{prop:nextBlowup}, the occurrence of a full blowup  in $S_1$ with size $\pi_1 \geq 1-C_\Lambda/\lambda$ triggers the next full blowup at a time $S_2$ with $R_1<S_2\leq R_1+\sigma^\dagger$. 
By Proposition \ref{prop:inactiveFrac}, the fraction of active process $1-P(S_2)$ at time $S_2$ is such that $1-P(S_2) > 1-C_\Lambda/2$. 
Let us consider a process $Y_\sigma$ that is active at time $S_2$ and let us denote by $\xi$ its next inactivation time.
Then for all $\sigma \geq \tau \geq S_1$, we have 
\begin{equation}
 \quad \Prob{\xi \geq \sigma \, \Big \vert \,  \xi \geq \tau} = \frac{\Prob{\xi \geq \sigma}}{\Prob{\xi \geq \tau}} = \frac{1-H(\sigma,\Lambda)}{1-H(\tau,\Lambda)} \, , \nonumber
\end{equation}
where the survival probabilities $1-H(\sigma,\Lambda)$ can be written in term of a hazard rate as
\begin{eqnarray}
1-H(\sigma,\Lambda) =\exp{\left(\int_0^{\sigma} \partial_\tau \ln \left( 1-H(\tau, \Lambda)\right) \, \dd\tau \right)} \, . \nonumber
\end{eqnarray}
The key observation is to notice that the hazard rate appearing above has the finite limit 
\begin{eqnarray}
\lim_{\sigma \to \infty} -\partial_\sigma \ln \left( 1-H(\sigma, \Lambda)\right) =  \lim_{\sigma \to \infty} \frac{h(\sigma, \Lambda)}{1-H(\sigma, \Lambda)} =  \frac{1}{2} \, , \nonumber
\end{eqnarray}
showing that for large time, inactivation asymptotically follows a memoryless exponential law for all active processes from the fraction $1-P(S_2)$.

$(ii)$ We now exploit the asymptotic exponential behavior of the hazard function associated to $1-H(\cdot, \Lambda)$ to show that $\pi_2$, the size of the next full blowup triggered in $S_2$, can be made arbitrary close to $1-P(S_2)$.
In this perspective, let us define the finite time
\begin{eqnarray}
\sigma^\sharp = \sup \left\{ \sigma \geq \sigma^\star \, \bigg \vert \,  \partial_\sigma \ln \left( 1-H(\sigma, \Lambda)\right)  <-\frac{1}{4}\right\} < \infty \, . \nonumber
\end{eqnarray}
For all $\sigma>\sigma^\sharp$, integrating the hazard rate inequality $\partial_\sigma \ln \left( 1-H(\sigma, \Lambda)\right) <-1/4$ yields
\begin{eqnarray}\label{eq:surv1}
\Prob{\xi \geq \sigma \, \Big \vert \,  \xi \geq \sigma^\sharp} =  \frac{1-H(\sigma)}{1-H(\sigma^\sharp)} \leq e^{-\frac{\sigma-\sigma^\sharp}{4}} \, , 
\end{eqnarray}
showing that the survival probability of active processes decays at least exponentially past time $S_2+\sigma^\sharp$.
Moreover, the fraction of processes $\Delta P_2$ that inactivates during the full blowup but before $S_2+\sigma^\sharp$ satisfies
\begin{eqnarray}
\Delta P_2 = P(S_2+\sigma^\sharp)-P(S_2)  \geq \pi_1 \int_{\sigma^\dagger}^{\sigma^\star} h(\sigma, \Lambda) \, \dd \sigma  \geq \Delta H_\Lambda/2 >0  \, . \nonumber
\end{eqnarray}
where we utilize the assumption that $\pi_1>1-C_\Lambda/\lambda>1/2$ and where we define the constant $\Delta H_\Lambda = \int_{\sigma^\dagger}^{\sigma^\star} h(\sigma, \Lambda) \, \dd \sigma>0$, which only depends on $\Lambda$.
The above lower bound follows from the fact that we choose $\lambda$ and $\pi_1$ so that $S_2 -R_1 \leq \sigma^\dagger < \sigma^\star \leq \sigma^\sharp$.
By \eqref{eq:surv1}, the lower bound $ \Delta H_\Lambda/2 \leq \Delta P_2 \leq \pi_2$ implies that 
\begin{eqnarray}
\Prob{\xi \geq \lambda \pi_2 \, \Big \vert \,  \xi \geq \sigma^\sharp} 
\leq
e^{-\frac{\lambda \pi_2-\sigma^\sharp}{4}} 
\leq
e^{-\frac{\lambda \Delta H_\Lambda -2 \sigma^\sharp}{8}}  \, . \nonumber
\end{eqnarray}
Thus, the fraction of inactivated process during blowup can be made arbitrarily close to $1-P(S_2)$ at the cost of choosing larger $\lambda$, as shown by:
\begin{eqnarray}
\pi_2
&=&
\Delta P_2 +\big(1-P(S_2+\sigma^\sharp)\big)  \Prob{\xi \leq \lambda \pi_2 \, \Big \vert \,  \xi \geq \sigma^\sharp} \, , \nonumber\\
&\geq&
\Delta P_2  +\big(1-P(S_2+\sigma^\sharp)\big)   \left(1-e^{-\frac{\lambda \Delta H_\Lambda -2 \sigma^\sharp}{8}} \right)   \, , \nonumber\\
&\geq&
 1-P(S_2)     - \big(1-P(S_2+\sigma^\sharp)\big)  e^{-\frac{\lambda \Delta H_\Lambda -2 \sigma^\sharp}{8}}  \xrightarrow{\lambda \to \infty} 1-P(S_2) \, . \nonumber
\end{eqnarray}

$(iii)$ It remains to exhibit a criterion for $\lambda$ ensuring that the blowup size $\pi_2$ is larger than $1-C_\Lambda/2$.
Such a criterion can be obtained by first observing that 
\begin{eqnarray}
\pi_2
\geq
 1-P(S_2)     - \big(1-P(S_2+\sigma^\sharp)\big)  e^{-\frac{\lambda \Delta H_\Lambda -2 \sigma^\sharp}{8}}  
\geq
 1-P(S_2)   -   e^{-\frac{\lambda \Delta H_\Lambda -2 \sigma^\sharp}{8}} \nonumber \, .
\end{eqnarray}
Then let us define $\lambda^\sharp>0$ as
\begin{eqnarray}
\lambda^\sharp = \sup \left\{\lambda \geq 0 \, \bigg \vert \,  \lambda \leq \frac{2}{\Delta H_\Lambda} \left(\sigma^\sharp-4 \ln{\left( \frac{C_\Lambda}{2\lambda} \right)}\right)  \right\} 
<  \frac{C_\Lambda}{2} e^{-\sigma^\sharp/4}
< \infty\, , \nonumber
\end{eqnarray}
such that for all $\lambda \geq \lambda^\sharp$, we have
\begin{eqnarray}
e^{-\frac{\lambda \Delta H_\Lambda -2 \sigma^\sharp}{8}} \leq  \frac{C_\Lambda}{2\lambda} \, .\nonumber
\end{eqnarray}
Utilizing the fact that $P(S_2)<C_\Lambda/(2\lambda)$  by Proposition \ref{prop:inactiveFrac}, we have
\begin{eqnarray}
\pi_2
 \geq
 1-P(S_2) -  \frac{C_\Lambda}{2\lambda} 
 \geq
1-  \frac{C_\Lambda}{\lambda}  \, . \nonumber
\end{eqnarray}
Thus choosing 
\begin{eqnarray}
\lambda \geq L_\Lambda =  \max \left(2 l_\Lambda,2/C_\Lambda,2 \sqrt{2 \pi \Lambda }  e^{\Lambda^2/2C_\Lambda}, \lambda^\sharp  \right) \nonumber
\end{eqnarray}
suffices to ensure that the next full blowup has size at least $1-C_\Lambda/\lambda$.
\end{proof}

The above proposition directly implies the existence and uniqueness of global explosive solutions under the following assumptions:

\begin{assumption}\label{mainAssump}
Consider a dPMF dynamics with refractory period $0 \leq \epsilon < C_\Lambda/\nu$, with interaction parameter $\lambda \geq L_\Lambda$, and with natural initial conditions such that $q_0$ contains an Dirac-delta mass $\pi_0 \delta_\Lambda$ with $\pi_0 \geq 1-C_\Lambda/\lambda$.
\end{assumption}

\begin{theorem}\label{th:globSol}
Under Assumption \ref{mainAssump} , the fixed-point problem \ref{def:fixedpoint_int} admits a unique global solution $\Psi$ defined over the whole half-line $\mathbbm{R}^+$. 
Moreover, this solution presents an infinite but discrete set of full blowups with trigger times $\{ S_k \}_{k \in \mathbbm{N}}$ and with sizes $\{ \pi_k \}_{k \in \mathbbm{N}}$ such that for all $k \geq 1$, $\pi_k \geq 1-C_\Lambda/\lambda$ and $S_{k+1}-U_k \geq \nu \epsilon$ where $U_k = S_k+\lambda \pi_k$.
\end{theorem}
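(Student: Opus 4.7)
The plan is to construct $\Psi$ by induction on the number of blowup episodes, leveraging Proposition \ref{prop:persistence} as the inductive step.  First, I would treat the initial condition as the ``exit state'' of a fictitious $0$-th blowup: the hypothesis that $q_0$ contains a Dirac mass $\pi_0 \delta_\Lambda$ with $\pi_0 \geq 1-C_\Lambda/\lambda$ precisely matches the condition needed to apply Proposition \ref{prop:persistence} to the situation at time $R_0=0$.  Applying Theorem \ref{th:smooth} to the natural initial condition $(q_0,g_0)$ yields a unique smooth $\Psi$ on $[0,S_1)$, where $S_1$ is the first blowup trigger; Proposition \ref{prop:persistence} guarantees $S_1<\infty$ and that the full-blowup Assumption \ref{assump3} holds there.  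Theorem \ref{th:jump} then uniquely extends $\Psi$ as a constant function across $[S_1,U_1]$, with $U_1 = S_1 + \lambda \pi_1$ and $\pi_1 \geq 1 - C_\Lambda/\lambda$ by Proposition \ref{prop:persistence}.

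Second, I would propagate the construction by induction.  Assume $\Psi$ has been uniquely constructed on $[0,U_k]$ with a full blowup exit of size $\pi_k \geq 1-C_\Lambda/\lambda$ at $U_k$.  The well-ordering part of Proposition \ref{prop:nextBlowup} ensures that the reset time $R_k = \tau(U_k)$ satisfies $U_k < R_k$ (strict when $\epsilon>0$) and that the post-reset ``natural'' state
\begin{equation*}
\bigl(q(R_k,\cdot),\{g(\xi-R_k)\}_{-\eta(R_k)\leq \xi <0}\bigr),
\end{equation*}
containing a new Dirac mass $\pi_k \delta_\Lambda$, again satisfies the hypotheses of Proposition \ref{prop:persistence}.  (Recall that the natural-initial-condition class is stable under dPMF dynamics, which is precisely what allows the \emph{a priori} estimates of Propositions \ref{prop:gBound}--\ref{prop:dgBound} to be reused at each step.)  Between $U_k$ and $R_k$ the renewal equation \eqref{eq:gRenew} has a smooth solution, extending $\Psi$ uniquely on $[U_k,R_k]$ by the fixed-point definition \eqref{eq:fixedPoint_int}.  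Applying Theorems \ref{th:smooth} and \ref{th:jump} one more time---now with the post-reset natural initial condition playing the role of $(q_0,g_0)$---produces $S_{k+1}$, $U_{k+1}$ and a blowup size $\pi_{k+1}\geq 1-C_\Lambda/\lambda$.  The inequality $S_{k+1} - U_k \geq \nu\epsilon$ is the content of \eqref{eq:tauS0U0} used in Proposition \ref{prop:nextBlowup}: the pre-reset flux is bounded by $1/(4\lambda) < 1/\lambda$, so no blowup can trigger before $R_k$, and $R_k - U_k \geq \nu\epsilon$.

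Third, I would verify that this iterative construction delivers $\Psi$ on the full half-line.  Each blowup contributes at least $\lambda\pi_k \geq \lambda - C_\Lambda$ to the time-changed coordinate, so $U_k \to \infty$, and consequently $S_{k+1} \to \infty$.  Between consecutive blowup episodes the construction is smooth by Theorem \ref{th:smooth}, so $\Psi$ is defined and unique on every finite subinterval of $\mathbbm{R}^+$, hence on all of $\mathbbm{R}^+$.  The discreteness of $\{S_k\}$ is immediate from $S_{k+1} \geq S_k + \lambda\pi_k \geq S_k + \lambda - C_\Lambda$.  Uniqueness is inherited stepwise from Theorems \ref{th:smooth} and \ref{th:jump}: any competing solution would have to agree on $[0,S_1)$, jump to the same $U_1$ (by the self-consistent characterization of $\pi_1$), and so on indefinitely.

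The conceptually subtle point---and what I see as the main obstacle---is not any single step but the \emph{uniformity} of the inductive hypothesis: one must check that at every reset time $R_k$ the state is again a natural initial condition of the precise form required by Proposition \ref{prop:persistence} (a Dirac mass at $\Lambda$ of weight $\pi_k$, plus an absolutely continuous natural part).  This is where preservation of the natural-initial-condition class under the time-changed dynamics is essential; it is what guarantees that the constants $A_\Lambda$, $B_\Lambda$, $C_\Lambda$, $L_\Lambda$ of Section \ref{sec:apriori} govern every stage uniformly, so the induction does not degrade.  Once this invariance is in place, the result follows by assembling the three stages (trigger, blowup, reset) at each iteration.
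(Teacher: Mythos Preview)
Your proposal is correct and follows essentially the same approach as the paper's proof: both initialize via Theorems \ref{th:smooth} and \ref{th:jump}, then iteratively apply Proposition \ref{prop:persistence} to propagate the lower bound $\pi_k \geq 1-C_\Lambda/\lambda$, derive uniqueness stepwise, and obtain the full half-line from the uniform lower bound $\lambda\pi_k \geq \lambda - C_\Lambda$ on blowup durations together with the well-ordered inequality $R_k - U_k \geq \nu\epsilon$ from \eqref{eq:tauS0U0}. Your emphasis on the preservation of the natural-initial-condition class is the right place to locate the only subtlety, and it is exactly what the paper relies on (implicitly) to make the constants uniform across stages.
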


\begin{proof}
By Theorem \ref{th:smooth}, the fixed-point problem \ref{def:fixedpoint_int} admits a local smooth solution up to time $S_1=\inf \left\{ \sigma>0 \, \vert \, \Psi'(\sigma)  \geq 0 \right\} $.
By the same arguments as in the proof of Proposition \ref{prop:nextBlowup}, the blowup condition must be satisfied in finite time: $S_1<\infty$. 
By the same arguments as in the proof of Proposition \ref{prop:persistence}, the blowup size satisfies $\pi_1>1-C_\Lambda/\lambda$.
From there on, one can iteratively apply Proposition \ref{prop:persistence} to define a global solution to the fixed-point problem \ref{def:fixedpoint_int} with an infinite number of blowups.
The uniqueness of the global solution follows from the uniqueness of the local solutions up to blowups and the uniqueness of the continuation process at the exit of a blowup.
Finally, this unique global solution $\Phi$ is defined over the whole half-line $\mathbbm{R}^+$ since all blowups are lower bounded by $1-C_\Lambda/\lambda$, so that all local solutions are defined over a domain of duration at least $\lambda(1-C_\Lambda/\lambda)$ during blowup episodes.
Finally, a blowup can only happen after resetting the processes that inactivated during the previous blowup, which must happens after a duration $\nu \epsilon$ by the proof of Proposition \ref{prop:nextBlowup}. 
\end{proof}

For $\epsilon>0$, Theorem \ref{th:globSol} directly implies the existence of global time change $\Phi$ parametrizing the original dPMF dynamics over the the half-line $\mathbbm{R}^+$ with the properties listed in Theorem \ref{th:main1}.
However, Theorem \ref{th:globSol} only implies that $T_{k+1} - T_k = \Psi(S_{k+1}) - \Psi(S_k) = \Psi(S_{k+1}) - \Psi(U_k) \geq \epsilon$, where $\{ T_k = \Psi(S_k)\}_{k \in \mathrm{N}}$ is the sequence of blowup times for the original dPMF dynamics.
Thus, in agreement with the remark following Proposition \ref{prop:horizon}, Theorem \ref{th:globSol} does not allow us to conclude about the existence of a global time change $\Phi$ in the limit $\epsilon \to 0^+$. 
Indeed, it could be that $\lim_{\epsilon \to 0^+} T_{k+1} - T_k = 0$ for all large enough $k$, which is compatible with accumulating the blowup times so that 
\begin{eqnarray}
T_\infty = \lim_{k \to \infty} T_k=\lim_{k \to \infty} \Psi(S_k)=\lim_{S \to \infty} \Psi(S)<\infty \, .
\end{eqnarray}
As a result, $\Phi=\Psi^{-1}$ would only define a solution time change $\Phi$ up the finite time $T_\infty$.
We address this point in the following section, where we characterize PMF dynamics (with zero refractory period) as limits of dPMF dynamics when $\epsilon \to 0^+$.

\section{Limit of vanishing refractory periods}\label{sec:limRef}

In this section, we show that explosive PMF dynamics can be defined consistently over the whole half-line $\mathbbm{R}^+$ for zero refractory period, i.e., with instantaneous reset.
First, we define PMF dynamics for zero refractory period $\epsilon =0$, which by contrast dPMF dynamics with $\epsilon > 0$, admit an explicit iterative construction.
Second, we introduce a series of continuity results showing that PMF dynamics are ``physical'' in the sense that they are recovered from dPMF dynamics in the limit $\epsilon \to 0^+$.
Finally, we provide the proofs for these continuity results, which essentially amounts to show that at all blowup trigger, exit, and reset times, the spatial densities of dPMF dynamics converge toward their PMF counterpart in $L_1$ norm.

\subsection{Solutions with zero refractory periods}

Theorem \ref{th:globSol} applies to the case of zero refractory period $\epsilon=0$, that is for PMF dynamics, assuming natural initial condition with normalized spatial component,
$\int_0^\infty q_0(x) \, \dd x = 1$, and no inactive processes.
In particular, for $\epsilon=0$, the fixed-point problem \ref{def:fixedpoint_int} admits a unique global solution for large enough interaction parameter $\lambda$ and for sufficiently large initial mass concentrated at $\Lambda$.
Let us denote this solution by $\Psi$ for simplicity.
As an inverse time change, $\Psi$ parametrizes a PMF dynamics with an infinite but discrete set of well ordered blowups with successive trigger times $\{ S_k \}_{k \in \mathbbm{N}}$ and corresponding size $\{ \pi_k \}_{k \in \mathbbm{N}}$.
A major benefit of considering  PMF dynamics is that the corresponding inverse time changes $\Phi$ admit an explicit iterative construction.
Specifically, the sequences $\{ S_k \}_{k \in \mathbbm{N}}$ and $\{ \pi_k \}_{k \in \mathbbm{N}}$ can be defined explicitely via the adjunction of the sequence of initial spatial conditions $\{q_{0,k}(\cdot)=q(S_k, \cdot) \}_{k \in \mathbbm{N}}$. 
 This explicit definition is made possible by the fact that for zero refractory period $\epsilon=0$, the renewal problems at stake loose their delayed character and can be solved analytically.
In particular, in between blowup episodes, we have the fundamental solution 
 \begin{eqnarray}
g(\sigma,x)=\sum_{k=0}^\infty h(\sigma,x+k\Lambda) \, , \nonumber
\end{eqnarray}
where  $g(\sigma,x)$ is the instantaneous flux of a process started in $x>0$ with zero delay reset.
This explicit fundamental solution allows one to define the announced sequence $\{S_k,\pi_k,q_{0,k}\}_{k \in \mathbbm{N}}$ in $\mathbbm{R}^+ \times (0,1] \times \mathcal{M}((0,\infty))$ as follows:

\begin{definition}\label{def:global0}
With the convention that $S_0=0$ and for a natural initial condition $q_0$ with Dirac-delta mass $\pi_0 \delta_\Lambda$ satisfying Assumption \ref{mainAssump}, let us define the sequence $\{S_k,\pi_k,q_{0,k}\}_{k \in \mathbbm{N}}$   by setting $q_{0,0}=q_0$ and iterating for all $k\geq 1$:
\begin{enumerate}
\item {\it Smooth dynamics}:
\begin{eqnarray}
g_k(\sigma) = \int_{0}^\infty g(\sigma,x) q_{0,k}(x) \, \dd x \, . \nonumber
\end{eqnarray}
\begin{eqnarray}
q_k(\sigma,y) = \int_0^{\sigma} \kappa(\sigma-\tau,y,\Lambda) g_k(\tau) \, \dd \tau  + \int_{0}^\infty \kappa(\sigma,y,x) q_{0,k}(x) \, \dd x \, . \nonumber
\end{eqnarray}
\item {\it Blowup trigger time}:
\begin{eqnarray}
S_{k+1} -S_k=  \inf \left\{ \sigma> 0\, \Big \vert \, g_k(\sigma) > 1/\lambda \right\} < \infty \, . \nonumber
\end{eqnarray}
\item  {\it Blowup size}:
\begin{eqnarray}
\pi_{k+1}= \inf \left\{ p \geq 0 \, \bigg \vert \, p > \int_{0^+}^\infty H(\lambda p, x)  q_k(S_{k+1},x) \, \dd x \right\} >0 \, . \nonumber
\end{eqnarray}
\item  {\it Blowup exit/reset distribution}: 
\begin{eqnarray}
q_{0,k+1}(y) =  \int_{0}^\infty \kappa(\lambda \pi_{k+1},y,x) q_k(S_{k+1},x) \, \dd x + \pi_{k+1} \delta_\Lambda(x) \, . \nonumber
\end{eqnarray}
\end{enumerate}
\end{definition}

The distributions $\{q_{0,k}\}_{k \in \mathbbm{N}}$ defined above  are in fact the reset distributions for PMF dynamics, $q_{0,k}=q(R_k,\cdot)$.
Moreover, with $\epsilon=0$, post-blowup reset is instantaneous so that $U_k=R_k$ and exit distributions are recovered by excluding the reset mass $\pi_{k}$: $q(R_k^-,\cdot)=q_{0,k}-\pi_k \delta_\Lambda$, .
The sequence $\{S_k,\pi_k,q_{0,k}\}_{k \in \mathbbm{N}}$ allows one to give an explicit piecewise representation of the global solution $\Psi$.
This representation makes use of the functions 
\begin{eqnarray}
\sigma \mapsto \Psi_k(\sigma) = \mathbbm{1}_{\{\sigma>0 \}} \left( \sigma - \lambda \int_0^x  G (\sigma,x)q_k(x) \, \dd x \right) / \nu \, ,  \nonumber
\end{eqnarray}
where $G(\cdot, x)$ denotes the cumulative flux function associated to  $g(\cdot, x)$.
Together with the trigger time $S_k$, the functions $\Psi_k$ allows one to specify the sequence of blowup time $\{ T_k \}_{k \in \mathbbm{N}}$ for the original dynamics as:
\begin{eqnarray}
T_0=0 \, , \quad T_{k+1} = T_k + \Psi_k\big(S_{k+1}\big) \, . \nonumber
\end{eqnarray}
We then obtain the following piecewise representation:

\begin{definition}\label{def:globaltime}
For zero refractory period $\epsilon=0$, the  global solution $\Psi$ is given by
\begin{eqnarray} 
\Psi(\sigma) = \sum_{k=0}^{\infty}  \mathbbm{1}_{\{ S_k<\sigma \le S_{k+1} \}} \left[ T_k  + \Psi_{k+1}\big(\sigma-(S_k + \pi_k) \big) \right] \, . \nonumber
\end{eqnarray}
\end{definition}

Observe that at this stage, although $\Psi$ is defined over the whole half-line $\mathbbm{R}^+$ by Theorem \ref{th:globSol}, we cannot invoke Proposition \ref{prop:horizon} to show that the corresponding time change $\Phi=\Psi^{-1}$ is defined over the whole half-line $\mathbbm{R}^+$.
However, we can establish this point by using the following estimate for inter-blowup times:

\begin{proposition}\label{prop:0est}
For natural initial condition $q_0$ with Dirac-delta mass $\pi_0 \delta_\Lambda$ satisfying Assumption \ref{mainAssump}, we have:
\begin{eqnarray}
\Psi(S_{k+1}) -\Psi(S_k) > \frac{1}{4 \nu} \inf \left\{ \sigma >0 \, \bigg \vert \, h(\sigma, \Lambda) > \frac{a_\Lambda}{\lambda} \right\} > 0 \, , \nonumber
\end{eqnarray}
where $a_\Lambda$ is a positive constant that only depends on $\Lambda$.
Thus, $T_\infty = \lim_{\sigma \to \infty} \Psi(\sigma) = \infty$ and $\Phi=\Psi^{-1}$ is defined over the whole half-line $\mathbbm{R}^+$.
\end{proposition}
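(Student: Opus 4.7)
My plan is to exploit the representation $\Psi(S_{k+1}) - \Psi(S_k) = \Psi(S_{k+1}) - \Psi(U_k)$, which holds because $\Psi$ is flat on every blowup interval $[S_k, U_k]$, and then show that on $[U_k, S_{k+1})$ the derivative $\Psi'(\sigma) = (1 - \lambda g(\sigma))/\nu$ is bounded away from zero over a short but quantifiable interval. Since $\epsilon = 0$ gives instantaneous reset, we have $R_k = U_k$, and the current state at $U_k$ decomposes as a Dirac mass $\pi_k \delta_\Lambda$ of freshly reset processes plus a residual natural initial condition of $L_1$ norm at most $1 - \pi_k \leq C_\Lambda/\lambda$ corresponding to processes that survived the previous blowup. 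This decomposition induces the splitting $g = g_{\pi_k} + g_{\cancel{\pi_k}}$ already used in the proof of Proposition~\ref{prop:nextBlowup}.

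The next step is to bound each contribution on a right neighborhood of $U_k$. For $g_{\cancel{\pi_k}}$, Proposition~\ref{prop:gBound} combined with the choice $C_\Lambda < 1/(4 A_\Lambda)$ from the constants of Proposition~\ref{prop:nextBlowup} yields the uniform bound $g_{\cancel{\pi_k}}(\sigma) \leq 1/(4\lambda)$. For $g_{\pi_k}$, the bound $g_{\pi_k}(\sigma) \leq \pi_k \tilde g(\sigma - U_k, \Lambda) \leq \pi_k h(\sigma - U_k, \Lambda)\bigl(1 + A_\Lambda \sigma^\star\bigr)$ derived in the proof of Proposition~\ref{prop:nextBlowup} (valid for $\sigma - U_k \in [0, \sigma^\star]$) lets me set $a_\Lambda = 1/\bigl(2(1 + A_\Lambda \sigma^\star)\bigr)$ and define
\begin{eqnarray*}
\delta = \inf\left\{ \sigma > 0 \, \big\vert \, h(\sigma, \Lambda) > a_\Lambda/\lambda \right\} > 0.
\end{eqnarray*}
On the interval $[U_k, U_k + \min(\delta, \sigma^\star)]$, these two bounds combine to give $g(\sigma) \leq 3/(4\lambda)$, hence $\Psi'(\sigma) \geq 1/(4\nu)$; integrating yields $\Psi(S_{k+1}) - \Psi(S_k) \geq \min(\delta, \sigma^\star)/(4\nu)$, and since Assumption~\ref{mainAssump} forces $\lambda$ large, $\delta < \sigma^\star$ so that the stated bound holds. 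Summing over $k$ then gives $T_\infty = \sum_{k \geq 0}(\Psi(S_{k+1}) - \Psi(S_k)) = \infty$, and because $\Psi$ is nondecreasing and continuous with $\Psi(0) = 0$, its right-continuous inverse $\Phi$ is defined on all of $\mathbbm{R}^+$.

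The main subtlety, and the only step that requires care beyond invoking prior estimates, is justifying that the uniform bound $\tilde g(\sigma, \Lambda) \leq h(\sigma, \Lambda)(1 + A_\Lambda \sigma^\star)$ applies on the entire interval $[0, \delta]$. This is where I rely on the largeness of $\lambda$ guaranteed by Assumption~\ref{mainAssump}: for $\lambda \geq L_\Lambda$, the threshold $a_\Lambda/\lambda$ is much smaller than $h^\star_\Lambda$, and since $h(\cdot, \Lambda)$ vanishes at zero and is increasing on $[0, \sigma^\star]$, we have $\delta < \sigma^\star$ automatically. Everything else is a routine integration and a telescoping sum, so no genuinely new analytic difficulty arises beyond recycling the flux decomposition and the renewal-based bounds already established in Sections~\ref{sec:apriori} and \ref{sec:global}.
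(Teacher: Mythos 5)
Your argument is correct, and it reaches the paper's bound by a slightly different mechanism. The paper works with cumulative quantities over the \emph{entire} inter-blowup gap: it writes $\nu\big(\Psi(S_{k+1})-\Psi(R_k)\big)=S_{k+1}-R_k-\lambda\big(G(S_{k+1})-G(R_k)\big)$, bounds the surviving contribution by $\Vert g_{\cancel{\pi_k}}\Vert\,(S_{k+1}-R_k)\leq (S_{k+1}-R_k)/(4\lambda)$, and controls the reset contribution by a convexity (chord) estimate — $g_{\pi_k}$ convex on $[R_k,S_{k+1}]$, vanishing at $R_k$ and at most $1/\lambda$ at $S_{k+1}$, hence $\Delta G_{\pi_k,k}\leq (S_{k+1}-R_k)/(2\lambda)$ — which gives $\nu\Delta\Psi\geq (S_{k+1}-R_k)/4$ and then invokes the previously established lower bound $S_{k+1}-R_k\geq s_1\geq \inf\{\sigma>0\,\vert\, h(\sigma,\Lambda)>a_\Lambda/\lambda\}$. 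You instead bound the \emph{instantaneous} flux pointwise on the explicit initial window $[U_k,U_k+\delta]$ with $\delta=\inf\{\sigma>0\,\vert\,h(\sigma,\Lambda)>a_\Lambda/\lambda\}$, using the same two ingredients (the $A_\Lambda$ bound of Proposition \ref{prop:gBound} with $C_\Lambda<1/(4A_\Lambda)$, and the renewal bound $\tilde g\leq h(\cdot,\Lambda)(1+A_\Lambda\sigma^\star)$ with $a_\Lambda=\big(2(1+A_\Lambda\sigma^\star)\big)^{-1}$), to get $g\leq 3/(4\lambda)$ and hence $\Psi'\geq 1/(4\nu)$ there; this buys you the same constant while dispensing with the convexity step, at the price of only exploiting the initial window rather than the full gap. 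Two small points you leave implicit and should state: $(1)$ the integration over $[U_k,U_k+\delta]$ contributes to $\Psi(S_{k+1})-\Psi(U_k)$ only because $S_{k+1}\geq U_k+\delta$, which follows immediately from your own bound $g\leq 3/(4\lambda)<1/\lambda$ on that window (the trigger condition cannot be met), together with the monotonicity of $\Psi$; and $(2)$ the identity $\Psi'=(1-\lambda g)/\nu$ on $[U_k,S_{k+1})$ rests on the fixed-point representation being realized by $\sigma\mapsto(\sigma-\lambda G(\sigma))/\nu$ between blowups, with $G$ smooth despite the instantaneous reset (the Dirac enters the birth term, not $G$). With those two sentences added, the proof is complete and equivalent in strength to the paper's.
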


\begin{proof}
Let us first observe that for $\epsilon=0$, we have $U_k=R_k$ and
\begin{eqnarray}
\nu \big( \Psi(S_{k+1}) -\Psi(S_k)  \big) 
&=&
\nu \big( \Psi(S_{k+1}) -\Psi(R_k)  \big) \, , \nonumber \\
&=&
S_{k+1} -R_k - \lambda \big( G(S_{k+1})-G(R_k) \big) \, . \nonumber
\end{eqnarray}
Then the proposed lower bound will follow from bounding above $G(S_{k+1})-G(R_k)$, the cumulative flux of inactivation  between the $k$-th blowup and the $(k\!+\!1)$-th blowup.
In this view, we split that cumulative flux as $G(S_{k+1})-G(R_k)=\Delta G_{\cancel{\pi_k},k}+\Delta G_{\pi_k,k}$, where $\Delta G_{\cancel{\pi_k},k}$ and $\Delta G_{\pi_k,k}$  represent the contributions of the processes that survive or inactivate during the last blowup, respectively.
As Assumption \ref{mainAssump} ensures that $\Vert g_{\cancel{\pi_k}} \Vert_{R_k,S_{k+1}} \leq 1/(4\lambda)$, we have
\begin{eqnarray}
\Delta G_{s,k} \leq \Vert g_{\cancel{\pi_k}} \Vert_{R_k,S_{k+1}}(S_{k+1}-R_k) \leq \frac{S_{k+1}-R_k}{4 \lambda} \, . \nonumber
\end{eqnarray}
Moreover, Assumption \ref{mainAssump} ensures that $g_{\pi_k}$ remains a convex function up to the next blowup so that $g_{\pi_k}(R_k)=0$ and $g_{\pi_k}(S_{k+1}) \leq 1/\lambda$ implies that
\begin{eqnarray}
\Delta G_{\pi_k,k} \leq  \frac{\Vert g_{\pi_k} \Vert_{R_k,S_{k+1}} (S_{k+1}-R_k)}{2} \leq \frac{S_{k+1}-R_k}{2 \lambda} \, . \nonumber
\end{eqnarray}
Considering the above bounds together leads to
\begin{eqnarray}
\nu \big( \Psi(S_{k+1}) -\Psi(S_k) \big)  = S_{k+1} -R_k - \lambda \big( \Delta G_{{\cancel{\pi_k}},k} + \Delta G_{\pi_k,k}  \big) 
&\geq&  (S_{k+1}-R_k)/4 \, . \nonumber
\end{eqnarray}
The announced lower bound follows from the lower bound $s_1$ defined in \eqref{def:s1s2} which satisfies
\begin{eqnarray}
S_{k+1}-R_k \geq s_1 \geq  \inf \left\{ \sigma >0 \, \bigg \vert \, h(\sigma, \Lambda) > \frac{a_\Lambda}{\lambda} \right\} = \frac{\Lambda^2}{2 \ln \left( \lambda \right)} + o(1/\ln \lambda) \, . \nonumber
\end{eqnarray}
\end{proof}

Proposition \ref{prop:0est} directly implies that $T_\infty=\lim_{S \to \infty} \Phi(S)=\infty$, so that $\Phi$ parametrizing a PMF dynamics over the whole half-line $\mathbbm{R}^+$.
We conclude by giving the explicit, iterative construction of explosive PMF dynamics with zero refractory period $\epsilon=0$.

\begin{theorem}
Under Assumption \ref{mainAssump}, PMF dynamics admits the density function 
$p(t,y) = \dd \Prob{ X_t  \leq x \vert X_t > 0} / \dd x$
\begin{eqnarray}
p(t,x) = \sum_{k=0}^{\infty}  \mathbbm{1}_{\{ T_k\le t < T_{k+1} \}} q_k\big(\Phi(t)-T_k, x \big) \, , \nonumber
\end{eqnarray}
where the sequence $(T_k,q_k)$ is specified in Definition  \ref{def:global0} and the time change $\Phi=\Psi^{-1}$ is specified in Definition \ref{def:globaltime}.
\end{theorem}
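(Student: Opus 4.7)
The plan is to combine two ingredients that are already established: (1) the transfer-of-density identity $p(t,\cdot) = q(\Phi(t),\cdot)$ on $\{X_t > 0\}$, which follows from the pathwise relation $X_t = Y_{\Phi(t)}$ underlying the time-change construction of Section \ref{sec:defDyn}; and (2) an identification of the time-changed density $q(\sigma,\cdot)$ on each interval between a reset and the following trigger with the explicitly defined $q_k$ of Definition \ref{def:global0}. The theorem then reduces to correctly aligning indices via the piecewise structure of $\Phi = \Psi^{-1}$ recorded in Definition \ref{def:globaltime}.

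First, I would establish that, for every $k \geq 0$, one has $q(\sigma,\cdot) = q_k(\sigma - U_k,\cdot)$ on the interval $[U_k, S_{k+1})$ between the $k$-th reset (equal to the exit $U_k$ since $\epsilon = 0$) and the $(k{+}1)$-th trigger $S_{k+1}$, measured in the global $\sigma$-coordinate. At $\sigma = U_k$ the density equals $q(U_k^-,\cdot) + \pi_k\delta_\Lambda$ because of the instantaneous reset of the fraction $\pi_k$ that inactivated during the blowup; by step 4 of Definition \ref{def:global0}, this is exactly $q_{0,k}$. On $[U_k, S_{k+1})$ the backward-delay function $\eta$ vanishes and no blowup occurs, so the time-changed PDE \eqref{eq:qPDE_int}--\eqref{eq:abscons_int} reduces to the absorbed drifted Wiener equation with instantaneous reset at $\Lambda$. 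Its fundamental solution is $g(\cdot,x)$, and the Duhamel representation \eqref{eq:DuHamel2} yields the explicit formula for $q_k$ in step 1 of Definition \ref{def:global0}; uniqueness of the underlying parabolic problem then gives the claimed identification. Step 2 of Definition \ref{def:global0} identifies $S_{k+1} - U_k$ with the blowup trigger from Theorem \ref{th:smooth} applied to the post-reset natural initial condition $q_{0,k}$, and step 3 identifies $\pi_{k+1}$ with the self-consistent exit condition of Theorem \ref{th:jump}.

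Second, I would align the time coordinates. Across the $k$-th blowup, $\Phi$ has a jump with $\Phi(T_k^-) = S_k$ and $\Phi(T_k) = U_k$, and on $[T_k, T_{k+1})$ the time change $\Phi$ is continuous and bijects onto $[U_k, S_{k+1})$. Under the bookkeeping of Definition \ref{def:globaltime}, the displacement $\Phi(t) - T_k$ coincides with the elapsed $\sigma$-time $\Phi(t) - U_k$ since the exit. Substituting into the density identity produces
\[
p(t,x) = q(\Phi(t),x) = q_k(\Phi(t) - T_k, x), \qquad t \in [T_k, T_{k+1}),
\]
and summing over $k$ weighted by the indicators yields the announced piecewise representation. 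The union $\bigcup_k [T_k, T_{k+1})$ covers $\mathbbm{R}^+$ by Proposition \ref{prop:0est}, so no part of the half-line is missed.

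The only real friction point is handling the Dirac-delta component $\pi_k\delta_\Lambda$ cleanly: one must check that absorbing the reset mass into $q_{0,k}$ is consistent with the representation in step 1 of Definition \ref{def:global0}, where both the regular and the singular components of $q_{0,k}$ evolve under the same free kernel $\kappa$ before additional mass is generated through the boundary flux $g_k$. This is a direct consequence of the linearity of the Fokker--Planck operator applied to the signed-measure-valued density; once it is made explicit, the remainder of the argument is bookkeeping that reconciles Definition \ref{def:global0}, Definition \ref{def:globaltime}, and Theorem \ref{th:globSol}.
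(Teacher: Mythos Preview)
The paper states this theorem without proof, treating it as a direct corollary of the preceding constructions (Definitions \ref{def:global0} and \ref{def:globaltime}) together with Theorem \ref{th:globSol} and Proposition \ref{prop:0est}. Your proposal correctly supplies the argument the paper leaves implicit: the transfer identity $p(t,\cdot)=q(\Phi(t),\cdot)$ from $X_t=Y_{\Phi(t)}$, the piecewise identification of the global $q$ with the locally defined $q_k$ via uniqueness of the parabolic problem on each inter-blowup interval, and the coordinate bookkeeping. This is the natural route and matches what the paper presumes.

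One small point worth flagging: your sentence ``the displacement $\Phi(t)-T_k$ coincides with the elapsed $\sigma$-time $\Phi(t)-U_k$'' equates a quantity mixing $t$- and $\sigma$-coordinates with a pure $\sigma$-displacement. This is really the paper's issue, not yours: the argument of $q_k$ in the displayed formula should be $\Phi(t)-\Phi(T_k)=\Phi(t)-U_k$ (using right-continuity of $\Phi$ at the jump) rather than $\Phi(t)-T_k$ as written. You have correctly identified what the argument must be for the identity to hold; it would strengthen your write-up to say so explicitly rather than absorbing the discrepancy into ``the bookkeeping of Definition \ref{def:globaltime}''.
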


\subsection{Global continuity in the limit of vanishing refractory periods}

Considering Theorem \ref{th:globSol} for zero refractory period $\epsilon=0$ allows for the explicit construction of explosive PMF dynamics in the large interaction regime. 
To justify that the thus-constructed dynamics are ``physical'', we need to check that these non-delayed solutions can be recovered from dPMF dynamics in the limit of vanishing refractory period $\epsilon \to 0^+$. 
In this perspective, let us consider some natural initial conditions for explosive PMF dynamics that satisfy Assumption \ref{mainAssump}.
Such initial conditions are entirely specified by the spatial component $q_0$, which includes an Dirac-delta mass of size $\pi_0$ at $\Lambda$.
By Theorem \ref{th:globSol}, the corresponding explosive PMF dynamics is fully parametrized by the inverse time-changed function $\Psi$ specified  in Definition \ref{def:global0}.
Given the same natural initial condition,  Theorem \ref{th:globSol} also guarantees the existence and uniqueness of explosive dPMF dynamics for small enough refractory period $\epsilon$ under Assumption \ref{mainAssump}.
Let us denote the corresponding inverse time change by $\Psi_\epsilon$.
By Theorem \ref{th:globSol}, $\Psi_\epsilon$ specifies a countable infinity of jumps with trigger times $\{S_{\epsilon,k} \}_{k \in \mathbbm{N}}$, exit times $\{U_{\epsilon,k} \}_{k \in \mathbbm{N}}$, and reset times $\{R_{\epsilon,k} \}_{k \in \mathbbm{N}}$ such that $0=R_{\epsilon,0}<S_{\epsilon,1}<U_{\epsilon,1}<R_{\epsilon,1}<S_{\epsilon,2}<U_{\epsilon,2}<R_{\epsilon,2}<\dots$. 
Our goal is to justify the following continuity result:

\begin{theorem}\label{thm:epsilonCont}
Given the same purely spatial natural initial condition satisfying Assumption \ref{mainAssump}, the dPMF solution $\Psi_\epsilon$ converges compactly toward the PMF solution $\Psi$ on $\mathbbm{R}^+$ and the dPMF blowup times $S_{\epsilon,k}$, $U_{\epsilon,k}$, $R_{\epsilon,k}$ converge toward their PMF counterparts $S_{k}$ and $U_{k}=R_{k}$ as well.
\end{theorem}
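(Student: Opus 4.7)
The plan is to proceed by induction on the blowup index $k\in\mathbbm{N}$, establishing at each step that the blowup times $(S_{\epsilon,k},U_{\epsilon,k},R_{\epsilon,k})$ converge to $(S_k,U_k,R_k)$ and that the spatial density $q_\epsilon$ evaluated at each of these times converges in $L_1$ to its PMF counterpart. Compact convergence of $\Psi_\epsilon\to\Psi$ on $\mathbbm{R}^+$ then follows by piecing together the stagewise convergences: on any bounded interval $[0,\Sigma]$ only finitely many blowups of the limit are involved (by Proposition~\ref{prop:0est}), and on the complement of arbitrarily small neighborhoods of the trigger times the inverse time change is recovered from $G[\eta_\epsilon]$ through the explicit formula of Definition~\ref{def:fixedpoint_int}, to which the convergence transfers.

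The inductive step decomposes naturally according to the three dynamical stages identified in the introduction. On the inter-blowup trigger stage $[R_{\epsilon,k},S_{\epsilon,k+1})$, the renewal equation~\eqref{eq:gRenew} defines $g_\epsilon$ in terms of the natural initial data at $R_{\epsilon,k}$ and the wrapped delay $\eta_\epsilon$; as $\epsilon\to 0^+$ one has $\eta_\epsilon\to 0$ uniformly on compact subsets of this interval, and continuity of the kernels $\kappa$ and $h$ transfers the inductively-assumed $L_1$ convergence of the reset data into local uniform convergence $g_\epsilon\to g$. Since the PMF limit satisfies the full-blowup condition $\partial_\sigma g(S_{k+1})>0$ at its threshold crossing (Assumption~\ref{assump3}), an implicit-function-theorem argument gives $S_{\epsilon,k+1}\to S_{k+1}$ and $q_\epsilon(S_{\epsilon,k+1},\cdot)\to q(S_{k+1},\cdot)$ in $L_1$. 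On the blowup stage $[S_{\epsilon,k+1},U_{\epsilon,k+1})$, no resets occur, so the renewal term in~\eqref{eq:DuHamel2} vanishes and the density is purely transported by the absorbed drifted-heat kernel $\kappa$. The self-consistent blowup size $\pi_{\epsilon,k+1}$ defined by~\eqref{eq:defjump} depends continuously on $q_\epsilon(S_{\epsilon,k+1},\cdot)$ in $L_1$, and the non-false-start condition---which holds uniformly in $\epsilon$ thanks to Proposition~\ref{prop:gBound} combined with the bound $1-\pi_{\epsilon,k+1}\leq C_\Lambda/\lambda$ from Theorem~\ref{th:globSol}---guarantees the fixed point is nondegenerate. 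Hence $\pi_{\epsilon,k+1}\to\pi_{k+1}$, $U_{\epsilon,k+1}\to U_{k+1}$, and the exit density converges in $L_1$. Finally, on the reset stage $[U_{\epsilon,k+1},R_{\epsilon,k+1})$ the estimate~\eqref{eq:tauS0U0} forces $R_{\epsilon,k+1}-U_{\epsilon,k+1}=O(\epsilon)$, so $R_{\epsilon,k+1}\to U_{k+1}=R_{k+1}$, and $L_1$ continuity of the short-time heat propagator yields convergence of $q_\epsilon(R_{\epsilon,k+1},\cdot)$ to $q(R_{k+1},\cdot)=q(U_{k+1}^-,\cdot)+\pi_{k+1}\delta_\Lambda$, restoring the inductive hypothesis at level $k+1$.

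The main obstacle is technical rather than conceptual: each of the three stagewise propagation maps must be shown to be jointly continuous in $\epsilon$ and in the $L_1$ norm on the space of natural initial conditions, and this joint continuity must survive the passage across the blowup exit, where $\eta_\epsilon$ carries a jump discontinuity of size $\lambda\pi_{\epsilon,k+1}$ that does \emph{not} vanish in the limit. Standard topological arguments on \emph{c\`adl\`ag} functions (e.g., via Skorokhod topologies) seem delicate to implement directly in the fixed-point problem~\ref{def:fixedpoint_int}. The observation that unlocks the difficulty is that on $[S_{\epsilon,k+1},U_{\epsilon,k+1})$ the renewal contribution in~\eqref{eq:DuHamel} is identically zero, so the dynamics in the blowup window are governed solely by the regular kernel $\kappa$ on a time interval of length $\lambda\pi_{\epsilon,k+1}$ bounded away from both $0$ and $\infty$---a regime in which uniform bounds on the Wiener first-passage kernel yield the required $L_1$ continuity directly. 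The two technical lemmas announced in Appendices~\ref{appA} and~\ref{appB} will formalize the requisite uniform $L_1$ bounds on $\kappa$ and its derivatives at times bounded away from $0$ and $\infty$, closing the argument.
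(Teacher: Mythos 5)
Your proposal is correct and follows essentially the same route as the paper: induction on the blowup index with the three-stage decomposition (inter-blowup, blowup episode, reset), $L_1$-continuity of each stagewise propagation map, nondegeneracy of the trigger crossing ($\partial_\sigma g(S_{k+1})>0$) and of the exit fixed point (non-false-start bound $g(U_{k+1})\leq 1/(4\lambda)$ via Proposition \ref{prop:gBound}), the observation that resets vanish during the blowup window so only the kernel $\kappa$ on times bounded away from $0$ and $\infty$ is involved, and compact convergence from $R_{\epsilon,k}\to\infty$. The only differences are matters of detail the paper fills in (e.g., the bound $\xi'_\epsilon\leq 1$ near the trigger time and the square-root integrability of $\xi'_\epsilon$ needed to kill the reset contribution on $[U_{\epsilon,k+1},R_{\epsilon,k+1})$), which your appeal to the appendix lemmas correctly anticipates.
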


The proof of the above Proposition relies on a simple recurrence argument that makes use of four continuity results, which we first give without proof.
The first set of results, comprising Proposition \ref{prop:convEps1} and  Proposition \ref{prop:convEps2}, deal with the continuity of dPMF dynamics in between blowup episodes in the limit of vanishing refractory period $\epsilon \to 0^+$.
These results essentially follow from the uniform boundedness of the various functions involved in the integral representation of the density of the dPMF dynamics.
Proposition \ref{prop:convEps1} bears on the continuity of the cumulative fluxes, which boils down to the continuity of blowup trigger times with respect to the initial conditions for the $L_1$ norm.

\begin{proposition}\label{prop:convEps1}
Consider  some natural initial conditions $(q_{\epsilon,0},g_{\epsilon,0})$ in $\mathcal{M}((0,\infty)) \times \mathcal{M}([-\xi_{\epsilon,0},0))$ such that $q_{\epsilon,0} \to q_0$ in $L_1$ norm when $\epsilon \to 0^+$, where $q_0$ is a probability density with Dirac-delta mass $\pi_0 \delta_\Lambda$.
Then under Assumption \ref{mainAssump}, we have:

$(1)$  $\lim_{\epsilon \to 0^+}S_{\epsilon,1} = S_1$ where $S_{\epsilon,1}$ and $S_1$ denotes the next blowup trigger times of the time changes $\Psi_\epsilon$ and $\Psi$, respectively.

$(2)$ for all $k$ in $\mathbbm{N}$, the $k$-iterated derivatives of $G_\epsilon$ and $G$ satisfy $G^{(k)}_\epsilon \to G^{(k)}$ uniformly on $[0,S_1+\lambda/3]$.
\end{proposition}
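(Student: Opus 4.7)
The plan is to combine the uniform \emph{a priori} regularity estimates for natural initial conditions with a Gronwall-type continuity argument for the renewal equation, and then to exploit the transversality provided by the full-blowup assumption at $S_1$ to extend uniform convergence past the trigger time. I first establish joint precompactness. By Propositions \ref{prop:gBound} and \ref{prop:dgBound}, the fluxes $g_\epsilon$ and derivatives $\partial_\sigma g_\epsilon$ are bounded by a constant depending only on $\Lambda$ and the total initial mass (at most one). Iteratively differentiating the renewal equation \eqref{eq:gRenew} and invoking the smoothness of the first-passage density $h(\cdot,\Lambda)$ on $(0,\infty)$ yields uniform $C^k$ bounds on $G_\epsilon$ for every $k$, so Arzel\`a--Ascoli provides precompactness.

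Next, I would identify the limit on $[0, S_1 - \delta]$ for arbitrary $\delta > 0$. By continuity of $g$ and the definition of $S_1$, there exists $c_\delta > 0$ with $g(\sigma) \leq 1/\lambda - c_\delta$, so $\Psi'(\sigma) = (1 - \lambda g(\sigma))/\nu \geq c_\delta \lambda/\nu$ on this interval. A subsequential limit of $G_\epsilon$ inherits the same uniform positivity of its associated $\Psi'$, which bounds $\eta_\epsilon(\sigma) \leq C_\delta \epsilon$ and forces $\xi_\epsilon \to \mathrm{id}$ uniformly. Subtracting the renewal equations \eqref{eq:DuHamel} for $G_\epsilon$ and $G$, and bounding the difference using the $L_1$ convergence $q_{\epsilon,0} \to q_0$, the sup-norm convergence $\xi_\epsilon \to \mathrm{id}$, and the uniform Lipschitz regularity of $g_\epsilon$, a Gronwall estimate pins every subsequential limit to $G$. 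Differentiating and iterating gives uniform $C^k$ convergence on $[0, S_1 - \delta]$.

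The full-blowup Assumption \ref{assump3} ensures $\partial_\sigma g(S_1) > 0$, so $g$ crosses the critical value $1/\lambda$ transversally at $S_1$. Combined with $C^1$ convergence of $g_\epsilon$ to $g$ on $[0, S_1 - \delta]$ for arbitrarily small $\delta$, an elementary implicit-function argument gives $S_{\epsilon,1} \to S_1$, which proves part (1) and extends the uniform convergence to $[0, S_1]$. For the extension onto $[S_1, S_1 + \lambda/3]$, Proposition \ref{prop:persistence} gives the blowup size $\pi_1 > 1 - C_\Lambda/\lambda > 1/2$, so the PMF blowup interval has length $\lambda \pi_1 > \lambda/2$ and strictly contains $[S_1, S_1 + \lambda/3]$; the same holds for the dPMF blowup once $\epsilon$ is small enough. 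During the blowup, $\xi_\epsilon$ is constant and no new reset contributions enter \eqref{eq:DuHamel}; $G_\epsilon$ then evolves purely via the first-passage flux of an absorbed diffusion whose initial datum $q_\epsilon(S_{\epsilon,1}, \cdot)$ converges to $q(S_1, \cdot)$ in $L_1$ via the Duhamel representation \eqref{eq:DuHamel2} and the previous step. Propagating this continuity through the smooth first-passage kernel gives uniform $C^k$ convergence on all of $[0, S_1 + \lambda/3]$.

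The principal obstacle is controlling $\eta_\epsilon$ near the trigger time, where $\Psi_\epsilon'$ vanishes and the crude bound $\eta_\epsilon \leq \epsilon/\inf \Psi_\epsilon'$ degenerates. The plan sidesteps this by retreating to $[0, S_1 - \delta]$ and then leveraging the transversality $\partial_\sigma g(S_1) > 0$; this strategy relies essentially on the full-blowup Assumption \ref{assump3} and would break without it.
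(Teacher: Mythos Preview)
Your overall architecture is sound, but the argument as written contains a circularity at the crucial step of controlling $\eta_\epsilon$. You work on the fixed interval $[0,S_1-\delta]$, where you know the \emph{limit} flux $g$ stays below $1/\lambda - c_\delta$, and then assert that ``a subsequential limit of $G_\epsilon$ inherits the same uniform positivity of its associated $\Psi'$, which bounds $\eta_\epsilon(\sigma) \leq C_\delta\epsilon$.'' But the bound $\eta_\epsilon \leq \epsilon/\inf\Psi_\epsilon'$ requires $g_\epsilon$, not $g$, to stay below $1/\lambda$ on that interval; you do not yet know that $S_{\epsilon,1} > S_1-\delta$, so a priori $g_\epsilon$ could cross $1/\lambda$ well before $S_1-\delta$ and $\eta_\epsilon$ could jump. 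Identifying the subsequential limit as $G$ via Gronwall already requires $\xi_\epsilon\to\mathrm{id}$, so you cannot use properties of the identified limit to obtain that control. The paper breaks this loop by introducing the $\epsilon$-dependent threshold $\tilde S_{\epsilon,1}=\inf\{\sigma:g_\epsilon(\sigma)>(1-\sqrt{\nu\epsilon})/\lambda\}$, on which one has the direct bound $\eta_\epsilon\leq\sqrt{\nu\epsilon}$ by construction, then proves convergence on $[0,\min(S_1,\underline S_1)]$ with $\underline S_1=\liminf \tilde S_{\epsilon,1}$, and only afterwards shows $\underline S_1=S_1$. Your route can be repaired by a continuation argument (let $T^*$ be the supremum of times up to which uniform convergence holds; show $T^*\geq S_1$ by extending past any $T^*<S_1$ using the uniform Lipschitz bound on $g_\epsilon$), but that argument is not in your text.

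A second, smaller issue: for the extension onto $[S_1,S_1+\lambda/3]$ you invoke $L_1$ convergence of $q_\epsilon(S_{\epsilon,1},\cdot)$ to $q(S_1,\cdot)$ via the Duhamel representation. That convergence is exactly Proposition~\ref{prop:convEps2}, whose proof in the paper relies on the present proposition (in particular on $S_{\epsilon,1}\to S_1$ and on the uniform convergence of $G_\epsilon$ established here), so the dependence is circular. The paper instead extends directly at the level of the renewal equation for $G_\epsilon$: on the blowup interval the backward function $\xi_\epsilon$ freezes, so the renewal integral acquires only an extra term over $[S_1,\min(\sigma,S_{\epsilon,1})]$, whose $L^\infty$ norm is controlled by $|S_{\epsilon,1}-S_1|$ times uniform bounds on $h$ and $G_\epsilon$. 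This avoids any appeal to spatial $L_1$ convergence of the densities.
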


Proposition \ref{prop:convEps1} establishes that PMF dynamics are locally recovered from dPMF dynamics in the limit of vanishing refractory period, at least up to the first blowup episode.
Then, a natural strategy to prove Proposition \ref{thm:epsilonCont} is to apply Proposition \ref{prop:convEps1} iteratively after each blowup episodes in conjunction with some continuity result about the blowup episodes.
It turns out that these continuity results will hold with respect to the $L_1$ norm of the spatial densities of the surviving processes at trigger, exit, and reset times.
Thus, we also need to make sure that the density of surviving processes at blowup trigger time $q_\epsilon(S_{\epsilon,1},\cdot)$ converges toward $q(S_1, \cdot)$ for the $L_1$ norm,
as stated in the following proposition: 
 
  \begin{proposition}\label{prop:convEps2}
Consider  some natural initial conditions $(q_{\epsilon,0},g_{\epsilon,0})$ in $\mathcal{M}((0,\infty)) \times \mathcal{M}([-\xi_{\epsilon,0},0))$ such that $q_{\epsilon,0} \to q_0$ in $L_1$ norm when $\epsilon \to 0^+$, where $q_0$ contains a Dirac-delta mass $\pi_0 \delta_\Lambda$.
Then under Assumption \ref{mainAssump}, we have $\lim_{\epsilon \to 0^+} \Vert q_\epsilon(S_{\epsilon,1}, \cdot) - q(S_1, \cdot) \Vert_1 = 0$.
\end{proposition}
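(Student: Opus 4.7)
The plan is to use the Duhamel representation \eqref{eq:DuHamel2} for $q_\epsilon(S_{\epsilon,1}, \cdot)$ together with the uniform convergences of Proposition \ref{prop:convEps1} to establish $L_1$ convergence term-by-term. Writing
\begin{equation*}
q_\epsilon(S_{\epsilon,1}, y) = \int_0^\infty \kappa(S_{\epsilon,1}, y, x)\, q_{\epsilon,0}(dx) + \int_0^{S_{\epsilon,1}} \kappa(S_{\epsilon,1}-\tau, y, \Lambda)\, dG_\epsilon(\xi_\epsilon(\tau)),
\end{equation*}
the target is the analogous expression for $q(S_1, \cdot)$ in which $\xi(\tau) = \tau$. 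The key auxiliary estimate is that the delay vanishes in the limit on compact sub-intervals of $[0,S_1)$: from the identity $\eta_\epsilon(\sigma) = \nu\epsilon + \lambda \int_{\xi_\epsilon(\sigma)}^\sigma g_\epsilon(u)\, du$, the strict bound $g_\epsilon < 1/\lambda$ on $[0, S_{\epsilon,1})$ combined with the uniform convergence $g_\epsilon \to g$ from Proposition \ref{prop:convEps1}(2) yields $\eta_\epsilon \to 0$ uniformly on $[0, S_1 - \delta]$ for every $\delta > 0$, and similarly $\xi'_\epsilon \to 1$.

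For the initial-condition integral, I would apply Fubini together with $\int \kappa(s, y, x)\, dy \leq 1$ to absorb the total variation convergence $q_{\epsilon,0} \to q_0$, and then use $\Vert \kappa(S_{\epsilon,1}, \cdot, x) - \kappa(S_1, \cdot, x) \Vert_1 \to 0$ pointwise in $x > 0$ with dominated convergence against $q_0$ to handle the time shift $S_{\epsilon,1} \to S_1$. The Dirac mass $\pi_0 \delta_\Lambda$ contained in $q_0$ is harmless since $\Lambda, S_1 > 0$ make $\kappa(S_1, \cdot, \Lambda)$ a smooth $L_1$ function of $y$.

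For the reset integral, I would fix $\delta > 0$ small, split the $\tau$-integration at $S_1 - \delta$, and on $[0, S_1 - \delta]$ exploit the uniform convergences $\eta_\epsilon \to 0$, $g_\epsilon \to g$, $\xi'_\epsilon \to 1$ together with the $L_1$-boundedness of $\kappa(s, \cdot, \Lambda)$ in $y$ uniformly in $s$ bounded below; Fubini and dominated convergence then give $L_1$ convergence on this subinterval. The tail mass on $[S_1 - \delta, S_{\epsilon,1}]$ is bounded by $G_\epsilon(\xi_\epsilon(S_{\epsilon,1})) - G_\epsilon(\xi_\epsilon(S_1 - \delta))$, which converges to $G(S_1) - G(S_1 - \delta) = O(\delta/\lambda)$ and is sent to zero by taking $\delta \to 0^+$. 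The principal obstacle lies precisely here: $\eta_\epsilon$ cannot be expected to vanish uniformly on the closed interval $[0, S_{\epsilon,1}]$ because $g_\epsilon \to 1/\lambda$ at the trigger point, so the cutoff strategy is essential, and it simultaneously absorbs the kernel singularity of $\kappa(S_{\epsilon,1} - \tau, \cdot, \Lambda)$ near $\tau = S_{\epsilon,1}$ into a uniform $L_1$ tail-mass estimate that Proposition \ref{prop:convEps1} makes quantitatively controllable.
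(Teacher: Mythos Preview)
Your proposal is correct and shares the paper's overall architecture---Duhamel representation plus a cutoff strictly before the trigger time---but carries it out by a somewhat different route. The paper splits the $L_1$ difference into five explicit pieces $\Delta q_1,\ldots,\Delta q_5$ with a fixed cutoff $U = S_1 - s_1/3$: on $[0,U]$ it uses integration by parts to reduce to the uniform convergence $G_\epsilon \circ \xi_\epsilon \to G$ (rather than density-level convergence), and on the near-trigger zone $[U, S_{\epsilon,1}]$ it invokes Lemma~\ref{lem:xiBound} to secure $0 \leq \xi'_\epsilon \leq 1$ together with the kernel-derivative bound of Lemma~\ref{lem:supbound}. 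Your approach is more direct: on $[0, S_1-\delta]$ you work with $\xi'_\epsilon\, g_\epsilon(\xi_\epsilon) \to g$ under a $\delta$-dependent bound on $\xi'_\epsilon$ (available since $g_\epsilon \leq c_\delta < 1/\lambda$ there) and the $L_1$ continuity of $\sigma \mapsto \kappa(\sigma,\cdot,\Lambda)$ for $\sigma$ bounded away from zero, while the tail is handled by the crude measure bound $\int_0^\infty \kappa\, dy \leq 1$ combined with $G_\epsilon(\xi_\epsilon(S_{\epsilon,1})) - G_\epsilon(\xi_\epsilon(S_1-\delta)) \leq G_\epsilon(S_{\epsilon,1}) - G_\epsilon(\xi_\epsilon(S_1-\delta))$. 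This sidesteps both appendix lemmas. One small imprecision: your tail quantity need not \emph{converge} to $G(S_1)-G(S_1-\delta)$ since $\eta_\epsilon(S_{\epsilon,1})$ is not shown to vanish; only its $\limsup$ is bounded by that value, but that is all the argument requires.
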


The second set of results, comprising Proposition \ref{prop:convEps1} and  Proposition \ref{prop:convEps2}, bears on the continuity of dPMF and PMF dynamics during blowup episodes.
Proposition \ref{prop:convEps1} states that given a blowup trigger time $S_1$, the next exit blowup time $U_1=S_1+\lambda \pi_1$ and the distribution of surviving processes $q(U_1,\cdot)$ continuously depend on the distribution of surviving processes $q(S_1,\cdot)$ at trigger time with respect to the $L_1$ norm.
Specifically:

\begin{proposition}\label{prop:BlowupCont1} 
Suppose $S_1$ marks the first blowup trigger time after $S_0$ under Assumption \ref{mainAssump}.
Then there exists a neighborhood $\mathcal{N}$ of $q(S_1, \cdot)$ 
such that the maps 
\begin{eqnarray}
q \mapsto \pi_1[q]  \quad \mathrm{and} \quad q \mapsto \left(y \mapsto  \int_0^\infty \kappa(\lambda \pi_1[q], y , x) q(x)\, \dd x \right) \nonumber
\end{eqnarray}
are continuous on $\mathcal{N}$ with respect to the $L_1$ norms.
\end{proposition}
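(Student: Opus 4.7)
The plan is to treat $\pi_1[q]$ as an implicit zero of the scalar functional $\Theta(p,q) := p - \int_0^\infty H(\lambda p, x)\, q(x)\,\dd x$ and then push the resulting continuity through the heat-kernel map defining the exit distribution. Joint continuity of $\Theta : \mathbbm{R}^+ \times L^1((0,\infty)) \to \mathbbm{R}$ is immediate: $\|H\|_\infty \leq 1$ yields $1$-Lipschitz dependence on $q$, and dominated convergence gives continuity in $p$. Writing $\bar q := q(S_1, \cdot)$ and $\bar\pi := \pi_1[\bar q]$ and differentiating under the integral yields
\begin{equation*}
\partial_p \Theta(\bar\pi, \bar q) \;=\; 1 - \lambda \int_0^\infty h(\lambda \bar\pi, x)\, \bar q(x)\,\dd x \;=\; 1 - \lambda\, g(U_1^-),
\end{equation*}
where the second equality uses that no reset contributes during a blowup. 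The \emph{a priori} bound of Proposition \ref{prop:gBound} applied to the post-blowup natural state gives $g(U_1^-) \leq A_\Lambda \|q(U_1, \cdot)\|_1 = O(1/\lambda)$, strictly below $1/\lambda$ under Assumption \ref{mainAssump}; hence the crossing is transverse, $\partial_p \Theta(\bar\pi, \bar q) > 0$. Combined with $\Theta(0, \bar q) = \partial_p\Theta(0, \bar q) = 0$ and $\partial_p^2 \Theta(0^+, \bar q) = -\lambda^2 \partial_\sigma g(S_1) < 0$ (the full-blowup Assumption \ref{assump3}), this lets me pick $\delta, c > 0$ with $\Theta(\bar\pi - \delta, \bar q) < -c$ and $\Theta(\bar\pi + \delta, \bar q) > c$. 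Joint continuity transports both strict inequalities to a sufficiently small $L^1$-neighborhood $\mathcal{N}$ of $\bar q$, forcing $\pi_1[q] \in (\bar\pi - \delta,\, \bar\pi + \delta)$ for every $q \in \mathcal{N}$ and delivering continuity of $q \mapsto \pi_1[q]$.

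For the second map $\Phi[q](y) := \int_0^\infty \kappa(\lambda \pi_1[q], y, x)\, q(x)\,\dd x$, I would leverage joint continuity of the kernel operator $K_t q := \int_0^\infty \kappa(t, \cdot, x)\, q(x)\,\dd x$ from $(0,\infty) \times L^1$ to $L^1$. For $q_n \to q$ in $L^1$ and $t_n := \lambda \pi_1[q_n] \to t := \lambda \pi_1[q] > 0$ by the previous step, the decomposition
\begin{equation*}
\Phi[q_n] - \Phi[q] \;=\; K_{t_n}(q_n - q) \;+\; (K_{t_n} - K_t)\, q
\end{equation*}
separates the two regularities. The sub-Markov bound $\int_0^\infty \kappa(t, y, x)\,\dd y \leq 1$ yields the $L^1$-contraction $\|K_{t_n}(q_n - q)\|_1 \leq \|q_n - q\|_1 \to 0$, while strong $L^1$-continuity of the absorbing heat semigroup on $(0, \infty)$ sends $(K_{t_n} - K_t)q \to 0$ as $t_n \to t$. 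Composing with continuity of $\pi_1[\cdot]$ concludes continuity of $\Phi$ on $\mathcal{N}$.

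The main obstacle is the sign-chasing for $\Theta$ in the first step: an arbitrary $L^1$ perturbation of $\bar q$ could in principle spawn spurious zeros of $\Theta(\cdot, q)$ in $(0, \bar\pi)$ by altering the boundary behavior of $q$ near $x = 0$, which would make the infimum-definition of $\pi_1$ discontinuous under tiny perturbations. What saves the argument is that the perturbations to which the proposition is ultimately applied (those produced by Proposition \ref{prop:convEps2}) are themselves blowup-trigger states with $\partial_x q(\cdot, 0) = 2/\lambda$, so the quadratic lower bound $\Theta(p, q) \lesssim -c\,p^2$ near $p = 0$ propagates uniformly in $\mathcal{N}$. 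Making this structural input precise---perhaps by restricting $\mathcal{N}$ to an $L^1$-neighborhood intersected with a $C^1$-controlled class of trigger states---is where I expect the bulk of the technical work to live.
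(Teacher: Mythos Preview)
Your approach is correct and matches the paper's in its essential content: both hinge on the transversality $\partial_p\Theta(\bar\pi,\bar q)=1-\lambda g(U_1)>0$, and both derive continuity of the exit density from continuity of $\pi_1$ plus a kernel estimate. The execution differs in two places. First, where you run an elementary sign-chasing / intermediate-value argument, the paper invokes the Banach-space implicit function theorem directly, obtaining Fr\'echet differentiability (not just continuity) of $q\mapsto\pi_1[q]$ in one stroke; this is cleaner but yields the same local conclusion. Second, for the exit-density map the paper does not appeal to abstract semigroup continuity but instead uses an explicit pointwise bound (Lemma~\ref{lem:supbound}) of the form $\sup_x\int_0^\infty\sup_{\sigma_a\leq\sigma\leq\sigma_b}|\partial_\sigma\kappa(\sigma,y,x)|\,\dd y\leq M_{ab}<\infty$, which gives a quantitative Lipschitz-type estimate $\|\mathcal G[q_a]-\mathcal G[q_b]\|_1\leq\|q_a-q_b\|_1+M_{ab}\lambda|\pi_1[q_a]-\pi_1[q_b]|$; your semigroup argument is softer but equally valid. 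The subtlety you flag about spurious zeros of $\Theta(\cdot,q)$ near $p=0$ is genuine and is, in fact, glossed over in the paper: the implicit function theorem only produces a locally unique zero near $\bar\pi$, and the identification of this zero with the infimum-defined $\pi_1[q]$ tacitly relies on the perturbed states being trigger states themselves (so that $\Theta(\cdot,q)$ retains its quadratic negative profile near $0$). Your instinct to make this structural restriction explicit is sound; the paper simply does not address it.
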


As blowup resolutions involve dynamics without reset, there is no role for the refractory period in establishing the above result, which follows from the Banach-space version of the implicit function theorem.
Proposition \ref{prop:convEps2} directly implies that if $\lim_{\epsilon \to 0^+} \Vert q_\epsilon(S_{\epsilon,1}, \cdot) - q(S_1, \cdot) \Vert_1 = 0$, then
$\lim_{\epsilon \to 0^+} \pi_{\epsilon,1} = \pi_1$ and $\lim_{\epsilon \to 0^+} \Vert q_\epsilon(U_{\epsilon,1}, \cdot) - q(U_1^-, \cdot) \Vert_1 = 0$, where $q(U_1^-, \cdot)$ denotes the distribution of surviving processes for PMF dynamics just before reset at $U_1=R_1$.
In order to propagate these convergence results via recurrence, we then only need to check the $L_1$ convergence of the distribution of processes $q_\epsilon(R_{\epsilon,1},\cdot)$ toward $q(U_1,\cdot)=q(R_1,\cdot)$, allowing one to apply Proposition \ref{prop:convEps1} and \ref{prop:convEps2} anew with $q_\epsilon(R_{\epsilon,1},\cdot)$ and $q(U_1,\cdot)=q(R_1,\cdot)$ as natural  initial conditions.

\begin{proposition}\label{prop:BlowupCont2} 
Under Assumption \ref{mainAssump}, suppose that $R_{\epsilon,1}$ marks the first blowup reset time after $S_0$.
Then we have $\lim_{\epsilon \to 0^+} \Vert q_\epsilon(R_{\epsilon,1}, \cdot) - q(R_1, \cdot) \Vert_1 = 0$.
\end{proposition}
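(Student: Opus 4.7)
The strategy is to represent $q_\epsilon(R_{\epsilon,1},\cdot)$ via Duhamel's formula over the short refractory interval $[U_{\epsilon,1},R_{\epsilon,1}]$ and match each piece to its PMF counterpart $q(R_1,\cdot) = q(U_1^-,\cdot) + \pi_1\delta_\Lambda$. Step $(i)$ of the proof of Proposition \ref{prop:nextBlowup} already provides the uniform bound $R_{\epsilon,1} - U_{\epsilon,1} \leq 2\nu\epsilon$, so this interval vanishes with $\epsilon$. Meanwhile, combining Proposition \ref{prop:convEps2} applied at the first blowup with Proposition \ref{prop:BlowupCont1} yields $\pi_{\epsilon,1} \to \pi_1$ and $q_\epsilon(U_{\epsilon,1},\cdot) \to q(U_1^-,\cdot)$ in the $L_1$ norm. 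It remains to propagate this $L_1$ convergence across the short refractory interval and account for the delayed Dirac reset of mass $\pi_{\epsilon,1}$.

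Using the integral representation \eqref{eq:DuHamel2}, and observing that the Stieltjes measure $\dd G_\epsilon \circ \xi_\epsilon$ has a jump of size $\pi_{\epsilon,1}$ precisely at $R_{\epsilon,1}$ (since $\xi_\epsilon$ jumps from $S_{\epsilon,1}$ to $U_{\epsilon,1}$ there), I decompose
\begin{equation*}
q_\epsilon(R_{\epsilon,1}, y) = \int_0^\infty \kappa(s_\epsilon, y, x)\, q_\epsilon(U_{\epsilon,1}, x)\, \dd x \;+\; \mathcal{R}_\epsilon(y) \;+\; \pi_{\epsilon,1}\, \delta_\Lambda(y),
\end{equation*}
with $s_\epsilon = R_{\epsilon,1} - U_{\epsilon,1}$ and $\mathcal{R}_\epsilon$ the continuous-reset flow carried by $\dd G_\epsilon \circ \xi_\epsilon$ over $[U_{\epsilon,1},R_{\epsilon,1})$. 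Writing $P_s$ for the $L_1$-contractive sub-Markov semigroup of kernel $\kappa$, I bound the first piece via the triangle inequality,
\begin{equation*}
\|P_{s_\epsilon} q_\epsilon(U_{\epsilon,1},\cdot) - q(U_1^-,\cdot)\|_1 \leq \|q_\epsilon(U_{\epsilon,1},\cdot) - q(U_1^-,\cdot)\|_1 + \|P_{s_\epsilon} q(U_1^-,\cdot) - q(U_1^-,\cdot)\|_1,
\end{equation*}
whose two terms vanish by Proposition \ref{prop:BlowupCont1} and by strong $L_1$-continuity of $P_s$ at $0^+$, respectively. The second piece is controlled by its mass $\|\mathcal{R}_\epsilon\|_1 \leq G_\epsilon(S_{\epsilon,1}) - G_\epsilon(\xi_\epsilon(U_{\epsilon,1}))$, which equals the original-time increment $F_\epsilon(T_{\epsilon,1}^-) - F_\epsilon(T_{\epsilon,1}-\epsilon) = O(\sqrt{\epsilon})$ by the $1/\sqrt{T_{\epsilon,1}-t}$ blowup scaling of $f_\epsilon$. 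The third piece converges in total variation, $\|(\pi_{\epsilon,1}-\pi_1)\delta_\Lambda\|_1 = |\pi_{\epsilon,1}-\pi_1| \to 0$. Summing the three bounds yields $\|q_\epsilon(R_{\epsilon,1},\cdot) - q(R_1,\cdot)\|_1 \to 0$.

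The main obstacle is the strong $L_1$-continuity of the killed, drifted semigroup $P_s$ at $s=0^+$, which must be handled carefully because of the absorbing-boundary correction in $\kappa$. The plan is to exploit the image-method identity $\kappa(s,y,x) = \kappa_0(s,y-x) - e^{2x}\kappa_0(s,y+x)$ with $\kappa_0$ the free drifted Gaussian kernel, reducing the claim to two ingredients: the classical strong $L_1(\mathbb{R})$-continuity of the free drifted semigroup, and the $L_1(0,\infty)$-decay of the image contribution $\int_0^\infty e^{2x}\kappa_0(s,y+x)f(x)\,\dd x$ as $s\to 0^+$, obtained by Fubini and dominated convergence since $\int_0^\infty \kappa_0(s,y+x)\,\dd y \to 0$ pointwise in $x>0$. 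A secondary technicality is that $q_\epsilon(R_{\epsilon,1},\cdot)$ and $q(R_1,\cdot)$ are measures carrying a Dirac component at $\Lambda$, so $\|\cdot\|_1$ is to be read as total variation on the signed measure difference; the three-part decomposition above respects this by isolating the Dirac contribution whose convergence is just scalar convergence of its mass.
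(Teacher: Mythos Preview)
Your three-part Duhamel decomposition and the handling of the Dirac piece match the paper exactly. The two substantive differences are tooling choices. For the survivor term, you use contractivity plus strong $L_1$-continuity of the killed semigroup at $s=0^+$; the paper instead back-propagates both $q_\epsilon(U_{\epsilon,1},\cdot)$ and $P_{s_\epsilon}q_\epsilon(U_{\epsilon,1},\cdot)$ to the trigger time $S_{\epsilon,1}$, so that the kernel derivative $\partial_\sigma\kappa$ only appears for $\sigma\geq\lambda/2$ and Lemma~\ref{lem:supbound} applies directly---your route is cleaner but requires the extra image-method argument you sketch. For the continuous-reset term, you bound by the mass $G_\epsilon(S_{\epsilon,1})-G_\epsilon(\xi_\epsilon(U_{\epsilon,1}))=F_\epsilon(T_{\epsilon,1}^-)-F_\epsilon(T_{\epsilon,1}-\epsilon)$ and invoke the $1/\sqrt{T_{\epsilon,1}-t}$ asymptotic of Proposition~\ref{def:blowup_int}; the paper instead stays in the $\sigma$-picture, Taylor-expanding $\Psi_\epsilon$ at $S_{\epsilon,1}$ and $R_{\epsilon,1}$ and using the uniform bound $\partial_\sigma g_\epsilon(S_{\epsilon,1})\geq b_\Lambda/(2\lambda)$ from \eqref{eq:dgS1} to get $\int_{U_{\epsilon,1}}^{R_{\epsilon,1}}|\xi'_\epsilon|\,\dd\sigma=O(\sqrt{\epsilon})$ directly. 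These are two sides of the same coin (the square-root blowup of $f_\epsilon$ \emph{is} the quadratic flatness of $\Psi_\epsilon$ at $S_{\epsilon,1}$), but Proposition~\ref{def:blowup_int} is stated as an asymptotic for a single fixed dynamics, without an $\epsilon$-uniform implied constant; strictly speaking you still owe that uniformity, which follows from precisely the same Taylor bounds the paper uses. The gap is cosmetic rather than substantive.
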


For the sake of completeness, we provide the proof of Proposition \ref{thm:epsilonCont} using Propositions  \ref{prop:convEps1}, \ref{prop:convEps2},  \ref{prop:BlowupCont1}, and  \ref{prop:BlowupCont2}, whose proofs are given in Section Section \ref{sec:smooth}, Section \ref{sec:trigger}, Section \ref{sec:exit}, and \ref{sec:reset}, respectively.

 \begin{proof}[Proof of Theorem \ref{thm:epsilonCont}]
Let us consider the fixed-point solution $\Psi_\epsilon$ for a refractory period $\epsilon> 0$ with purely spatial  natural initial condition $q_0$.
This amounts to considering that all processes are active at an initial reset time $R_{\epsilon,0}= 0$ under Assumption \ref{mainAssump}.
By Theorem \ref{th:globSol}, $\Psi_\epsilon$ presents a countable infinity of jumps with well ordered trigger times $\{S_{\epsilon,k} \}_{k \in \mathbbm{N}}$, exit times $\{U_{\epsilon,k} \}_{k \in \mathbbm{N}}$, and reset times $\{R_{\epsilon,k} \}_{k \in \mathbbm{N}}$ such that $0=R_{\epsilon,0} <S_{\epsilon,1}<U_{\epsilon,1}< R_{\epsilon,1} <S_{\epsilon,2}<U_{\epsilon,2}< R_{\epsilon,2} <\dots$.
Moreover, we have the following facts: 
\begin{enumerate}
\item The post-reset durations to the next trigger time $U_{\epsilon,k}-R_{\epsilon,k}$ are uniformly bounded away from zero by the $\epsilon$-independent constant $s_1$ defined in \eqref{def:s1s2}.
\item
The blowup durations $S_{\epsilon,k}-U_{\epsilon,k}$ satisfy $S_{\epsilon,k}-U_{\epsilon,k} = \lambda \pi_{\epsilon,k}$, where the blowup sizes $\{\pi_{\epsilon,k} \}_{k \in \mathbbm{N}}$ are uniformly bounded away from zero by the $\epsilon$-independent constant $1-C_\Lambda/\lambda>1/2$ where $C_\Lambda$ is chosen according to \eqref{eq:choiceC}.
\item
The post-exit delays to reset times are uniformly bounded by $\nu \epsilon \leq R_{\epsilon,k}-U_{\epsilon,k} \leq 2 \nu \epsilon$.
\end{enumerate}

Consider then the well ordered blow up times $0=R_{0} <S_{1}<U_{1}< R_{1} <S_{2}<U_{2}< R_{2} <\dots$ for the corresponding PMF solution $\Psi$ obtained for the same purely spatial initial condition $q_0$ but with $\epsilon=0$. 
Assuming that $\lim_{\epsilon \to 0^+} R_{\epsilon,k} = R_k$ and that $\lim_{\epsilon \to 0^+}  \Vert q_\epsilon(R_{\epsilon,k}, \cdot) - q_\epsilon(R_{k}, \cdot) \Vert_1=0$, let us show that $\lim_{\epsilon \to 0^+} R_{\epsilon,k+1} = R_{k+1}$ and that $\lim_{\epsilon \to 0^+}  \Vert q_\epsilon(R_{\epsilon,k+1}, \cdot) - q_\epsilon(R_{k+1}, \cdot) \Vert_1=0$, which is the key to propagate our recurrence argument.

$(i)$ \emph{Inter-blowup evolution}: For all $\epsilon>0$,  the densities 
$\big(q_\epsilon(R_{\epsilon,k}, \cdot) , \{ g_\epsilon(\sigma)\}_{\xi_\epsilon(R_{\epsilon,k}) \leq \sigma < R_{\epsilon,k}}\big)$
defines natural initial conditions at blowup reset time $R_{\epsilon,k}$.
Assuming that $q_\epsilon(R_{\epsilon,k}, \cdot)$ converges toward $q_\epsilon(R_{\epsilon,k}, \cdot)$ in $L_1$ norm allows one to invoke Proposition \ref{prop:convEps1}:
 the next blowup trigger time $S_{\epsilon,k+1}$ is such that $\lim_{\epsilon \to 0^+}S_{\epsilon,k+1}-R_{\epsilon,k} = S_{k+1}-R_{k}$ and $\Psi_\epsilon(\cdot + R_{\epsilon,k})$ uniformly converges toward $\Psi(\cdot + R_{k})$ on an interval containing $[0,S_{k+1}-R_{k}+\lambda/3]$.
Moreover, by Proposition \ref{prop:convEps2}, the spatial density at trigger time $q_\epsilon(S_{\epsilon,k+1}, \cdot)$ converges toward $q(S_{k+1}, \cdot)$ in $L_1$ norm.

$(ii)$ \emph{Blowup episode}:  By Proposition \ref{prop:BlowupCont1}, $\lim_{\epsilon \to 0^+} \Vert q_\epsilon(S_{\epsilon,k+1}, \cdot) -q_\epsilon(S_{k+1}, \cdot)\Vert_1 =0$ entails that the next blowup size satisfies $\lim_{\epsilon \to 0^+} \pi_{\epsilon,k+1} = \pi_{k+1}$.
Consequently, since the inverse time changes $\Psi_\epsilon$ are uniformly Lipschitz with respect to $\epsilon>0$, $\Psi_\epsilon(\cdot + R_{\epsilon,k})$ converges uniformly toward $\Psi(\cdot + R_{k})$ on $[0,S_{k+1}-R_{k}+\lambda \pi_{k+1}]$.
Moreover, by Proposition \ref{prop:BlowupCont1}, the spatial density at exit time $q_\epsilon(U_{\epsilon,k+1}, \cdot)$ converges toward $q(U_{k+1}^-, \cdot)$ in $L_1$ norm.
Finally, by Proposition \ref{prop:BlowupCont2}, the spatial density at reset time $q_\epsilon(R_{\epsilon,k+1}, \cdot)$ converges toward $q(R_{k+1}, \cdot)$ in $L_1$ norm, with  $q(R_{k+1}, \cdot)$ still satisfying Assumption \ref{mainAssump}.

As we consider identical purely spatial initial conditions at $R_{\epsilon,0}=R_0=0$: $q_{\epsilon,0}(0,\cdot)=q_{0}(0,\cdot)$, the recurrence naturally initializes so that for all $k$ in $\mathbbm{N}$, the inverse time $\Psi_\epsilon$ uniformly converges toward $\Psi$ on the interval $[0,R_k]$ and the dPMF blowup times converge toward their PMF counterparts: 
$\lim_{\epsilon \to 0^+} (S_{\epsilon,k},U_{\epsilon,k},R_{\epsilon,k}) = (S_k,U_k,R_k)$.
We conclude by observing that $\lim_{k \to \infty} R_{\epsilon,k} \geq \lim_{k \to \infty}  k (\lambda/2+s_1)=\infty$, which establishes the compact convergence of $\Psi_\epsilon$ toward $\Psi$ on $\mathbbm{R}^+$.
\end{proof}

\subsection{Continuity in between blowups}\label{sec:smooth}

\begin{proof}[Proof of Proposition \ref{prop:convEps1}]
The proof proceeds in five steps: 
$(i)$ we show that the backward-delay function $\eta_{\epsilon}$ converges compactly toward zero when $\epsilon \to 0^+$ on  some interval $[0,\underline{S}_1]$;
$(ii)$ we show that the cumulative flux $G_{\epsilon}$ converges compactly  toward the cumulative flux $G$ obtained for $\epsilon=0$ on $[0,\underline{S}_1]$;
$(iii)$ we show that the convergence result of $(ii)$ extends to the iterated derivative of $G_{\epsilon}$;
$(iv)$ using the result from $(iii)$, we show that the interval bound $\underline{S}_1$ defined in $(i)$ coincides with $S_1$, the next blowup trigger time for $\epsilon=0$;
$(v)$ finally, we extend the convergence results to the interval $[0,\underline{S}_1+\lambda/2]$, which partially covers the next blowup episode.

$(i)$ Under Assumption \ref{mainAssump}, for all $\epsilon>0$, the natural initial conditions $(q_{\epsilon,0},g_{\epsilon,0})$ in $\mathcal{M}((0,\infty)) \times \mathcal{M}([-\xi_{\epsilon,0},0))$ are necessarily such that $g_{\epsilon,0}  \leq 1/ (4\lambda) < 1/\lambda$ with $g_\epsilon(0) = \lim_{\sigma \to 0} g_{\epsilon,0}(\sigma) \leq 1/ (4\lambda) < 1/\lambda$ as well.
Therefore, one can consider for sufficiently small $\epsilon>0$, the positive times $\tilde{S}_{\epsilon,1}$ and $S_{\epsilon,1}$
 \begin{eqnarray}
\tilde{S}_{\epsilon,1} = \inf \left\{ \sigma> 0\, \bigg \vert \, g_\epsilon(\sigma) > \frac{1- \sqrt{\nu \epsilon}}{\lambda} \right\} < \inf \left\{ \sigma> 0\, \bigg \vert \, g_\epsilon(\sigma) > \frac{1}{\lambda} \right\}= S_{\epsilon,1}   \, , \nonumber
\end{eqnarray}
which are uniformly bounded above.
The proof of Proposition \ref{prop:nextBlowup} shows that the next blowup trigger time $S_{\epsilon,1}$ coincides with the first time $g_\epsilon$ attains $1/\lambda$.
Moreover, this first time corresponds to a crossing with a slope that is bounded away from zero. 
Specifically, we have $\partial_\sigma g_\epsilon(S_{\epsilon,1})  \geq b_\Lambda /\lambda>0$.
This implies that $\lim_{\epsilon \to 0} \vert S_{\epsilon,1}-\tilde{S}_{\epsilon,1} \vert = 0 $.
Then, by definition of $\tilde{S}_{\epsilon,1}$, we  have $ \Psi'_\epsilon =( 1-\lambda g_\epsilon ) /\nu\geq \sqrt{\epsilon/\nu}$ on $[-\xi_{\epsilon,0},\tilde{S}_{\epsilon,1})$, so that on  $[0,\tilde{S}_{\epsilon,1})$, the corresponding backward-delay function satisfies:
\begin{eqnarray}
\eta_\epsilon(\sigma) 
= \sigma - \Phi_\epsilon(\Psi_\epsilon(\sigma)-\epsilon)
= \Phi_\epsilon(\Psi_\epsilon(\sigma)) - \Phi_\epsilon(\Psi_\epsilon(\sigma)-\epsilon) 
&\leq& \epsilon / \Vert  \Psi'\Vert_{-\xi_{\epsilon,0},\tilde{S}_{\epsilon,1}} \leq \sqrt{\nu \epsilon} \, .\nonumber
\end{eqnarray}
Thus, the backward-delay function $\eta_\epsilon$ and the backward function $\xi_\epsilon$ compactly converge toward zero and $\xi=\mathrm{id}$, respectively, on $[0,\min(S_1,\underline{S}_1))$ with $\underline{S}_1=\liminf_{\epsilon \to 0^+}  \tilde{S}_{\epsilon,1}=\liminf_{\epsilon \to 0^+}  S_{\epsilon,1}$.

$(ii)$ By integration by part of the renewal-type equation \eqref{eq:gRenew}, the cumulative fluxes $G_\epsilon$ and $G$ respectively satisfy:
\begin{eqnarray}\label{eq:gConv}
G_\epsilon(\sigma)
&=&
\int_0^\infty H(\sigma, x)  q_{\epsilon,0}(x) \, \dd x  - H(\sigma, \Lambda) G_\epsilon(\xi_\epsilon(0)) + \\
&& \hspace{40pt} \int_0^\sigma h(\sigma-\tau, \Lambda) G_\epsilon(\xi_\epsilon(\tau))  \, \dd \tau \, , \nonumber\\
G(\sigma) 
&=&
\int_0^\infty H(\sigma, x)  q_{0}(x) \, \dd x  - H(\sigma, \Lambda) G(0) + \\
&& \hspace{40pt} \int_0^\sigma  h(\sigma-\tau, \Lambda) G(\xi(\tau))  \, \dd \tau \, . \nonumber
\end{eqnarray}
For all $0 \leq \sigma \leq \min(S_1,\underline{S}_1)$, we have $\xi(\sigma)=\sigma$.
Using the convention $G(0)=0$, this observation allows one to write
\begin{eqnarray}
\vert G_\epsilon(\sigma) -G(\sigma) \vert
&\leq& 
 \int_0^\infty H(\sigma, x) \big \vert q_{\epsilon,0}(x)-q_{0}(x) \big \vert \, \dd x +  \nonumber\\
&& \hspace{20pt} H(\sigma, \Lambda) \vert G_\epsilon(\xi_\epsilon(0)) \vert + \int_0^\sigma h(\sigma-\tau, \Lambda)  \big \vert  G_\epsilon(\xi_\epsilon(\tau)) -  G(\tau) \big \vert \, \dd \tau \, , \nonumber\\
&\leq& 
\Vert H(\cdot, x) \Vert_\infty \left(\int_0^\infty  \big \vert q_{\epsilon,0}(x)-q_{0}(x) \big \vert \, \dd x + \vert G_\epsilon(\xi_\epsilon(0)) \vert \right)
 + \nonumber\\ 
 \label{eq:ineqGconv}
&& \hspace{20pt} \int_0^\sigma h(\sigma-\tau, \Lambda)  \left( \big \vert  G_\epsilon(\xi_\epsilon(\tau)) -  G_\epsilon(\tau) \big \vert + \big \vert  G_\epsilon(\tau) -  G(\tau) \big \vert  \right)\, \dd \tau \, . \nonumber
\end{eqnarray}
By uniform boundedness of the instantaneous flux $ \Vert g_\epsilon \Vert_\infty \leq A_\Lambda$,  we have 
\begin{eqnarray}
 \vert G_\epsilon(\xi_\epsilon(0)) \vert \leq A_\Lambda \epsilon
 \quad \mathrm{and} \quad
 \big \vert  G_\epsilon(\xi_\epsilon(\tau)) -  G_\epsilon(\tau) \big \vert  \leq  A_\Lambda    \vert  \xi_\epsilon(\tau) -  \tau \big \vert  =A_\Lambda    \vert  \eta_\epsilon(\tau)  \big \vert \, . \nonumber
 \end{eqnarray}
Then, using that  $\Vert H(\cdot, x) \Vert_\infty=1$ and the bound
\begin{eqnarray}
\int_0^\sigma h(\sigma-\tau, \Lambda)  \big \vert  G_\epsilon(\tau) -  G(\tau) \big \vert \, \dd \tau \leq H(\sigma,\Lambda) \Vert G_\epsilon - G \Vert_{0,\sigma} \nonumber \, ,
\end{eqnarray}
we deduce from inequality \eqref{eq:ineqGconv} and the bounds given above that
\begin{eqnarray}
\big(1-H(\sigma,\Lambda) \big)\Vert G_\epsilon - G \Vert_{0,\sigma} 
\leq 
\Vert q_{\epsilon,0}-q_{0} \Vert_1  +  A_\Lambda \left( \epsilon  + H(\sigma,\Lambda) \Vert \eta_\epsilon \Vert_{0,\sigma} \right) \, . \nonumber
\end{eqnarray}
As for all $\sigma \leq \min(S_1,\underline{S}_1)$, we have $H(\sigma, \Lambda) \leq H(S_1, \Lambda)<1$, the above inequality implies the compact convergence of $G_\epsilon$ towards $G$ on  $[0,\min(S_1,\underline{S}_1))$ when $\epsilon \to 0^+$.
This can be extended to the uniform convergence over the closed interval $[0,\min(S_1,\underline{S}_1)]$ by uniform boundedness of $g_\epsilon$.

$(iii)$ Similar calculations but starting from the renewal-type equation \eqref{eq:gRenew} shows that the instantaneous flux $g_\epsilon$ converges uniformly toward $g$, the instantaneous flux with zero refractory period on $[0,\min(S_1,\underline{S}_1))$.
Indeed, by integration by part of  \eqref{eq:gRenew}, we have:
\begin{eqnarray}
g_\epsilon(\sigma) 
&=&
\int_0^\infty h(\sigma, x)  q_{\epsilon,0}(x) \, \dd x  - h(\sigma, \Lambda) G_\epsilon(\xi_\epsilon(0)) + \nonumber\\
&& \hspace{80pt} \int_0^\sigma \partial_\sigma h(\sigma-\tau, \Lambda) G_\epsilon(\xi_\epsilon(\tau))  \, \dd \tau \, , \nonumber\\
g(\sigma) 
&=&
\int_0^\infty h(\sigma, x)  q_{0}(x) \, \dd x  - h(\sigma, \Lambda) G(\xi(0)) + \nonumber\\
&& \hspace{80pt} \int_0^\sigma \partial_\sigma h(\sigma-\tau, \Lambda) G(\tau)  \, \dd \tau \, ,\nonumber
\end{eqnarray}
where we use that $\xi(\sigma)=\sigma$ for $\sigma<\min(S_1,\underline{S}_1)$.
The integrability and uniform boundedness of the integration kernels appearing above directly imply the announced uniform convergence.
Similar arguments can be made about the equations obtained by further differentiation of \eqref{eq:gRenew} with respect to $\sigma$, showing the uniform convergence of all derivatives of $g_\epsilon$ toward the corresponding derivatives of $g$ on $[0,\min(S_1,\underline{S}_1)]$ when $\epsilon \to 0^+$.

$(iv)$ The observations above allow one to show $(1)$ in Proposition \ref{prop:convEps1}, or equivalently that
\begin{eqnarray}
\lim_{\epsilon \to 0^+} S_{\epsilon,1}= S_1 = \inf \left\{ \sigma> 0\, \Big \vert \, g(\sigma) > 1/\lambda \right\} \, . \nonumber
\end{eqnarray}
This amounts to show $\underline{S}_1=\overline{S}_1=S_1$, where $\underline{S}_1= \liminf_{\epsilon \to 0^+} S_{\epsilon,1}$ and $\overline{S}_1= \limsup_{\epsilon \to 0^+} S_{\epsilon,1}$.
Let us first show that $S_1 \leq \underline{S}_1$.
By definition, there is a sequence $\{ \epsilon_n \}_{n \in \mathbbm{N}}$ with $\epsilon_n \to 0^+$ and such that:
\begin{eqnarray}
\lim_{n \to \infty} S_{\epsilon_n} =  \underline{S}_1 \, , \quad g_{\epsilon_n}(S_{\epsilon_n,1}) = 1/\lambda \, , \quad \mathrm{and} \quad  \partial_\sigma  g_{\epsilon_n} \, .\nonumber(S_{\epsilon_n,1})\geq b_\Lambda /\lambda>0 \, .
\end{eqnarray}
Thus, by the uniform convergence results of $(iii)$, we have $g(\underline{S}_1) =1/\lambda$ and  $\partial_\sigma g(\underline{S}_1)  \geq b_\Lambda /\lambda>0$, which implies that $\underline{S}_1$ is a crossing time of $g$ at level $1/\lambda$.
As $S_1$ is defined as the first such crossing time, we have $\underline{S}_1 \geq S_1$.
To show that $\underline{S}_1 = S_1$, let us consider $\overline{S}_1 \geq \underline{S}_1 \geq S_1$ and let us show that $\overline{S}_1=S_1$.
Suppose $\overline{S}_1>S_1$.
Then, there is $S_1<\sigma <\overline{S}_1$ such that $g(\sigma)>1/\lambda$.
Posit $\delta =g(\sigma)-1/\lambda$. 
By pointwise convergence  in $\sigma$, there is $e>0$ such that for all $0<\epsilon<e$, $\vert g_{\epsilon}(\sigma)-g(\sigma) \vert<\delta/2$.
Consider now a sequence $\{ \epsilon_n \}_{n \in \mathbbm{N}}$ with $\epsilon_n \to 0^+$ and such that $\lim_{n \to \infty} S_{\epsilon_n,1} = \overline{S}$.
For $n$ large enough, we have $\epsilon_n<e$ so that
\begin{eqnarray}
g_{\epsilon_n}(\sigma) > g(\sigma) - \delta/2 = (g(\sigma)+1/\lambda)/2 > 1/\lambda \, ,  \nonumber
\end{eqnarray}
so that the first crossing $S_{\epsilon_n,1}$ must happen before $\sigma$.
But then $S_{\epsilon_n,1}<\sigma<\overline{S}_1$ contradicts the definition of $\overline{S}_1$ as the limit of $S_{\epsilon_n,1}$.
Thus we must have $\underline{S}_1 =  \overline{S}_1 = \lim_{\epsilon \to 0^+} S_{\epsilon,1} = S_1$.
This shows $(1)$ in Proposition \ref{prop:convEps1}.

$(v)$
Finally, observe that  $(2)$ in Proposition \ref{prop:convEps1} will directly follows from extending the uniform convergence results shown in $(ii)$ and $(iii)$ to $[0,S_1+\lambda/2]$.
To prove such an extension, observe that by Proposition \ref{prop:persistence}, the full blowup triggered in $S_{\epsilon,1}$ with refractory period $\epsilon>0$ and in $S_{1}$ for zero refractory period all have size $\pi_{\epsilon,1}>1/2$.
Thus, the backward functions $\xi_\epsilon$ and $\xi$ must be constant on $[S_{\epsilon,1}, S_{\epsilon,1}+\lambda/2]$ and $[S_1, S_1+\lambda/2]$, so that we have:
\begin{eqnarray}
S_1 \leq \sigma \leq S_{\epsilon,1}+\lambda/2: \quad G_\epsilon(\sigma)
&=&
\int_0^\infty H(\sigma, x)  q_{\epsilon,0}(x) \, \dd x  - H(\sigma, \Lambda) G_\epsilon(\xi_\epsilon(0)) + \nonumber\\
&& \hspace{40pt} \int_0^{\min(\sigma,S_{1,\epsilon})}h(\sigma-\tau, \Lambda) G_\epsilon(\xi_\epsilon(\tau))  \, \dd \tau \, , \nonumber\\
S_1 \leq \sigma \leq S_1+\lambda/2: \quad G(\sigma) 
&=&
\int_0^\infty H(\sigma, x)  q_{0}(x) \, \dd x  - H(\sigma, \Lambda) G(0) + \nonumber\\
&& \hspace{40pt} \int_0^{S_{1}}  h(\sigma-\tau, \Lambda) G(\tau)  \, \dd \tau \, , \nonumber
\end{eqnarray}
where we use  that $\xi=\mathrm{id}$ for all $0 \leq \sigma \leq S_1$.
From there, given $(ii)$, the only additional point to check, is that for all $0 \leq \sigma \leq S_1+\lambda/2$, the extra term
\begin{eqnarray}
I(\sigma)= \int_{S_1}^{\min(\sigma,S_{1,\epsilon})} h(\sigma-\tau, \Lambda) G_\epsilon(\xi_\epsilon(\tau))  \, \dd \tau \nonumber
\end{eqnarray}
vanishes uniformly over $[S_1, S_1+\lambda/3]$.
This directly follows from the uniform boundedness of $G_\epsilon$:
\begin{eqnarray}
I(\sigma) &\leq& \Vert h( \cdot, \Lambda )\Vert_\infty  \Vert G_\epsilon \circ \xi_\epsilon\Vert_{S_1,S_{1,\epsilon}} \vert S_{\epsilon,1} - S_1\vert \, \nonumber\\
&\leq& \Vert h( \cdot, \Lambda )\Vert_\infty  A_\Lambda(S_1+\lambda/3) \vert S_{\epsilon,1} - S_1\vert \xrightarrow[\epsilon \to 0^+]{} 0 \, . \nonumber
\end{eqnarray}
\end{proof}

\subsection{Continuity at blowup trigger time}\label{sec:trigger}

\begin{proof}[Proof of Proposition \ref{prop:convEps2}]
By Lemma \ref{lem:xiBound}, there is a constant $s_1$ independent of $\epsilon$ such that $0 \leq \xi_\epsilon'(\sigma) \leq 1$ for all $S_{\epsilon,1} -s_1/2\leq \sigma \leq S_{\epsilon,1}+ \lambda/2$. 
Introducing the real number $U=S_1-s_1/3$ and using that 
\begin{eqnarray}
q(S_{\epsilon,1}, y) &=& \int_0^\infty \kappa(S_{\epsilon,1},y,x) p_{\epsilon,0}(x) \, \dd x + \int_0^{S_{\epsilon,1}}  \kappa(S_{\epsilon,1}-\tau, y, \Lambda) \, \dd G_\epsilon(\xi_\epsilon(\tau)) \, , \nonumber\\
q(S_1,y) &=& \int_0^\infty \kappa(S_1,y,x) p_0(x) \, \dd x + \int_0^{S_1}  \kappa(S_1-\tau, y, \Lambda) \, \dd G(\tau) \, , \nonumber
\end{eqnarray}
we upper bound $\Delta q = \Vert q(S_{\epsilon,1}, \cdot)-q(S_1, \cdot)  \Vert_1$ as
\begin{eqnarray}
\Delta q  \leq \Delta q_1 + \Delta q_2+ \Delta q_3 + \Delta q_4 + \Delta q_5 \, , \nonumber
\end{eqnarray}
where the terms $\Delta q_1$, $\Delta q_2$, $\Delta q_3$, $\Delta q_4$, and $\Delta q_5$ are defined as follows:
\begin{eqnarray}
\Delta q_1 = \int_0^\infty \bigg \vert \int_0^\infty \kappa(S_{1,\epsilon},y,x) p_{\epsilon,0}(x) \, \dd x - \int_0^\infty \kappa(S_1,y,x) p_0(x) \, \dd x \bigg \vert \, \dd y  \, ,\nonumber
\end{eqnarray}
\begin{eqnarray}
\Delta q_2 
= 
\int_0^\infty \bigg \vert \int_0^U  \kappa(S_1-\tau, y, \Lambda) \, \dd G_\epsilon(\xi_\epsilon(\tau)) - \int_0^U \kappa(S_1-\tau, y, \Lambda) \, \dd G(\tau) \bigg \vert \, \dd y  \, ,\nonumber
\end{eqnarray}
\begin{eqnarray}
\Delta q_3 
= 
\int_0^\infty \bigg \vert \int_0^U  \kappa(S_{\epsilon,1}-\tau, y, \Lambda) \, \dd G_\epsilon(\xi_\epsilon(\tau)) - \int_0^U \kappa(S_1-\tau, y, \Lambda) \, \dd G_\epsilon(\xi_\epsilon(\tau)) \bigg \vert \, \dd y  \, ,\nonumber
\end{eqnarray}
\begin{eqnarray}
\Delta q_4
= 
\int_0^\infty \bigg \vert \int_U ^{S_1}  \kappa(S_{\epsilon,1}-\tau, y, \Lambda) \, \dd G_\epsilon(\xi_\epsilon(\tau)) - \int_U^{S_1}  \kappa(S_1-\tau, y, \Lambda) \, \dd G(\tau) \bigg \vert \, \dd y  \, , \nonumber
\end{eqnarray}
\begin{eqnarray}
\Delta q_5
= 
\int_0^\infty \bigg \vert \int_{S_1} ^{S_{\epsilon,1}}  \kappa(S_{\epsilon,1}-\tau, y, \Lambda) \, \dd G_\epsilon(\xi_\epsilon(\tau))  \bigg \vert \, \dd y  \, . \nonumber
\end{eqnarray}
Our goal is to show that the terms $\Delta q_2$, $\Delta q_3$, $\Delta q_4$, and $\Delta q_5$ all vanish in the limit $\epsilon \to 0^+$.
The nonrenewal term $\Delta q_1$  can be bounded leveraging the fact $\lim_{\epsilon \to 0^+} S_{\epsilon,1}=S_1>0$, which allows one to use the bound provided by Lemma \ref{lem:supbound}: 
\begin{eqnarray}
\Delta q_1 
&& 
\leq
\int_0^\infty  \bigg \vert \int_0^\infty \kappa(S_{1,\epsilon},y,x) \big(p_{\epsilon,0}(x) - p_0(x) \big) \, \dd x  \bigg \vert  \, \dd y \; + \nonumber\\
&& \hspace{20pt}
\int_0^\infty  \bigg \vert \int_0^\infty \big(\kappa(S_{1,\epsilon},y,x) - \kappa(S_1,y,x) \big) p_0(x)  \, \dd x  \bigg \vert \, \dd y \, , \nonumber\\
&& 
\leq
 \int_0^\infty  \left( \int_0^\infty \kappa(S_{1,\epsilon},y,x)\, \dd y \right)  \big \vert p_{\epsilon,0}(x) - p_0(x) \big \vert   \, \dd x \; + \nonumber\\
&& \hspace{20pt} 
\int_0^\infty \left( \int_0^\infty  \sup_{S_1/2 \leq \sigma \leq S_1+\lambda/3}   \vert  \partial_\sigma \kappa(\sigma,y,x) \vert \, \dd y \right) \vert S_{1,\epsilon} - S_1 \vert p_0(x)  \, \dd x  
 \, , \nonumber\\
 &&
 \leq
 \Vert p_{\epsilon,0} - p_0 \Vert_1
+
M_{S_1/2, S_1+\lambda/3} \vert S_{1,\epsilon} -S_1 \vert   \nonumber
 \, .
\end{eqnarray}
Thus $\lim_{\epsilon \to 0^+} \Delta q_1 =0$ directly follows from the $L_1$ convergence  $\Vert p_{\epsilon,0} - p_0 \Vert_1 \to 0$ as $\epsilon \to 0^+$.
The bound provided by Lemma \ref{lem:supbound} can be similarly used for the renewal term $\Delta q_2$ which has been defined up to time $U<S_1$.
To see this, let us use integration by parts on $[0,U]$ to write
\begin{eqnarray}
\Delta q_2 
&&
\leq
\int_0^\infty \Bigg \vert  \int_0^U \partial_ \sigma \kappa(S_1-\tau, y, \Lambda) \big( G_\epsilon(\xi_\epsilon(\tau)) - G(\tau) \Big) \, \dd \tau \bigg \vert \, \dd y + \nonumber\\
&& 
\hspace{20pt} \int_0^\infty \bigg \vert \Big[ \kappa(S_1-\tau, y, \Lambda) \big(G_\epsilon(\xi_\epsilon(\tau))-G(\tau)  \big)  \Big]_0^U \bigg \vert \, \dd y \, , \nonumber\\
&&
\leq
\Vert G_\epsilon \circ \xi_\epsilon - G  \Vert_{0, U} \int_0^U  \left( \int_0^\infty  \big \vert  \partial_ \sigma \kappa(S_1-\tau, y, \Lambda)  \big \vert \, \dd y \right)\, \dd \tau  + \nonumber\\
&& 
\hspace{20pt}  \Big \vert G_\epsilon(\xi_\epsilon(U))-G(U)    \Big \vert + \Big \vert G_\epsilon(\xi_\epsilon(0))-G(0)    \Big \vert  \nonumber \, .
\end{eqnarray}
Using the bound from Lemma \ref{lem:supbound}, we get
\begin{eqnarray}
\Delta q_2 
\leq
(2+M_{S_1-U,S_1}) \, \Vert G_\epsilon \circ \xi_\epsilon - G  \Vert_{0, U}  \, ,
\nonumber
\end{eqnarray}
and $\lim_{\epsilon \to 0^+} \Delta q_2 =0$ follows from the uniform convergence of $G_\epsilon \circ \xi_\epsilon$ toward $G$ on $[0,S_1+\lambda/3]$ given in Proposition \ref{prop:convEps1}.
The boundary renewal term $\Delta q_3$ can also be bounded using the fact that $S_{\epsilon,1}>(S_1+U)/2>U$ for small enough $\epsilon>0$:
\begin{eqnarray}
\Delta q_3 
&& 
\leq
\int_0^U \left( \int_0^\infty \sup_{(S_1+U)/2 \leq \sigma \leq S_1+\lambda/3} \big \vert \partial_\sigma \kappa(\sigma-\tau, y, \Lambda)\big \vert \, \dd y \right) \vert S_{\epsilon,1} - S_1\vert \, \dd G_\epsilon(\xi_\epsilon(\tau))   \nonumber\\
&& 
\leq
M_{(S_1-U)/2,S_1+\lambda.3} \big( G_\epsilon(\xi_\epsilon(U))-G_\epsilon(\xi_\epsilon(0)) \big) \vert S_{\epsilon,1} - S_1\vert   \nonumber \, .
\end{eqnarray}
We conclude that $\lim_{\epsilon \to 0^+} \Delta q_3 =0$ by boundedness of the cumulative fluxes:
\begin{eqnarray}
\vert G_\epsilon(\xi_\epsilon(U))-G_\epsilon(\xi_\epsilon(0)) \vert
\leq
\vert G_\epsilon(\xi_\epsilon(U)) \vert + \vert G_\epsilon(\xi_\epsilon(0)) \vert 
\leq 
 G_\epsilon(U) + 1
\leq A_\Lambda U + 1 \, .\nonumber 
\end{eqnarray}
The remaining terms $\Delta q_4 $ and $\Delta q_5$ collect the contribution of those processes that reset at the vicinity of the blowup trigger times $S_1$ and $S_{\epsilon,1}$.
Bounding these terms will crucially rely on the fact that under Assumption \ref{mainAssump}, the backward function $\xi_\epsilon$ can be shown to be well behaved on $[U,S_1]$ in the sense that by Lemma \ref{lem:xiBound}, we have $0 \leq \xi_\epsilon' \leq 1$.
Bearing in mind this preliminary remark, observe that for all small enough $\delta>0$, we have
\begin{eqnarray}
\Delta q_4 
&& 
\leq
\int_U^{S_1}  \left( \int_0^\infty  \kappa(S_{\epsilon,1}-\tau, y, \Lambda) \, \dd y  \right) \, \big \vert \dd G_\epsilon(\xi_\epsilon(\tau)) - \dd G(\tau) \big \vert + \nonumber \\
&& \hspace{20pt} 
\int_0^\infty \int_U^{S_1} \big \vert \kappa(S_{\epsilon,1}-\tau, y, \Lambda)- \kappa(S_1-\tau, y, \Lambda)\big \vert \, \dd G(\tau)  \, \dd y \, , \nonumber \\
&& 
\leq
\int_U^{S_1}  \big \vert \dd G_\epsilon(\xi_\epsilon(\tau)) - \dd G(\tau) \big \vert  + \nonumber \\
&& \hspace{20pt} 
\int_0^\infty \int_{S_1-\delta}^{S_1} \big \vert \kappa(S_{\epsilon,1}-\tau, y, \Lambda)- \kappa(S_1-\tau, y, \Lambda)\big \vert \, \dd G(\tau)  \, \dd y \; + \nonumber \\
&& \hspace{20pt} \int_0^\infty \int_{U}^{S_1-\delta} \big \vert \kappa(S_{\epsilon,1}-\tau, y, \Lambda)- \kappa(S_1-\tau, y, \Lambda)\big \vert \, \dd G(\tau)  \, \dd y \, , \nonumber \\
&&
\leq
\int_U^{S_1}  \big \vert \xi_\epsilon'(\tau) g_\epsilon(\xi_\epsilon(\tau)) -   g(\tau) \big \vert  \, \dd \tau + \nonumber \\
&& \hspace{20pt} 
\Vert g \Vert_\infty \delta  \left( \int_0^\infty  \big \vert \kappa(S_{\epsilon,1}-\tau, y, \Lambda) \big\vert + \big \vert \kappa(S_1-\tau, y, \Lambda)\big \vert  \, \dd y \right) \; + \nonumber \\
&& \hspace{20pt}  M_{\delta/2,S_1+\lambda/3} \big( G(S_1-\delta)-G(U) \big) \vert S_{\epsilon,1} - S_1\vert  , \nonumber \\
&& 
\leq
\int_U^{S_1}  \big \vert \xi_\epsilon'(\tau) g_\epsilon(\xi_\epsilon(\tau)) -   g(\tau) \big \vert  \, \dd \tau + 2 A_\Lambda  \delta  + M_{\delta/2,S_1+\lambda/3} A_\Lambda S_1 \vert S_{\epsilon,1} - S_1\vert  \nonumber \, .
\end{eqnarray}
By Lemma \ref{lem:xiBound}, $\xi'_\epsilon(\tau) g_\epsilon(\xi_\epsilon(\tau)) \leq A_\Lambda$ over $[U,S_1]$, whereas for all $\sigma$ in $[U,S_1)$, we have the pointwise convergence
\begin{eqnarray}
\xi_\epsilon'(\sigma)
=
\frac{\Psi_\epsilon'(\sigma)}{\Psi_\epsilon'(\xi_\epsilon(\sigma))}
=
\frac{1-\lambda g_\epsilon(\sigma)}{1- \lambda g_\epsilon(\xi_\epsilon(\sigma))} \xrightarrow[\epsilon \to 0^+]{} 1 \, .
\nonumber
\end{eqnarray}
Thus by dominated convergence, we have
\begin{eqnarray}
\lim_{\epsilon \to 0^+} \int_U^{S_1}  \big \vert \xi'(\tau) g_\epsilon(\xi_\epsilon(\tau)) -   g(\tau) \big \vert \, \dd \tau= 0 \, .
\nonumber
\end{eqnarray}
Then, as $\lim_{\epsilon \to 0^+} S_{\epsilon,1}=S_1$, we deduce that $\limsup_{\epsilon \to 0^+} \Delta q_4 \leq 2 A_\Lambda \delta$ for all small enough $\delta>0$, which implies that $\lim_{\epsilon \to 0^+} \Delta q_4=0$.
To bound the final term $\Delta q_5$, we also exploit that  $\xi_\epsilon'$ is bounded by one on  $[S_1-s_1/\lambda,S_1+\lambda/3]$ so that
\begin{eqnarray}
\Delta q_5
&=& 
\int_0^\infty \bigg \vert \int_{S_1} ^{S_{\epsilon,1}}  \kappa(S_{\epsilon,1}-\tau, y, \Lambda) \xi'_\epsilon(\tau) g_\epsilon(\xi_\epsilon(\tau))  \, \dd \tau \bigg \vert \, \dd y  \, . \nonumber \\
&\leq&
 \int_{S_1} ^{S_{\epsilon,1}} \left( \int_0^\infty  \kappa(S_{\epsilon,1}-\tau, y, \Lambda) \, \dd y  \right)  g_\epsilon(\xi_\epsilon(\tau))  \, \dd \tau  \, . \nonumber \\
 &\leq&
 \int_{S_1} ^{S_{\epsilon,1}}   g_\epsilon(\xi_\epsilon(\tau))  \, \dd \tau 
 \leq
A_\Lambda \Vert S_{\epsilon,1 }-S_1 \vert \xrightarrow[\epsilon \to 0^+]{} 0 \, . \nonumber
\end{eqnarray}
\end{proof}

\subsection{Continuity at blowup exit time}\label{sec:exit}

 \begin{proof}[Proof of Proposition \ref{prop:BlowupCont1}]
 The jump size $\pi_1$ is defined as the solution of  the equation $\mathcal{F}(\pi_1,q(S_1, \cdot))=0$, where $\mathcal{F}$ is the map
\begin{eqnarray}
\mathcal{F}[p,q] =  p - \int_{0^+}^\infty H(\lambda p, x)  q(x) \, \dd x   \, . \nonumber
\end{eqnarray}
By smoothness of $H(\cdot, x)$  and by linearity with respect to $q$, the mapping $\mathcal{F}$ is continuously Fr\'echet  differentiable on $\mathbbm{R} \times \mathcal{M}([0,\infty))$ where $\mathcal{M}([0,\infty))$  is equipped with the $L_1$ norm.
Moreover, under Assumption \ref{mainAssump}, we have
\begin{eqnarray}
\partial_p \mathcal{F}[\pi_1,q(S_1, \cdot)] = 1- \lambda \int_{0^+}^\infty h(\lambda \pi_1, x)  q(S_1, x) \, \dd x = 1 - \lambda g(U_1) \, ,  \nonumber
\end{eqnarray}
where $g(U_1)$ denotes the instantaneous flux at the blowup exit time $U_1=S_1+\lambda \pi_1$.
By Proposition \ref{prop:gBound}, under Assumption \ref{mainAssump}, we have $g(U_1)\leq 1/(4 \lambda)$ so that $\partial_p \mathcal{F}[\pi_1,q(S_1, \cdot)]>3/4$ is  bounded away from zero.
Therefore, by the Banach-space version of the implicit function theorem, there is a neighborhood of $q(S_1, \cdot)$ such that the mapping $q \mapsto \pi_1[q]$ is Fr\'echet differentiable.
Moreover, under Assumption \ref{mainAssump}, we necessarily have that $\pi_1[q]>1/2$ is bounded away from zero.
The latter observation allows one to exploit the continuity of the mapping $q \mapsto \pi_1[q]$ in a neighborhood $\mathcal{N}$ of $q_0(S_1, \cdot)$ to show the continuity of the map
\begin{eqnarray}
q \mapsto   \mathcal{G}[q]=\int_0^\infty\kappa(\lambda \pi_1[q], \cdot , x) q(x)\, \dd x \, , \nonumber
\end{eqnarray}
on that same neighborhood with respect to the $L_1$ norm on $\mathcal{M}([0,\infty))$.
To prove this point, consider $q_a$ and $q_b$ in $\mathcal{N}$ and introduce the shorthand notations $\pi_a=\pi_1[q_a]>1/2$ and $\pi_b=\pi_1[q_b]>1/2$. We have
\begin{eqnarray}
\Vert  \mathcal{G}[q_a] -  \mathcal{G}[q_b]\Vert_1 
&=& \int_0^\infty \bigg \vert \int_0^\infty \big(\kappa(\lambda \pi_a,y , x) q_a(x) - \kappa(\lambda \pi_b,y , x) \big) q_b(x)\, \dd x \bigg \vert \, \dd y \, ,  \nonumber\\
&\leq& \int_0^\infty \int_0^\infty  \kappa(\lambda \pi_a, y , x) \big \vert q_a(x) - q_b(x)  \big \vert \, \dd x \, \dd y  \nonumber\\
 && + \int_0^\infty \int_0^\infty \big \vert \kappa(\lambda \pi_a, y , x) - \kappa(\lambda \pi_b, y , x)) \big \vert q_b(x)\, \dd x \, \dd y \, ,  \nonumber\\
 &\leq& \int_0^\infty( 1- H(\lambda \pi_a, x) ) \big \vert q_a(x) - q_b(x)  \big \vert \, \dd x  \nonumber\\
 &&+ \int_0^\infty \int_0^\infty \left( \int_{\lambda \pi_a}^{\lambda \pi_b} \big \vert  \partial_\sigma \kappa(\sigma, y , x)  \big \vert \, \dd \sigma \right) q_b(x)\, \dd x \, \dd y \, , \ \nonumber \\
 &\leq&  \Vert q_a - q_b  \Vert_1  \,  \nonumber\\
  && \hspace{-10pt}+  \left(   \int_0^\infty    \left(  \int_0^\infty  \sup_{\lambda \pi_a \leq \sigma \leq \lambda \pi_b} \big \vert  \partial_\sigma \kappa(\sigma, y , x)  \big \vert  \, \dd y  \right)  q_b(x)\, \dd x \right)\lambda \vert \pi_b -\pi_a \vert   \nonumber \, .
\end{eqnarray}
As $0<1/2<\pi_a,\pi_b<1/2<1$, by Lemma \ref{lem:supbound}, there exists a finite upper bound
\begin{eqnarray}
M_{ab, \lambda} = \sup_{x>0} \left( \int_0^\infty \sup_{\lambda \pi_a \leq \sigma \leq \lambda \pi_b} \vert  \partial_\sigma \kappa(\sigma,y,x) \vert \, \dd y \right) < \infty \, .  \nonumber
\end{eqnarray}
For fixed $\lambda$, the desired continuity result follows from the continuity of the map $q \mapsto \pi_1[q]$ via
\begin{eqnarray}
\Vert  \mathcal{G}[q_a] -  \mathcal{G}[q_b]\Vert_1 
\leq
\Vert q_a - q_b  \Vert_1 + M_{ab} \lambda \big \vert \pi_1[q_a]- \pi_1[q_b] \big \vert \, . \nonumber
\end{eqnarray}
\end{proof}

\subsection{Continuity at blowup reset time}\label{sec:reset}

 \begin{proof}[Proof of Proposition \ref{prop:BlowupCont2}]
Under Assumption \ref{mainAssump}, the reset dPMF dynamics resumes after the blowup exit time $U_{\epsilon,1}$ until the next blowup time $S_{\epsilon,2}$.
By the well ordered property of blowups, the reset time $R_{\epsilon,1}$ is such that $R_{\epsilon,1}<S_{\epsilon,2}$ and the backward-delay function $\xi_\epsilon$ is smooth on $[U_{\epsilon,1},R_{\epsilon,1})$.
Thus, the distribution of surviving processes just before reset is given by
\begin{eqnarray}\label{eq:resetInt1}
q_\epsilon(R_{\epsilon,1}^-, y)
&=&
\int_0^\infty \kappa(R_{\epsilon,1}^- -U_{\epsilon,1},y,x) q_\epsilon(U_{\epsilon,1}, x) \, \dd x + \nonumber\\
&& \quad \int_{U_{\epsilon,1}}^{R_{\epsilon,1}^-} \kappa(R_{\epsilon,1}^--\sigma,y,\Lambda)  \xi'_\epsilon(\sigma) g_\epsilon(\xi_\epsilon(\sigma))\, \dd \sigma \, .
\end{eqnarray}

$(i)$ We first show that the $L_1$ norm of the second integral term in \eqref{eq:resetInt1} due to reset processes vanishes in the limit $\epsilon \to 0^+$.
By definition of the blowup trigger and reset times, when $\sigma \to R_{\epsilon,1}^-$, we have $\xi_\epsilon(\sigma) \to S_{\epsilon,1}^-$.
Moreover, the inverse time change $\Psi_\epsilon$ is differentiable at $S_{\epsilon,1}$ with derivative $\Psi_\epsilon'(S_{\epsilon,1}) \leq 3/(4 \nu)$, whereas the time change $\Psi_\epsilon$ becomes locally flat with $\Psi'_\epsilon(S_{\epsilon,1})=0$ and $\Psi''_\epsilon(S_{\epsilon,1}) \leq -b_\Lambda/(2 \nu)$.
The latter observations imply that for  $\sigma \leq R_{\epsilon,1}$, we have
\begin{eqnarray}
\Psi_\epsilon(\sigma) -\epsilon &=& \Psi_\epsilon(R_{\epsilon,1})-\epsilon+\Psi'_\epsilon(R_{\epsilon,1}) (\sigma-R_{\epsilon,1})^2+O\big( \sigma-R_{\epsilon,1})^2 \big) \, , \\
\Psi_\epsilon(\xi_\epsilon(\sigma)) &=& \Psi_\epsilon(S_{\epsilon,1})+\frac{\Psi''_\epsilon(S_{\epsilon,1})}{2} (\xi_\epsilon(\sigma)-S_{\epsilon,1})^2+O\big( (\xi_\epsilon(\sigma)-S_{\epsilon,1})^3 \big) \, .
 \end{eqnarray}
Using the definition $\xi_\epsilon(\sigma) = \Phi_\epsilon(\Psi_\epsilon(\sigma)-\epsilon)$ to equate the two quantities above leads to the asymptotic scaling
\begin{eqnarray}
\xi_\epsilon(\sigma) - S_{\epsilon,1} \sim  \sqrt{\frac{2 \Psi'_\epsilon(R_{\epsilon,1})(R_{\epsilon,1}-\sigma)}{\vert \Psi''_\epsilon(S_{\epsilon,1}) \vert}} \, ,
\end{eqnarray}
so that $\xi'_\epsilon$ is uniformly integrable over $[U_{\epsilon,1},R_{\epsilon,1})$.
By uniform boundedness of the instantaneous flux $g_{\epsilon} \leq A_\Lambda$, this implies that the $L_1$ norm of the second integral terms in \eqref{eq:resetInt1} vanishes when $\epsilon \to 0^+$:
\begin{eqnarray}
&&
\int_0^\infty  \bigg \vert \int_{U_{\epsilon,1}}^{R_{\epsilon,1}} \kappa(R_{\epsilon,1}-\sigma,y,\Lambda)  \xi'_\epsilon(\sigma) g_\epsilon(\xi_\epsilon(\sigma)) \, \dd \sigma \bigg \vert\, \dd y \, , \\
&& \hspace{30pt} \leq
 \int_{U_{\epsilon,1}}^{R_{\epsilon,1}} \left( \int_0^\infty \kappa(R_{\epsilon,1}-\sigma,y,\Lambda) \, \dd y  \right) \big \vert \xi'_\epsilon(\sigma)\big \vert g_\epsilon(\xi_\epsilon(\sigma)) \, \dd \sigma \, , \\
&& \hspace{30pt} \leq
A_\Lambda \int_{U_{\epsilon,1}}^{R_{\epsilon,1}}  \big \vert \xi'_\epsilon(\sigma)\big \vert  \, \dd \sigma 
\; = \;
O\big(\sqrt{R_{\epsilon,1}-U_{\epsilon,1}}\big)
\xrightarrow{\epsilon \to 0^+} 0 \, ,
\end{eqnarray}
which follows from the fact that $\vert R_{\epsilon,1}-U_{\epsilon,1} \vert \leq 2 \nu \epsilon$.

$(ii)$ To show the announced result, it is thus enough to restrict the analysis the first integral term in \eqref{eq:resetInt1} due to surviving processes at blowup exit times.
This term results from the same absorption dynamics as during a blowup episode starting at $S_{\epsilon,1}$, so that we have
\begin{eqnarray}
&&  \int_0^\infty \bigg \vert q_\epsilon(U_{\epsilon,1}, y) - \int_0^\infty \kappa(R_{\epsilon,1}-U_{\epsilon,1},y,x) q_\epsilon(U_{\epsilon,1}, x) \, \dd x  \bigg \vert \, \dd y \, , \\
&&  \hspace{30pt} =\int_0^\infty  \bigg \vert \int_0^\infty \kappa(U_{\epsilon,1}-S_{\epsilon,1},y,x) - \kappa(R_{\epsilon,1}-S_{\epsilon,1},y,x) q_\epsilon(S_{\epsilon,1}, x) \, \dd x  \bigg \vert \, \dd y \, , \\
&& \hspace{30pt} \leq  \int_0^\infty  \left( \int_0^\infty \sup_{U_{\epsilon,1} \leq \sigma \leq R_{\epsilon,1}} \vert \partial_\sigma \kappa(\sigma-S_{\epsilon,1},y,x) \vert \, \dd y  \right) \, \vert U_{\epsilon,1} -R_{\epsilon,1}\vert \, q_\epsilon(S_{\epsilon,1}, x) \, \dd x \, , \\
&& \hspace{30pt} \leq  M_{U_{\epsilon,1}-S_{\epsilon,1},R_{\epsilon,1}-S_{\epsilon,1}} \vert U_{\epsilon,1} -R_{\epsilon,1}\vert \, , 
\end{eqnarray}
where the upper bound from Lemma \ref{lem:supbound} satisfies $M_{U_{\epsilon,1}-S_{\epsilon,1},R_{\epsilon,1}-S_{\epsilon,1}} \leq M_{\lambda/2,\lambda+2 \nu \epsilon}$. The latter bound is a decreasing function of $\epsilon$, and thus uniformly bounded with respect to $\epsilon$, so that we have
\begin{eqnarray}
\lim_{\epsilon \to 0^+} \big \Vert q_\epsilon(R_{\epsilon,1}^-, \cdot) - q_\epsilon(U_{\epsilon,1}, \cdot) \big \Vert_1 = 0 \, .
\end{eqnarray}
As we  know that $q_\epsilon(R_{\epsilon,1}^-, \cdot)$ also converges in $L_1$ norm toward $q_\epsilon(U_1^-, \cdot)$ at blowup exit time, the triangular inequality implies that
\begin{eqnarray}
\lim_{\epsilon \to 0^+} \big \Vert q_\epsilon(R_{\epsilon,1}^-, \cdot) - q(U_1^-, \cdot) \big \Vert_1 = 0 \, .
\end{eqnarray}
Finally, splitting the delta Dirac contribution from the pre-reset continuous density part
\begin{eqnarray}
\big \Vert q_\epsilon(R_{\epsilon,1}, \cdot) - q(U_1, \cdot) \big \Vert_1
= \vert \pi_{\epsilon,1} - \pi_1 \vert  + \big \Vert q_\epsilon(R_{\epsilon,1}^-, \cdot) - q(U_1^-, \cdot) \big \Vert_1 \, , 
\end{eqnarray}
we conclude by remembering that we have $\lim_{\epsilon \to 0^+} \pi_{\epsilon,1} = \pi_1$.
\end{proof}

\section*{Acknowledgements}
The authors would like to thank Phillip Whitman and Luyan Yu for insightful discussions and  for simulation results.

The second author was supported by an Alfred P. Sloan Research Fellowship FG-2017-9554 and a CRCNS award DMS-2113213 from the National Science Foundation.


\begin{appendix}

\section{}\label{appA}

This appendix contains a lemma that is useful to control the $L_1$ norm of the density of surviving processes in the limit of vanishing refractory periods $\epsilon \to 0^+$.
It applies to integral terms bearing on reset processes. 

\begin{lemma}\label{lem:xiBound}
Under Assumption \ref{mainAssump}, for small enough refractory period $\epsilon$, we have $\xi'_\epsilon \leq 1$ on $[S_{\epsilon,1}-s_1/2,S_{\epsilon,1}+\lambda/2]$,  where $s_1>0$  only depends on $\Lambda$ and $\lambda$.
\end{lemma}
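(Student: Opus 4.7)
The plan is to split the interval at the blowup trigger time $S_{\epsilon,1}$ and handle each side by separate arguments. Throughout, I rely on the identity obtained by differentiating $\xi_\epsilon = \Phi_\epsilon \circ (\Psi_\epsilon - \epsilon \cdot \mathrm{id})$ in the region where $\Psi_\epsilon$ is smooth:
\begin{eqnarray*}
\xi'_\epsilon(\sigma) = \frac{\Psi'_\epsilon(\sigma)}{\Psi'_\epsilon(\xi_\epsilon(\sigma))} = \frac{1-\lambda g_\epsilon(\sigma)}{1-\lambda g_\epsilon(\xi_\epsilon(\sigma))} \, .
\end{eqnarray*}

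On the blowup side $[S_{\epsilon,1}, S_{\epsilon,1}+\lambda/2]$, Assumption \ref{mainAssump} guarantees $\pi_{\epsilon,1}>1/2$, so this interval is contained in $[S_{\epsilon,1},U_{\epsilon,1})$, where $\Psi_\epsilon$ is constant equal to $T_{\epsilon,1}$. Hence $\xi_\epsilon(\sigma)=\Phi_\epsilon(T_{\epsilon,1}-\epsilon)$ is constant on this interval; for small enough $\epsilon$ the time $T_{\epsilon,1}-\epsilon$ lies strictly before the first blowup, so $\Phi_\epsilon$ is continuous there and $\xi_\epsilon$ is unambiguously constant. Therefore $\xi'_\epsilon = 0 \leq 1$ on this half of the interval.

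On the pre-blowup side, I plan to establish that $g_\epsilon$ is non-decreasing on $[S_{\epsilon,1}-s_1, S_{\epsilon,1}]$ for a suitable $s_1$ depending only on $\Lambda,\lambda$. Differentiating the renewal equation \eqref{eq:gRenew} yields
\begin{eqnarray*}
\partial_\sigma g_\epsilon(\sigma) = \int_0^\infty \partial_\sigma h(\sigma, x) q_0(x) \, \dd x + \int_0^\sigma \partial_\sigma h(\sigma-\tau,\Lambda) \, \dd G_\epsilon(\xi_\epsilon(\tau)) \, .
\end{eqnarray*}
Since $\sigma \leq S_{\epsilon,1} \leq \sigma^\dagger < \sigma^\star$ (from the proof of Proposition \ref{prop:nextBlowup}) and since $\xi_\epsilon$ and $G_\epsilon$ are non-decreasing, the renewal integral has a non-negative integrand on the whole range of $\tau$, so the second term is $\geq 0$. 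In the initial-condition term, splitting $q_0 = \pi_0 \delta_\Lambda + p_0$ with $\|p_0\|_1 \leq 1-\pi_0 \leq C_\Lambda/\lambda$, the dominant piece $\pi_0 \partial_\sigma h(\sigma,\Lambda)$ is bounded below by a positive constant on any compact subinterval $[S_{\epsilon,1}-s_1, S_{\epsilon,1}]$ away from $0$, using the uniform lower bound on $S_{\epsilon,1}$ given by $s_1$ in \eqref{def:s1s2} (which depends only on $\Lambda, \lambda$). The correction from $p_0$ is $O(1/\lambda)$, dominated for $\lambda$ large as in Assumption \ref{mainAssump}.

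Given this monotonicity and the bound $\eta_\epsilon(\sigma) \leq \sqrt{\nu \epsilon}$ established in the proof of Proposition \ref{prop:convEps1}, for $\epsilon$ small enough that $\sqrt{\nu\epsilon}\leq s_1/2$ and $\sigma \in [S_{\epsilon,1}-s_1/2, S_{\epsilon,1})$, the image $\xi_\epsilon(\sigma)$ lies in $[S_{\epsilon,1}-s_1, S_{\epsilon,1})$, so $g_\epsilon(\xi_\epsilon(\sigma)) \leq g_\epsilon(\sigma) < 1/\lambda$, whence $\xi'_\epsilon(\sigma) \leq 1$. The main obstacle is ensuring monotonicity of $g_\epsilon$ in the interval rather than only at the single point $S_{\epsilon,1}$; this is overcome by the sign analysis of the renewal integrand on $[0,\sigma^\star)$ combined with the dominant convex-increasing behavior of $h(\cdot,\Lambda)$ near the Dirac reset mass, which makes the bound independent of $\epsilon$ and hence yields an $s_1$ depending only on $\Lambda$ and $\lambda$.
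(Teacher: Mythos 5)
Your treatment of the blowup side $[S_{\epsilon,1},S_{\epsilon,1}+\lambda/2]$ (flat $\Psi_\epsilon$, constant $\xi_\epsilon$, hence $\xi'_\epsilon=0$) is correct and coincides with the paper's step $(2)$. The problem is the pre-blowup side, where you replace the paper's quantitative ingredient---the lower bound $\partial_\sigma g_\epsilon\geq b_\Lambda/(2\lambda)$ of \eqref{eq:startP}, equivalently $\Psi''_\epsilon\leq -b_\Lambda/(2\nu)$, which the paper integrates twice against the relation $\Psi_\epsilon(\sigma)-\Psi_\epsilon(\xi_\epsilon(\sigma))=\epsilon$---by a purely qualitative monotonicity claim for $g_\epsilon$ together with the bound $\eta_\epsilon\leq\sqrt{\nu\epsilon}$ borrowed from the proof of Proposition \ref{prop:convEps1}. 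Both substitutions have genuine gaps.

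First, the domination argument for monotonicity fails near the left end of your interval. Since $S_{\epsilon,1}$ can be arbitrarily close to $s_1$, the points you must cover reach down to about $s_1/2$; by \eqref{def:s1s2} one has $h(s_1,\Lambda)=a_\Lambda/(\pi_0\lambda)$, so $h(s_1/2,\Lambda)$ is of order $\lambda^{-2}$ up to logarithmic factors, and hence $\pi_0\,\partial_\sigma h(\sigma,\Lambda)$ there is $O(\lambda^{-2}\log^2\lambda)$, far below the admissible perturbation $\big\vert\int_0^\infty \partial_\sigma h(\sigma,x)p_0(x)\,\dd x\big\vert\leq B_\Lambda C_\Lambda/\lambda$ coming from the non-Dirac mass (Proposition \ref{prop:dgBound}). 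So ``dominated for $\lambda$ large'' points the wrong way: enlarging $\lambda$ worsens the disparity, and sign-definiteness of $\partial_\sigma g_\epsilon$ on the needed interval does not follow from your splitting. Second, the bound $\eta_\epsilon\leq\sqrt{\nu\epsilon}$ is established in Proposition \ref{prop:convEps1} only on $[0,\tilde S_{\epsilon,1})$, where $g_\epsilon<(1-\sqrt{\nu\epsilon})/\lambda$; on the remaining stretch $[\tilde S_{\epsilon,1},S_{\epsilon,1})$, which belongs to the lemma's interval, you have no control of $\eta_\epsilon$, and mere monotonicity of $g_\epsilon$ cannot supply it: if $g_\epsilon$ hovered just below $1/\lambda$ over a stretch of length comparable to $s_1$, then $\eta_\epsilon$ would exceed $s_1/2$ there, $\xi_\epsilon(\sigma)$ would leave your monotonicity window, and the inequality $g_\epsilon(\xi_\epsilon(\sigma))\leq g_\epsilon(\sigma)$, hence $\xi'_\epsilon\leq 1$, is not secured. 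Excluding such near-plateaus is exactly what the quantitative bound \eqref{eq:startP} accomplishes: it yields $\Psi'_\epsilon(u)\geq\frac{b_\Lambda}{2\nu}(S_{\epsilon,1}-u)$ and, after a second integration, $S_{\epsilon,1}-\xi_\epsilon(\sigma)\leq\sqrt{4\nu\epsilon/b_\Lambda+(S_{\epsilon,1}-\sigma)^2}$, which controls $\xi_\epsilon$ all the way up to the trigger time. To close your argument you would need to reinstate a uniform positive lower bound on $\partial_\sigma g_\epsilon$ near $S_{\epsilon,1}$ (as in step $(iii)$ of the proof of Proposition \ref{prop:nextBlowup}), at which point you are essentially back to the paper's proof.
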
 

\begin{proof} 
Under Assumption \ref{mainAssump}, by $(iii)$ in the proof of Proposition \ref{prop:nextBlowup}, we have 
\begin{eqnarray}\label{eq:startP}
S_{\epsilon,1} - s_1 \leq \sigma \leq S_{\epsilon,1} \quad  \Rightarrow \quad \left(\partial_\sigma g_\epsilon(\sigma) \geq \frac{b_\Lambda}{2 \lambda} \quad \Leftrightarrow  \quad \Psi_\epsilon''(\sigma) \leq \frac{b_\Lambda}{2\nu} \right)   \, ,
\end{eqnarray}
where the constant $s_1$ defined by \eqref{def:s1s2} only depends on $\Lambda$ and $\lambda$.
Thus, for all $S_{\epsilon,1} - s_1 \leq \sigma \leq S_{\epsilon,1}$, we have
\begin{eqnarray}
\Psi_\epsilon'(\sigma) = \Psi_\epsilon'(\sigma) -\Psi_\epsilon'(S_{1,\epsilon})  = -\int_\sigma ^{S_{1,\epsilon}} \Psi_\epsilon''(\tau) \, \dd \tau \geq \frac{b_\Lambda}{2\nu } (S_{1,\epsilon}-\sigma) \, . \nonumber
\end{eqnarray}
In turn, integrating once more  for all $S_{\epsilon,1} - s_1 \leq \xi < \sigma \leq S_{\epsilon,1}$ yields
\begin{eqnarray}
\Psi_\epsilon(\sigma) - \Psi_\epsilon(\xi) \geq  \frac{b_\Lambda}{2\nu} \left( S_{\epsilon,1} (\sigma-\xi) + (\sigma^2-\xi^2)/2 \right) \, . \nonumber
\end{eqnarray}
The backward time $\xi(\sigma)$ is such that $ \Psi_\epsilon(\sigma)-\Psi_\epsilon(\xi(\sigma)) =\epsilon$, which implies that we necessarily have
\begin{eqnarray}
\epsilon \geq  \frac{b_\Lambda}{2\nu} \left( S_{\epsilon,1} (\sigma-\xi(\sigma)) + (\sigma^2-\xi(\sigma)^2)/2 \right) \, . \nonumber
\end{eqnarray}
The above inequality provides us with a lower bound on admissible value for $\xi(\sigma)$.
Solving for $\xi(\sigma)$ reveals that the inequality holds with $\xi(\sigma) \leq \sigma$ if and only if
\begin{eqnarray}
\xi(\sigma) \leq S_{1,\epsilon} - \sqrt{\frac{4 \nu \epsilon}{ b_\Lambda}+ (S_{\epsilon,1}-\sigma)^2} \, .\nonumber
\end{eqnarray}
This shows that for small enough $\epsilon$
\begin{eqnarray}
\sigma > S_{1,\epsilon}- \sqrt{s_1^2-\frac{4 \nu \epsilon}{ b_\Lambda}} \quad  \Rightarrow \quad S_{1,\epsilon}-\xi(\sigma)<s_1 \, .\nonumber
\end{eqnarray}
In particular, if $\epsilon < 3 s_1^2  b_\Lambda/(16\nu)$, we have $S_{\epsilon,1}-s_1/2\leq \sigma \leq S_{\epsilon,1}$ implies that $S_{\epsilon,1}-s_1 < \xi(\sigma)<\sigma$.
Then the result follows $(1)$ from remembering that by \eqref{eq:startP}, $\Psi_\epsilon'$ is increasing on $[S_{\epsilon,1}-s_1,S_{\epsilon,1}]$ so that:
\begin{eqnarray}
S_{\epsilon,1} - \frac{s_1}{2} \leq \sigma \leq S_{\epsilon,1} \quad  \Rightarrow \quad  \xi_\epsilon'(\sigma) = \frac{\Psi_\epsilon'(\sigma)}{\Psi'(\xi_\epsilon(\sigma))} \leq 1 \, , \nonumber
\end{eqnarray}
and $(2)$ from observing that under Assumption \ref{mainAssump}, the blowup size is such that $\pi_1 \geq 1/2$, so that $\Psi_\epsilon'(\sigma)=0$ and $\Psi_\epsilon'(\xi_\epsilon(\sigma))=\Psi_\epsilon'(\xi_\epsilon(S_{\epsilon,1}))>0$ for all $S_{\epsilon,1} \leq \sigma \leq S_{\epsilon,1} + \lambda/2$
\end{proof}

\section{}\label{appB}

This appendix contains another lemma that is useful to control the $L_1$ norm of the density of surviving processes in the limit of vanishing refractory periods $\epsilon \to 0^+$.
It applies to integral terms bearing on active processes.

\begin{lemma}\label{lem:supbound}
For all $0<\sigma_a<\sigma_b<\infty$, there exists a finite upper bound
\begin{eqnarray}
M_{ab} = \sup_{x>0} \left( \int_0^\infty  \sup_{\sigma_a \leq \sigma \leq \sigma_b} \vert  \partial_\sigma \kappa(\sigma,y,x) \vert \, \dd y \right) < \infty \, .
\nonumber
\end{eqnarray}
\end{lemma}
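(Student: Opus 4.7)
The plan is to bound $|\partial_\sigma\kappa(\sigma,y,x)|$ pointwise by an explicit envelope in $y$ whose $L^1(dy)$ norm is continuous in $\sigma$ and uniformly bounded in $x>0$, then to interchange the supremum over $\sigma \in [\sigma_a,\sigma_b]$ with the integral. Starting from the two-Gaussian form
\[
\kappa(\sigma,y,x) \;=\; \frac{e^{-(y-x+\sigma)^2/(2\sigma)}}{\sqrt{2\pi\sigma}} - \frac{e^{2x}\, e^{-(y+x+\sigma)^2/(2\sigma)}}{\sqrt{2\pi\sigma}},
\]
direct differentiation in $\sigma$ yields $\partial_\sigma\kappa$ as a sum of two terms, each of the form (quadratic polynomial in $(y,x,\sigma)$) times a Gaussian kernel, with the reflected term carrying the weight $e^{2x}$. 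Explicitly, the polynomial factors are $P_1 = (y-x)^2/(2\sigma^2) - 1/(2\sigma) - 1/2$ and $P_2 = (y+x)^2/(2\sigma^2) - 1/(2\sigma) - 1/2$.

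First I would handle the direct term: after the change of variable $z = y - x$, its $y$-integral over $(0,\infty)$ is dominated by the full-line Gaussian moment $\int_{\mathbbm{R}} (1 + z^2/\sigma^2)\, e^{-(z+\sigma)^2/(2\sigma)}\, dz$, which is continuous in $\sigma$ and independent of $x$. The reflected term I would treat via the completion-of-square identity $(y+x+\sigma)^2 = (y-x+\sigma)^2 + 4xy + 4x\sigma$, which gives $e^{2x}\, e^{-(y+x+\sigma)^2/(2\sigma)} = e^{-(y-x+\sigma)^2/(2\sigma)}\, e^{-2xy/\sigma}$. Equivalently, after the substitution $w = y + x + \sigma$ (so $w \geq x + \sigma$), the $x$-dependence concentrates in the prefactor $e^{2x - (x+\sigma)^2/(2\sigma)} = e^{-(x-\sigma)^2/(2\sigma)}$, and the remaining integral becomes a Gaussian-weighted moment in $w - x - \sigma$ whose polynomial-in-$x$ factors coming from $P_2$ are tamed by this super-exponentially decaying prefactor. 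The outcome is a continuous-in-$\sigma$ bound on the reflected term's $L^1(dy)$ norm that is uniform in $x > 0$.

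To pass from the fixed-$\sigma$ bound to the supremum, I would majorize both terms by explicit $\sigma$-independent envelopes obtained by taking worst-case values of the polynomial coefficients and Gaussian parameters over the compact interval $[\sigma_a, \sigma_b]$. The resulting envelope dominates $\sup_{\sigma \in [\sigma_a, \sigma_b]} |\partial_\sigma\kappa(\sigma,y,x)|$ pointwise in $y$ and has $L^1(dy)$ norm uniformly bounded in $x > 0$, yielding the claimed finite $M_{ab}$. The main obstacle is the apparent divergence of the $e^{2x}$ weight attached to the reflected Gaussian; the key identity $2x - (x+\sigma)^2/(2\sigma) = -(x-\sigma)^2/(2\sigma)$ resolves it by providing exactly the super-exponential decay in $x$ needed to absorb the quadratic growth of $P_2$.
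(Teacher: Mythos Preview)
Your proposal is correct and follows the same overall scheme as the paper: bound $|\partial_\sigma\kappa|$ pointwise by a Gaussian envelope in $y$ with polynomial prefactor, worst-case the $\sigma$-dependence over $[\sigma_a,\sigma_b]$, and integrate in $y$ to obtain a bound uniform in $x$.

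The only genuine difference lies in how the reflected Gaussian is tamed. You split $\kappa$ into two separate Gaussians, differentiate each, and then neutralize the apparently dangerous $e^{2x}$ weight on the reflected piece via the completion-of-square identity $2x-(x+\sigma)^2/(2\sigma)=-(x-\sigma)^2/(2\sigma)$, which supplies super-exponential decay in $x$ to absorb the polynomial $P_2$. The paper instead differentiates the factored form $\kappa=\frac{e^{-(y-x+\sigma)^2/(2\sigma)}}{\sqrt{2\pi\sigma}}\bigl(1-e^{-2xy/\sigma}\bigr)$ directly, so that the only dangerous cross term is $4xy\,e^{-2xy/\sigma}$, which is dispatched in one line by the elementary bound $ue^{-u}\le 1/e$ with $u=2xy/\sigma$. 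This yields the single envelope $e^{-(y-x+\sigma)^2/(2\sigma)}\bigl(\sigma(1+\sigma)+(y-x)^2+2\sigma/e\bigr)$, after which the sup over $\sigma$ and the $dy$-integral are immediate. The paper's route is slightly shorter and avoids having to track the $e^{2x}$ factor at all; your route is equally valid and makes the cancellation mechanism (completion of the square) more explicit.
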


\begin{proof}
Differentiating expression \eqref{eq:kappaDef} for $\kappa(\sigma,y,x)$  with respect to $\sigma$ yields:
\begin{eqnarray}
\vert  \partial_\sigma \kappa(\sigma,y,x) \vert 
&=&
 \frac{e^{-\frac{(y-x+\sigma)^2}{2 \sigma}}}{2 \sqrt{2 \pi \sigma^5}} \Big \vert  \sigma(1+\sigma)\left( 1-e^{-\frac{2 x y}{\sigma}}\right) + (y-x)^2-(y+x)^2 e^{-\frac{2 x y }{\sigma}} \Big \vert \, , \nonumber\\
 &\leq& 
 \frac{e^{-\frac{(y-x+\sigma)^2}{2 \sigma}}}{2 \sqrt{2 \pi \sigma^5}} \left(   \sigma(1+\sigma)+ \Big \vert (y-x)^2\left( 1-e^{-\frac{2 x y }{\sigma}}\right)-4xy e^{-\frac{2 x y }{\sigma}} \Big \vert \right) \, ,\nonumber \\
 &\leq& 
 \frac{e^{-\frac{(y-x+\sigma)^2}{2 \sigma}}}{2 \sqrt{2 \pi \sigma^5}} \left(  \sigma(1+\sigma)+  (y-x)^2  + 4xy e^{-\frac{2 x y }{\sigma}}  \right) \, , \nonumber \\
 &\leq& 
 \frac{e^{-\frac{(y-x)^2}{2 \sigma} -(y-x)-\frac{\sigma}{2}}}{2 \sqrt{2 \pi \sigma^5}} \left(  \sigma(1+\sigma)+  (y-x)^2  + 2\sigma/e  \right) \, ,  \nonumber
\end{eqnarray}
where the last inequality follows from the fact that $\Vert u e^{-u} \Vert_{0,\infty} \leq 1/e$.
From there, we have
\begin{eqnarray}
 \sup_{\pi_a \leq p \leq \pi_b} \big \vert  \partial_\sigma \kappa(\lambda p, y , x) \vert
 &\leq&
  \frac{e^{-\frac{(y-x)^2}{2 \sigma_b} -(y-x)-\frac{\sigma_a}{2}}}{2 \sqrt{2 \pi \sigma_a^5}} \left(  \sigma_b(1+2/e+\sigma_b)+  (y-x)^2   \right) \, ,  \nonumber\\
 &\leq&
 e^{-\frac{(y-x+\sigma_b)^2}{2 \sigma_b}}  \big(A_{ab}+B_{ab}(y-x)^2\big)\, , \nonumber
\end{eqnarray}
where we define the constants:
\begin{eqnarray}
A_{ab} =  \frac{e^{\frac{\sigma_b-\sigma_a}{2}}}{2 \sqrt{2 \pi \sigma_a^5}}   \sigma_b(1+2/e+\sigma_b)  \quad \mathrm{and} \quad
B_{ab} =  \frac{e^{\frac{\sigma_b-\sigma_a}{2}}}{2 \sqrt{2 \pi \sigma_a^5}} \, .  \nonumber
\end{eqnarray}
Integration with respect to $y$ then yields the finite upper bound:
\begin{eqnarray}
\int_0^\infty  \sup_{\sigma_a \leq \sigma \leq \sigma_b} && \!\!\! \vert  \partial_\sigma \kappa(\sigma,y,x) \vert \, \dd y \nonumber \\
&\leq&
M_{a,b} = A_{ab} \sqrt{\frac{\pi}{2}} \sigma_b + B_{ab}\left((1+\sigma_b) \sqrt{\frac{\pi}{2} \sigma_b^3}  + e^{\frac{-(x-\sigma_b)^2}{2\sigma_b}} \sigma_b (\sigma_b+x)\right)  \nonumber \, .
\end{eqnarray}
\end{proof}

\end{appendix}

\bibliographystyle{imsart-number}

\end{document}